\numberwithin{equation}{section}
\title{\bf A Partially Observed Stochastic Linear Stackelberg Differential Game with Poisson Jumps under Mean-Variance Criteria
\thanks{This work is financially supported by the National Natural Science Foundations of China (12471419, 12271304), and Shandong Provincial Natural Science Foundation (ZR2024ZD35).}}
\author{\normalsize Jingtao Lin\thanks{\it School of Mathematics, Shandong University, Jinan 250100, P.R. China, E-mail: linjingtao@mail.sdu.edu.cn},\quad Jingtao Shi\thanks{\it Corresponding author. School of Mathematics, Shandong University, Jinan 250100, P.R. China, E-mail: shijingtao@sdu.edu.cn}}
\newtheorem{proposition}{Proposition}[section]
\newtheorem{theorem}{Theorem}[section]
\newtheorem{lemma}{Lemma}[section]
\newtheorem{remark}{Remark}[section]
\newtheorem{assumption}{Assumption}[]
\begin{document}

\maketitle

\noindent{\bf Abstract:}\quad In this paper, a partially observed stochastic linear Stackelberg differential game with mean-variance criteria is studied. Randomness comes from Brownian motions and Poisson random measures. which leads to a circular dependency. We follow the orthogonal decomposition method to overcome the circular dependency of the control and state processes. Both original problems of the follower and leader are decomposed into several fully observed problems with mean-variance criteria. During these processes, non-linear stochastic filtering with Poisson random measures, developed in this paper, plays an important role. Besides the follower's problem is embedded into a class of auxiliary stochastic linear-quadratic optimal control problem of stochastic differential equations with Poisson jumps, the leader's problem is also embedded into a class of auxiliary stochastic linear-quadratic optimal control problem of forward-backward stochastic differential equations with Poisson jumps. Observable state feedback Stackelberg equilibria are obtained, via some Riccati equations.

\vspace{2mm}

\noindent{\bf Keywords:}\quad Stochastic linear Stackelberg differential game, Poisson jumps, stochastic linear-quadratic optimal control, partially observed, mean-variance criteria, orthogonal decomposition technique, stochastic filtering

\vspace{2mm}

\noindent{\bf Mathematics Subject Classification:}\quad 93E20, 60H10, 49K45, 49N70

\section{Introduction}\label{Sec Intro}

In recent years, the Stackelberg (also known as the leader-follower) game has become an active topic in the research of non-zero-sum games. The Stackelberg solution of the game is obtained when one player is forced to wait for the other player to announce his decision before making his own decision. Problems of this nature often arise in economics, where decisions must be made by two parties, and one party is subordinate to the other. As a result, the subordinate party must wait for the decision of the other party before formulating its own decision.

The research of Stackelberg game can be traced back to the pioneering work by von Stackelberg \cite{Stackelberg34} in static competitive economics. Based on other researchers' work, Yong \cite{Yong02} extended the {\it stochastic linear quadratic} (SLQ) Stackelberg differential game to a rather general framework, where the coeffcients could be random matrices, the control variables could enter the diffusion term of the state equation and the control weight matrices in the cost functionals need not to be positive definite. The problem is studied by the {\it stochastic maximum principle} (SMP) approach. The problem of the leader is described as an SLQ optimal control problem of a {\it forward-backward stochastic differential equation} (FBSDE). Moreover, it is shown that the open-loop solution admits a state feedback representation if a new stochastic Riccati equation is solvable.

Unlike the complete information settings assumed in previous literature, certain agents with asymmetrical roles often acquire only partial information. This is typically attributed to multiple factors. Information transmission delays can impede the timely arrival of data, while inequalities in market competition might limit an agent's access to comprehensive information. Additionally, the existence of private information among different classes and government imposed restrictive policies also contribute to this partial information situation. Stochastic Stackelberg differential games with partial and asymmetric information were studied in \cite{SWX16,SWX20,ZS22}.

Recently, without imposing additional requirements on the admissible control, Sun and Xiong \cite{SX23} studied an SLQ optimal control problem. Using the {\it orthogonal decomposition technique} of state process, with the help of linear filtering theory, \cite{SX23} devided the original cost functional into two sub-cost functionals. The first one is dependent on the control process and the filtering process, which is completely observed. The second one is independent of the choice of the admissible control. The orthogonal decomposition technique was extendedly used to the mean-field type stochastic systems \cite{MB24} and Stackelberg games \cite{LSZ24}.

Random jumps are often used to model sudden major events in financial markets. Lin \cite{Lin21} investigated the SLQ Stackelberg differential game with Poisson jumps. However, in their setting, the coefficients involved in the state equation and cost functional are deterministic matrices, and the control variable does not enter the jump term. Moon \cite{Moon21} generalized \cite{Lin21} to consider the SLQ Stackelberg differential game with Poisson random measure, under random coefficients and allowign the jump term of state equation affected by both follower's and leader's control variables. More recent developments can be founded in \cite{Moon25,WXZ24} and the references therein.

Stochastic Stackelberg differential games have a wide range of applications in the field of mathematical finance, management science and economy. Especially, the optimal reinsurance-investment problem has long been a popular topic in actuarial science. Chen and Shen \cite{CS19} investigated the Stackelberg differential reinsurance games under the mean-variance framework, in which an insurer and a reinsurer are considered as two players of the game. Recently, Huang and Zhu \cite{HZ25} studied the Stackelberg games with asymmetric information, where both investors care not only about their own terminal wealth, but also about its relative performance compared to the average terminal wealth of both palyers. More related works about stochastic Stackelberg differential games with mean-variance costs can be founded in \cite{LY22,GLS24,LL25,LY25,WLZ25}. However, in these works, Stackelberg equilibrium strategies and equilibrium value functions are achieved by {\it Hamilton-Jacobi-Bellman} (HJB) equations.

Motivated by the above work, in this paper, we study a partially observed stochastic linear Stackelberg differential game with mean-variance criteria. Randomness comes from Brownian motions and Poisson random measures, which makes our model closer to reality. Our work distinguishes itself from the existing literatures in the following aspects:

(1) The model and problem are new. A partially observed stochastic linear Stackelberg differential game with mean-variance criteria is considered. Both Brownian motions and Poisson random measures are involved in our setting.

(2) The system is complex and the research method is not usual. Since the state is partially observed, the linear and non-linear stochastic filtering with Poisson random measure is used. Following the orthogonal decomposition technique of \cite{SX23}, the original partially observed problems, for both the follower and leader's, are brought into two independent fully observed ones with mean-variance criteria, respectively, where the corresponding state processes come from stochastic filtering.

(3) To deal with the mean-variance criteria, following the embedding theorem developed in Zhou and Li \cite{ZL00}, we transform the problem into the SLQ framework.

(4) For the leader's problem, which is an SLQ optimal control problem of FBSDEPs, we extend the decoupling method in \cite{HJX23,LSZ24} to the Poisson jump setting.

(5) SMP is applied and state feedback representation of open-loop Stackelberg equilirria is obtain via some Riccati equations.

The rest of this paper is organized as follows. In Section \ref{Sec Problem formulation}, we formulate our partially observed stochastic linear Stackelberg differential game with mean-variance criteria, where the randomness comes from both Brownian motions and Poisson random measures. The follower's and leader's problems are studied in Section \ref{Sec follower's} and Section \ref{Sec leader's}, respectively, and open-loop Stackelberg equilibrium strategies and their state feedback representations are obtained. In Section \ref{Sec Conclu}, we give some concluding remarks. Some results about linear and non-linear stochastic filtering with Poisson random measures, are postponed in the Appendix \ref{Sec Filter}.

\section{Problem formulation}\label{Sec Problem formulation}

Throughout the paper, let $\mathbb{R}^n$ denote the \emph{n}-dimensional Euclidean space with standard Euclidean norm $\vert\cdot\vert $ and standard Euclidean inner product $\left\langle\cdot,\cdot\right\rangle $. The transpose of a vector (or matrix) $x$ is denoted by $\mathbf{\emph{x}}^\top$. $\mbox{Tr}(A)$ denotes the trace of a square matrix $A$. Let $\mathbb{R}^{n\times m}$ be the Hilbert space consisting of all $n\times m$-matrices with the inner product $\left\langle A,B\right\rangle := \mbox{Tr}(AB^\top$) and the Frobenius norm $\vert A \vert:=\langle A,A\rangle^\frac{1}{2}$. Denote the set of symmetric $n\times n$ matrices with real elements by $\mathbb{S}^n$. If $M\in\mathbb{S}^n$ is positive (semi-) definite, we write $M > (\geqslant) 0$.

Let $T>0$ be fixed. Consider a complete filtered probability space $(\Omega,\mathcal{F},\{\mathcal{F}_t\}_{0\leqslant t\leqslant T},\mathbb{P})$ on which two standard independent Brownian motions $W_1\equiv\{W_1(t)\in\mathbb{R};t\geqslant0\}$ and $W_2\equiv\{W_2(t)\in\mathbb{R};t\geqslant0\}$ are defined. Let $(E,\mathcal{B}(E))$ be a Polish space with the $\sigma$-finite measure $\nu_i$ on $E_i$ and $E_i\subset E$, for $i=1,2$. Suppose that $N_i(de,dt)$ are Poisson random measures on $(\mathbb{R}^+\times E_i,\mathcal{B}(\mathbb{R}^+)\times\mathcal{B}(E_i))$ under $\mathbb{P}$ and for any $A\in\mathcal{B}(E_i)$, $\nu_i(A)<\infty$, the compensated Poisson random measure is given by $\tilde{N}_i(de,dt)\coloneqq N_i(de,dt)-\nu_i(de)dt$. Moreover, $W_1,W_2,N_1,N_2$ are mutually independent under $\mathbb{P}$, and let $\mathcal{F}_t^{W_1},\mathcal{F}_t^{W_2},\mathcal{F}_t^{N_1},\mathcal{F}_t^{N_2}$ be the $\mathbb{P}$-completed natural filtrations generated by $W_1,W_2,N_1,N_2$, respectively. Set $\mathcal{F}_t\coloneqq\mathcal{F}_t^{W^1}\vee\mathcal{F}_t^{W_2}\vee\mathcal{F}_t^{N_1}\vee\mathcal{F}_t^{N_2}\vee\mathcal{N}$ and $\mathbb{F}:=\{\mathcal{F}_t\}_{0\leqslant t\leqslant T}$, where $\mathcal{N}$ denotes all $\mathbb{P}$-null sets. $\mathbb{E}$ is the expectation under $\mathbb{P}$.

In our Stackergerg game, there exists two players: one is the leader and the other is the follower. Let $u_i\equiv\{u_i(t)\in\mathbb{R}^m;0\leqslant t\leqslant T\}$, $i=1,2$ be the control processes of the follower and the leader, respectively. The state process $X^{u_1,u_2}\equiv\{X^{u_1,u_2}(t)\in\mathbb{R}^n;0\leqslant t\leqslant T\}$ evolves in line with the following linear {\it stochastic differential equation with Poisson random jumps} (SDEP):
\begin{equation}\label{state equation 2.1}
	\left\{
	\begin{aligned}
		dX^{u_1,u_2}(t)&=\left[A(t)X^{u_1,u_2}(t)+B_1(t)u_1(t)+B_2(t)u_2(t)\right]dt\\
		&\quad +\sum_{i=1}^2C_i(t)dW_i(t)+\sum_{i=1}^2\int_{E_i}D_i(t,e)\tilde{N}_i(de,dt),\\
		X^{u_1,u_2}(0)&=x_0,
	\end{aligned}
	\right.
\end{equation}
where $x_0\in\mathbb{R}^n$, the coefficients $A(\cdot)$, $B_1(\cdot)$, $B_2(\cdot)$, $C_1(\cdot)$, $C_2(\cdot)$, $D_1(\cdot,\cdot)$ and $D_2(\cdot,\cdot)$ are deterministic, bounded functions with values in $\mathbb{R}^{n\times n}$, $\mathbb{R}^{n\times m}$, $\mathbb{R}^{n\times m}$, $\mathbb{R}^n$, $\mathbb{R}^n$, $\mathbb{R}^n$ and $\mathbb{R}^n$, respectively.

In our problem, we suppose that the state process $X^{u_1,u_2}$ can not be observed directly by the two players. Instead, the follower can observe the process $Y_1^{u_1,u_2}\equiv\{Y_1^{u_1,u_2}(t)\in\mathbb{R};0\leqslant t\leqslant T\}$ and the leader's observed process is $Y_2^{u_1,u_2}\equiv\{Y_2^{u_1,u_2}(t)\in\mathbb{R};0\leqslant t\leqslant T\}$, which evolve according to the following linear SDEPs, respectively:
\begin{equation}\label{observed Y1 follower}
	\left\{
	\begin{aligned}
		dY^{u_1,u_2}_1(t)&=\left[H_1(t)X^{u_1,u_2}(t)+h_{11}(t)u_1(t)+h_{12}(t)u_2(t)\right]dt+K_1(t)dW_1(t)\\
		&\quad +\int_{E_1}f_1(t,e)\tilde{N}_1(de,dt),\\
		Y_1^{u_1,u_2}(0)&=0,
	\end{aligned}
	\right.
\end{equation}
\begin{equation}\label{observed Y2 leader}
	\left\{
	\begin{aligned}
		dY^{u_1,u_2}_2(t)&=\left[H_2(t)X^{u_1,u_2}(t)+h_2(t)u_2(t)\right]dt+K_2(t)dW_2(t)+\int_{E_2}f_2(t,e)\tilde{N}_2(de,dt),\\
		Y_2^{u_1,u_2}(0)&=0,
	\end{aligned}
	\right.
\end{equation}
where the deterministic coefficients $H_1(\cdot), H_2(\cdot)\in\mathbb{R}^{1\times n}$, $K_1(\cdot), K_2(\cdot)\in\mathbb{R}$, $h_{11}(\cdot), h_{12}(\cdot), h_2(\cdot)\in\mathbb{R}^{1\times m}$ and $f_1(\cdot,\cdot), f_2(\cdot,\cdot)\in\mathbb{R}$.

For $i=1,2$, let $\mathcal{Y}_i^{u_1,u_2}\equiv\{\mathcal{Y}_i^{u_1,u_2}(t);0\leqslant t\leqslant T\}$ be the usual augmentation of the filtration generated by $Y_i^{u_1,u_2}$. Clearly, the filtration $\mathcal{Y}_i^{u_1,u_2}$ depends on the choice of $u_1$ and $u_2$. We define the admissible control sets of the follower and the leader, respectively, as follows:
\begin{equation}\label{admissible control sets}
	\begin{aligned}
		\mathcal{U}^F_{ad}&\coloneqq\Bigl\{u_1:[0,T]\times\Omega\to\mathbb{R}^m\Big\vert u_1\mbox{ is }\mathcal{Y}_1^{u_1,u_2}\mbox{-adapted and } \mathbb{E}\int_{0}^{T}\vert u_1(t)\vert dt<\infty\Bigr\},\\
		\mathcal{U}^L_{ad}&\coloneqq\Bigl\{u_2:[0,T]\times\Omega\to\mathbb{R}^m\Big\vert u_2\mbox{ is }\mathcal{Y}_2^{u_1,u_2}\mbox{-adapted and } \mathbb{E}\int_{0}^{T}\vert u_2(t)\vert dt<\infty\Bigr\}.
	\end{aligned}
\end{equation}

In our game within the mean-variance framework, to measure the performance of player's controls $u_i$, we introduce the following cost functionals:
\begin{equation}\label{cost J1 J2 Sec2}
	\begin{aligned}
		J_1(u_1,u_2)&=\mathbb{E}\left[\int_0^T\frac{1}{2}\langle R_1(t)u_1(t),u_1(t)\rangle dt\right]+\mathbb{E}\big[X^{u_1,u_2}_T\big]+\frac{\theta_1}{2}\text{Var}\big[X^{u_1,u_2}_T\big],\\
		J_2(u_1,u_2)&=\mathbb{E}\left[\int_0^T\frac{1}{2}\langle R_2(t)u_2(t),u_2(t)\rangle dt\right]+\mathbb{E}\big[X^{u_1,u_2}_T\big]+\frac{\theta_2}{2}\text{Var}\big[X^{u_1,u_2}_T\big],
	\end{aligned}
\end{equation}
where $\text{Var}$ denoted the variance of a random variable, $\theta_i>0$ $(i=1,2)$ reflects the different degrees of risk aversion of the follower and leader.

\begin{remark}\label{risk averse degree}
	For a random variable $\xi$, consider its exponential utility $F_\theta[\xi]\coloneqq\frac{1}{\theta}\ln\mathbb{E}[e^{\theta\xi}]$, where $\theta$ is a constant. Performing the second-order Taylor expansion to $F_\theta[\xi]$ with respect to $\theta$ around $\theta=0$ gives
	\begin{equation*}
		F_{\theta}[\xi]=\mathbb{E}[\xi]+\frac{\theta}{2}\text{Var}[\xi]+o(\theta).
	\end{equation*}
	 Since $\text{Var}[\xi]$, the volatility, can be regarded as risk, to minimize $F_{\theta}[\xi]$, a decision maker processes the risk-averse (resp. risk-seeking) if $\theta>0$ (resp. $\theta<0$). That's to say, the stochastic Stackelberg differential game with mean-variance cost is a special version of risk-sensitive one. However, if we consider the general risk-sensitive cost functional, i.e.,
$$
J_i(u_1,u_2)=\mathbb{E}\left[\exp\left\{\frac{\theta_i}{2}\int_0^T\langle R_i(t)u_i(t),u_i(t)\rangle dt+\frac{1}{2}\langle\theta_iX^{u_1,u_2}_T,X^{u_1,u_2}_T\rangle\right\}\right],
$$
	 the corresponding risk-sensitive stochastic Stackelberg differential game is rather different and difficult. We do not with to consider this topic in this paper. More recent advances in risk-sensitive stochastic control problems and games can be founded in \cite{LS25,LS25+,MDB19} and references therein.
\end{remark}

Throughout this paper, we impose the following assumptions.
\begin{assumption}\label{assumption state}
	\begin{enumerate}[\bfseries (1)]
		\item $A(\cdot)\in L^\infty(0,T;\mathbb{R}^{n\times n})$, $B_i(\cdot)\in L^\infty(0,T;\mathbb{R}^{n\times m})$, $i=1,2$,\\
		$C_i(\cdot)\in L^\infty(0,T;\mathbb{R}^n)$, $i=1,2$, $D_i(\cdot,\cdot)\in L^\infty(E_i\times[0,T];\mathbb{R}^n)$, $i=1,2$;
		\item $H_i(\cdot)\in L^\infty(0,T;\mathbb{R}^{1\times n})$, $i=1,2$, $h_{11}(\cdot),h_{12}(\cdot),h_2(\cdot)\in L^\infty(0,T;\mathbb{R}^{1\times m})$,\\
        $K_i(\cdot)\in L^\infty(0,T;\mathbb{R})$, $i=1,2$, $f_i(\cdot)\in L^\infty(E_i\times[0,T];\mathbb{R})$, $i=1,2$;
		\item $R_i(\cdot)\in L^\infty(0,T;\mathbb{S}^m)$ and $R_i(\cdot)>0$, $\theta_i>0$, $i=1,2$;
     	\item For $i=1,2$, $K_i(t)$ are invertible and $K_i^{-1}(t)$ are bounded on $[0,T]$;
		\item For $i=1,2$, $f_i(t,e)$ are invertible and $f_i^{-1}(t,e)$ are bounded on $[0,T]\times E_i$.
	\end{enumerate}
\end{assumption}

\section{The follower's problem}\label{Sec follower's}

In this section, we consider the follower's problem, which is a partially observed stochastic linear optimal control problem with special mean-variance criteria. Inspired by \cite{SX23} and \cite{LSZ24}, for a fixed leader's control $u_2\in\mathcal{U}^L_{ad}$, and for any follower's control $u_1\in\mathcal{U}^F_{ad}$, define the following notations
\begin{equation}\label{filter of X wrt Y1}
	\hat{X}^{u_1,u_2}(t)\coloneqq\mathbb{E}\big[X^{u_1,u_2}(t)\big\vert\mathcal{Y}^{u_1,u_2}_1(t)\big],\quad\tilde{X}^{u_1,u_2}(t)\coloneqq X^{u_1,u_2}(t)-\hat{X}^{u_1,u_2}(t),
\end{equation}
where the state process $X^{u_1,u_2}$ satisfies \eqref{state equation 2.1}. As in Lemma \ref{lemma check V 3.1}, we define process $\check{V}^c_1\equiv\{\check{V}^c_1(t);0\leqslant t\leqslant T\}$ as
\begin{equation}
	d\check{V}_1^c(t)\coloneqq K^{-1}_1(t)\big[H_1(t)\tilde{X}^{u_1,u_2}(t)\big]dt+dW_1(t),
\end{equation}
which is a standard $\{\mathcal{Y}^{u_1,u_2}_1(t)\}$-Brownian motion in $\mathbb{R}^d$.

According to Lemma \ref{lemma nonlinear filtering equation}, we have the following result.
\begin{lemma}\label{lemma4.1}
	Let Assumption \ref{assumption state} hold. For a fixed $u_2\in\mathcal{U}^L_{ad}$, and for any $u_1\in\mathcal{U}^F_{ad}$, let $X^{u_1,u_2}$ be the corresponding state process satisfying \eqref{state equation 2.1}. Then $\hat{X}^{u_1,u_2}$ evolves according to
	\begin{equation}\label{state equation follower hat}
		\left\{
		\begin{aligned}
		d\hat{X}^{u_1,u_2}(t)&=\left[A(t)\hat{X}^{u_1,u_2}(t)+B_1(t)u_1(t)+B_2(t)u_2(t)\right]dt\\
		&\quad +\left[\Sigma(t)H_1(t)^\top K^{-1}_1(t)^\top+C_1(t)\right]d\check{V}_1^c(t)+\int_{E_1}D_1(t,e)\tilde{N}_1(de,dt),\\
		\hat{X}^{u_1,u_2}(0)&=x_0,
		\end{aligned}
		\right.
	\end{equation}
and $\tilde{X}^{u_1,u_2}$ satisfies
\begin{equation}\label{state equation follower tilde}
	\left\{
	\begin{aligned}
		d\tilde{X}^{u_1,u_2}(t)&=\left\{A(t)-\left[\Sigma(t)H_1(t)^\top K^{-1}_1(t)^\top+C_1(t)\right]K^{-1}_1(t)H_1(t)\right\}\tilde{X}^{u_1,u_2}(t)dt\\
		&\quad -\Sigma(t)H_1(t)^\top K^{-1}_1(t)^\top dW_1(t)+C_2(t)dW_2(t)+\int_{E_2}D_2(t,e)\tilde{N}_2(de,dt),\\
		\tilde{X}^{u_1,u_2}(0)&=0.
	\end{aligned}
	\right.
\end{equation}
Further, $\Sigma(t)\coloneqq\mathbb{E}\left[\big(\tilde{X}^{u_1,u_2}(t)\big)\big(\tilde{X}^{u_1,u_2}(t)\big)^\top\big\vert\mathcal{Y}^{u_1,u_2}_1(t)\right]$ is given by the following {\it stochastic differential equation} (SDE):
\begin{equation}\label{equation Sigma follower}
	\left\{
	\begin{aligned}
		d\Sigma(t)&=\bigg\{\Sigma(t)\left[A(t)-C_1(t)K^{-1}_1(t)H_1(t)\right]^\top+\left[A(t)-C_1(t)K^{-1}_1(t)H_1(t)\right]\Sigma(t)\\
		&\qquad -\Sigma(t)H_1(t)^\top K^{-1}_1(t)^\top K^{-1}_1(t)H_1(t)\Sigma(t)+C_2(t)C_2(t)^\top\\
        &\qquad +\int_{E_2}D_2(t,e)D_2(t,e)^\top\nu_2(de)\bigg\}dt\\
		&\quad +\mathbb{E}\left[\widetilde{\big(\tilde{X}\tilde{X}^\top\big)}\widetilde{\big(H_1(t)X(t)\big)}^\top\Big\vert\mathcal{Y}^{u_1,u_2}_1(t)\right]K^{-1}_1(t)^\top d\check{V}_1^c(t),\\
		\Sigma(0)&=0.
	\end{aligned}
	\right.
\end{equation}
\end{lemma}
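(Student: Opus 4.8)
The plan is to derive the filtering equations \eqref{state equation follower hat}, \eqref{state equation follower tilde}, and \eqref{equation Sigma follower} by specializing the general non-linear filtering result of the Appendix (Lemma \ref{lemma nonlinear filtering equation}) to the linear signal-observation pair \eqref{state equation 2.1}--\eqref{observed Y1 follower}. First I would cast \eqref{state equation 2.1} as the signal process and \eqref{observed Y1 follower} as the observation, noting that the observation noise is driven only by $W_1$ and $\widetilde N_1$, while the signal picks up additional independent noise from $W_2$ and $\widetilde N_2$; the diffusion and jump coefficients $C_i$, $D_i$ are deterministic, so the only genuinely unobserved randomness entering the conditional law is through $X$ itself via the drift $A(t)X$ and through the observation drift $H_1(t)X$. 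Applying the innovations approach, the key object is the innovations process $\check V_1^c$ already introduced before the lemma, which is a $\{\mathcal{Y}_1^{u_1,u_2}(t)\}$-Brownian motion; then the general filtering formula gives $d\hat X = \widehat{(\text{drift})}\,dt + (\text{gain})\,d\check V_1^c + (\text{jump part})$, where the gain is the sum of the "signal-observation correlation" term $C_1 K_1^{-1\top}$ and the "covariance" term $\Sigma H_1^\top K_1^{-1\top}$, and the jump part is $\int_{E_1} D_1(t,e)\widetilde N_1(de,dt)$ because $D_1$ is deterministic and $N_1$ is observable through $Y_1$. This yields \eqref{state equation follower hat}.

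Next, for $\tilde X = X - \hat X$, I would simply subtract \eqref{state equation follower hat} from \eqref{state equation 2.1}: the controlled drift terms $B_1 u_1 + B_2 u_2$ cancel, leaving the linear drift $A\tilde X$ minus the gain term acted on by $d\check V_1^c$; rewriting $d\check V_1^c = K_1^{-1}H_1\tilde X\,dt + dW_1$ and collecting, the $dt$-coefficient becomes $\{A - [\Sigma H_1^\top K_1^{-1\top} + C_1]K_1^{-1}H_1\}\tilde X$, the $dW_1$ coefficient becomes $-\Sigma H_1^\top K_1^{-1\top}$ (since $C_1\,dW_1$ from the state cancels the $C_1$ inside the gain after the $d\check V_1^c$ substitution), and the $W_2$, $\widetilde N_2$ terms survive untouched because they never appear in $\hat X$. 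The $\widetilde N_1$ terms cancel exactly. This gives \eqref{state equation follower tilde}. Then for $\Sigma(t) = \mathbb{E}[\tilde X\tilde X^\top \mid \mathcal{Y}_1^{u_1,u_2}(t)]$, I would apply Itô's formula for jump processes to $\tilde X\tilde X^\top$ using \eqref{state equation follower tilde}, take conditional expectation given $\mathcal{Y}_1^{u_1,u_2}$, and again invoke the general filtering lemma for the (matrix-valued) process $\tilde X\tilde X^\top$: the drift contributes the symmetrized Lyapunov terms with $A - C_1K_1^{-1}H_1$, the quadratic variation of $W_1$ contributes $\Sigma H_1^\top K_1^{-1\top}K_1^{-1}H_1\Sigma$ (with the correct sign from the Itô correction combined with the conditional-expectation correction producing the Riccati-type $-\Sigma H_1^\top K_1^{-1\top} K_1^{-1}H_1\Sigma$), the $W_2$ and $\widetilde N_2$ quadratic variations contribute $C_2C_2^\top + \int_{E_2}D_2D_2^\top \nu_2(de)$, and the martingale part is the conditional-covariance-innovations correction term $\mathbb{E}[\widetilde{(\tilde X\tilde X^\top)}\,\widetilde{(H_1 X)}^\top \mid \mathcal{Y}_1^{u_1,u_2}]K_1^{-1\top}d\check V_1^c$.

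The main obstacle I anticipate is handling the quadratic variation / Itô-correction bookkeeping for $\Sigma$ correctly: one must be careful that $\tilde X$ is driven both by $W_1$ (which is correlated with the innovations, since $\check V_1^c$ is built from $W_1$) and by the $\mathcal{Y}_1$-independent noises $W_2,\widetilde N_2$, and that taking conditional expectation of the Itô formula for $\tilde X\tilde X^\top$ is not the same as writing down the Lyapunov equation for $\mathbb{E}[\tilde X\tilde X^\top]$ — the extra $-\Sigma H_1^\top K_1^{-1\top}K_1^{-1}H_1\Sigma$ term arises precisely from the interaction between the innovations gain and the conditioning. I would isolate this by first writing the unconditional Itô differential of $\tilde X\tilde X^\top$, identifying which martingale increments are $\mathcal{Y}_1^{u_1,u_2}$-measurable after projection, and then applying Lemma \ref{lemma nonlinear filtering equation} termwise; the deterministic nature of $C_i,D_i$ is what keeps all the jump contributions explicit and closed-form rather than requiring higher conditional moments. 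The initial conditions $\hat X(0)=x_0$, $\tilde X(0)=0$, $\Sigma(0)=0$ are immediate from $Y_1(0)=0$ and $X(0)=x_0$ deterministic.
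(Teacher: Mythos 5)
Your plan reproduces the paper's own proof essentially step for step: \eqref{state equation follower hat} is obtained by applying the appendix filtering result (Lemma \ref{lemma nonlinear filtering equation}) with $u_1,u_2$ treated as observed, \eqref{state equation follower tilde} by subtracting the filter equation from \eqref{state equation 2.1} after substituting $d\check V_1^c=K_1^{-1}H_1\tilde X\,dt+dW_1$, and \eqref{equation Sigma follower} by applying It\^o's formula to $\tilde X\tilde X^\top$ and projecting once more with the same lemma, so the approach is correct and the same as the paper's. One minor wording slip: the gain in the filter equation is $\Sigma H_1^\top (K_1^{-1})^\top + C_1$, i.e.\ the $\hat\sigma_1=C_1$ summand carries no $(K_1^{-1})^\top$ factor (that factor multiplies only the covariance term), which is consistent with the equation you actually target.
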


\begin{remark}
	In our setting, $\{\Sigma(t);0\leqslant t\leqslant T\}$, the conditional covariance matrix of estimate error $\tilde{X}^{u_1,u_2}$, is random rather than deterministic. In classical continuous linear filtering, which also be known as Kalman-Bucy filtering, this covariance matrix satisfies an {\it ordinary differential equation} (ODE). Actually, if there is no Poisson random measure in the system, it is shown that the state process $X^{u_1,u_2}$ and the observation process $Y^{u_1,u_2}$ are jointly Guassian, which leads to the property that $\mathbb{E}\big[\big(\tilde{X}^{u_1,u_2}(t)\big)\big(\tilde{X}^{u_1,u_2}(t)\big)^\top\big\vert\mathcal{Y}^{u_1,u_2}_1(t)\big]=\mathbb{E}\big[\big(\tilde{X}^{u_1,u_2}(t)\big)\big(\tilde{X}^{u_1,u_2}(t)\big)^\top\big]$. See Chapter 7.5 in \cite{Situ05} for more details.
\end{remark}

\begin{proof}	
Noting that $u_1$ and $u_2$ are both $\mathcal{Y}^{u_1,u_2}_1$-adapted, the linear filter equation \eqref{state equation follower hat} follows immediately from Lemma \ref{lemma nonlinear filtering equation}. It is not hard to get \eqref{state equation follower tilde} since $\tilde{X}^{u_1,u_2}=X^{u_1,u_2}-\hat{X}^{u_1,u_2}$.

By It\^o's formula, we have
\begin{equation*}
	\begin{aligned}
        &d\big[\big(\tilde{X}^{u_1,u_2}(t)\big)\big(\tilde{X}^{u_1,u_2}(t)\big)^\top\big]\\
        &=\big(\tilde{X}^{u_1,u_2}(t)\big)\big(\tilde{X}^{u_1,u_2}(t)\big)^\top\left\{A(t)-\left[\Sigma(t)H_1(t)^\top K^{-1}_1(t)^\top+C_1(t)\right]K^{-1}_1(t)H_1(t)\right\}^\top dt\\
		&\quad -\tilde{X}^{u_1,u_2}(t)\big(dW_1(t)\big)^\top\left[\Sigma(t)H_1(t)^\top K^{-1}_1(t)^\top\right]^\top+\tilde{X}^{u_1,u_2}(t)\big(dW_2(t)\big)^\top C_2(t)^\top\\
		&\quad +\int_{E_2}\tilde{X}^{u_1,u_2}(t)D_1(t,e)^\top\tilde{N}_2(de,dt)\\
		&\quad +\left\{A(t)-\left[\Sigma(t)H_1(t)^\top K^{-1}_1(t)^\top+C_1(t)\right]K^{-1}_1(t)H_1(t)\right\}\big(\tilde{X}^{u_1,u_2}(t)\big)\big(\tilde{X}^{u_1,u_2}(t)\big)^\top dt\\
		&\quad -\left[\Sigma(t)H_1(t)^\top K^{-1}_1(t)^\top\right]dW_1(t)\big(\tilde{X}^{u_1,u_2}(t)\big)^\top+C_2(t)dW_2(t)\big(\tilde{X}^{u_1,u_2}(t)\big)^\top\\
		&\quad +\int_{E_2}D_2(t,e)\big(\tilde{X}^{u_1,u_2}(t)\big)^\top\tilde{N}_2(de,dt)\\
		&\quad +\left[\Sigma(t)H_1(t)^\top K^{-1}_1(t)^\top\right]dW_1(t)\big(dW_1(t)\big)^\top\left[\Sigma(t)H_1(t)^\top K^{-1}_1(t)^\top\right]^\top\\
		&\quad +C_2(t)dW_2(t)\big(dW_2(t)\big)^\top C_2(t)^\top+\int_{E_2}D_2(t,e)D_2(t,e)^\top N_2(de,dt).
	\end{aligned}
\end{equation*}
Again, using Lemma \ref{lemma nonlinear filtering equation}, we arrive at the non-linear filter equation \eqref{equation Sigma follower}.
\end{proof}

It is worthy to mention that $\tilde{X}^{u_1,u_2}$ is actually independent of controls $u_1$ and $u_2$. Further, for each $t\in[0,T]$, $\hat{X}^{u_1,u_2}(t)$ is the orthogonal projection of $X^{u_1,u_2}(t)$ onto $L^2(\Omega,\mathcal{Y}^{u_1,u_2}_1(t))$, the Hilbert space of $\mathcal{Y}^{u_1,u_2}_1(t)$-measurable, square-integrable random variables. In this case, $\tilde{X}^{u_1,u_2}(t)$ is independent of $\mathcal{Y}^{u_1,u_2}_1(t)$ and
\begin{equation}\label{orthogonal decomposition}
\begin{aligned}
	\mathbb{E}\big\vert X^{u_1,u_2}(t)\big\vert^2&=\mathbb{E}\big\vert \hat{X}^{u_1,u_2}(t)\big\vert^2+\mathbb{E}\big\vert \tilde{X}^{u_1,u_2}(t)\big\vert^2,\\ \text{Var}[X^{u_1,u_2}(t)]&=\text{Var}[\hat{X}^{u_1,u_2}(t)]+\text{Var}[\tilde{X}^{u_1,u_2}(t)],\quad \text{for all } t\in[0,T].
\end{aligned}
\end{equation}
Thus, the cost functional $J_1(u_1,u_2)$ in \eqref{cost J1 J2 Sec2} can be decomposed as follows
\begin{equation}\label{decom J_1}
	J_1(u_1,u_2)=\hat{J}_1(u_1,u_2)+\tilde{J}_1(u_1,u_2),
\end{equation}
where
\begin{equation}\label{hat J1 and tilde J1}
	\left\{
	\begin{aligned}
		\hat{J}_1(u_1,u_2)&\coloneqq\mathbb{E}\left[\int_0^T\frac{1}{2}\langle R_1(t)u_1(t),u_1(t)\rangle dt\right]+\mathbb{E}\big[\hat{X}^{u_1,u_2}(T)\big]+\frac{\theta_1}{2}\text{Var}\big[\hat{X}^{u_1,u_2}(T)\big],\\
		\tilde{J}_1(u_1,u_2)&\coloneqq\mathbb{E}\big[\tilde{X}^{u_1,u_2}(T)\big]+\frac{\theta_1}{2}\text{Var}\big[\tilde{X}^{u_1,u_2}(T)\big].
	\end{aligned}
	\right.
\end{equation}

Now, we have decomposed the follower problem, a partially observed stochastic optimal control problem, into two fully observed problems. Especially, $\hat{J}_1(u_1,u_2)$ is a mean-variance criteria and $\tilde{J}_1(u_1,u_2)$ is independent of controls $u_1$ and $u_2$.

Next, following the method of handling the mean-variance problem in \cite{ZL00}, we now transform the problem with respect to $\hat{J}_1(u_1,u_2)$ into an SLQ optimal control problem.
\begin{equation*}
\hspace{-2.5cm} \mathbf{P_F(\theta_1):}\ \mbox{For any given $u_2$, minimize $\hat{J}_1(u_1,u_2)$ over $u_1\in\mathcal{U}^F_{ad}$, subject to \eqref{state equation follower hat}}.
\end{equation*}

Define the optimal control set of problem $P_F(\theta_1)$ as:
\begin{equation*}
	\Pi_{P_F(\theta_1)}\coloneqq\bigl\{u_1\big\vert u_1 \mbox{ is an optimal control of }P_F(\theta_1)\bigr\}.
\end{equation*}

We now propose to embed problem $P_F(\theta_1)$ into a tractable auxiliary problem that turns out to be an SLQ optimal control problem. To do this, we need the following cost functional:
\begin{equation}\label{auxiliary cost functional}
	\hat{\mathcal{J}}_1(u_1,u_2)\coloneqq\mathbb{E}\left[\int_0^T\frac{1}{2}\langle R_1(t)u_1(t),u_1(t)\rangle dt+\lambda_1\hat{X}^{u_1,u_2}(T)+\frac{\theta_1}{2}|\hat{X}^{u_1,u_2}(T)|^2\right].
\end{equation}
Then the auxiliary problem $A_F(\theta_1,\lambda_1)$ is given by
\begin{equation*}
\hspace{-5mm} \mathbf{A_F(\theta_1,\lambda_1):}\ \mbox{For any given } u_2, \mbox{minimize }\hat{\mathcal{J}}_1(u_1,u_2)\mbox{ over }u_1\in\mathcal{U}^F_{ad}, \mbox{ subject to \eqref{state equation follower hat}}.
\end{equation*}
And define the optimal control set of problem $A_F(\theta_1,\lambda_1)$ as:
\begin{equation*}
	\Pi_{A_F(\theta_1,\lambda_1)}\coloneqq\bigl\{u_1\big\vert u_1 \mbox{ is an optimal control of }A_F(\theta_1,\lambda_1)\bigr\}.
\end{equation*}

The following result shows the relationship between problems $P_F(\theta_1)$ and $A_F(\theta_1,\lambda_1)$.
\begin{lemma}\label{embedding theorem}
{\bf (Embedding theorem of the follower)}
For any $\lambda_1>0$, one has
\begin{equation}\label{embedding result}
		\Pi_{P_F(\theta_1)}\subset\bigcup_{-\infty<\lambda_1<+\infty}\Pi_{A_F(\theta_1,\lambda_1)}.
\end{equation}
Moreover, if $\bar{u}_1\in\Pi_{P_F(\theta_1)}$, then $\bar{u}_1\in\Pi_{A_F(\theta_1,\bar{\lambda}_1)}$ with $\bar{\lambda}_1=1-\theta_1\mathbb{E}[\hat{X}^{\bar{u}_1,u_2}(T)]$, where $\hat{X}^{\bar{u}_1,u_2}$ is the corresponding trajectory.
\end{lemma}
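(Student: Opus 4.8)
The plan is to follow the standard embedding argument from Zhou and Li \cite{ZL00}, adapted to the present mean-variance setting. First I would observe that the cost functional $\hat{J}_1(u_1,u_2)$ in \eqref{hat J1 and tilde J1} differs from $\hat{\mathcal{J}}_1(u_1,u_2)$ in \eqref{auxiliary cost functional} only through the terminal terms: writing $\text{Var}[\hat{X}^{u_1,u_2}(T)]=\mathbb{E}|\hat{X}^{u_1,u_2}(T)|^2-\big(\mathbb{E}[\hat{X}^{u_1,u_2}(T)]\big)^2$, we get, for any real $\lambda_1$,
\begin{equation*}
\hat{\mathcal{J}}_1(u_1,u_2)=\hat{J}_1(u_1,u_2)+(\lambda_1-1)\mathbb{E}[\hat{X}^{u_1,u_2}(T)]+\frac{\theta_1}{2}\big(\mathbb{E}[\hat{X}^{u_1,u_2}(T)]\big)^2.
\end{equation*}
This is the key algebraic identity; everything else is a consequence of it.

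Next I would fix $u_2\in\mathcal{U}^L_{ad}$ and take $\bar{u}_1\in\Pi_{P_F(\theta_1)}$ with corresponding filtered trajectory $\hat{X}^{\bar{u}_1,u_2}$, and set $\bar{\lambda}_1:=1-\theta_1\mathbb{E}[\hat{X}^{\bar{u}_1,u_2}(T)]$. The goal is to show $\bar{u}_1\in\Pi_{A_F(\theta_1,\bar{\lambda}_1)}$, i.e. that $\bar{u}_1$ minimizes $\hat{\mathcal{J}}_1(\cdot,u_2)$. For this I would introduce the auxiliary scalar function $g(a):=\frac{\theta_1}{2}a^2+(\bar\lambda_1-1)a=\frac{\theta_1}{2}a^2-\theta_1\mathbb{E}[\hat{X}^{\bar{u}_1,u_2}(T)]\,a$ on $\mathbb{R}$, so that by the identity above $\hat{\mathcal{J}}_1(u_1,u_2)=\hat{J}_1(u_1,u_2)+g\big(\mathbb{E}[\hat{X}^{u_1,u_2}(T)]\big)$ for every admissible $u_1$. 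Since $\theta_1>0$, $g$ is strictly convex with a unique minimum at $a^\ast=\mathbb{E}[\hat{X}^{\bar{u}_1,u_2}(T)]$; hence $g\big(\mathbb{E}[\hat{X}^{u_1,u_2}(T)]\big)\geq g\big(\mathbb{E}[\hat{X}^{\bar{u}_1,u_2}(T)]\big)$ for all $u_1\in\mathcal{U}^F_{ad}$, with equality at $u_1=\bar{u}_1$. Combining this with $\hat{J}_1(u_1,u_2)\geq \hat{J}_1(\bar{u}_1,u_2)$ (which holds because $\bar{u}_1$ is optimal for $P_F(\theta_1)$) gives
\begin{equation*}
\hat{\mathcal{J}}_1(u_1,u_2)=\hat{J}_1(u_1,u_2)+g\big(\mathbb{E}[\hat{X}^{u_1,u_2}(T)]\big)\geq \hat{J}_1(\bar{u}_1,u_2)+g\big(\mathbb{E}[\hat{X}^{\bar{u}_1,u_2}(T)]\big)=\hat{\mathcal{J}}_1(\bar{u}_1,u_2),
\end{equation*}
so $\bar{u}_1$ is optimal for $A_F(\theta_1,\bar\lambda_1)$, which proves the ``Moreover'' part. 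The inclusion \eqref{embedding result} follows at once, since every $\bar u_1\in\Pi_{P_F(\theta_1)}$ lies in $\Pi_{A_F(\theta_1,\bar\lambda_1)}$ for the specific $\bar\lambda_1$ just constructed.

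The only subtlety worth flagging — the main ``obstacle,'' though it is mild — is making sure the constant $\bar\lambda_1$ is well defined and the manipulations are legitimate: one needs $\mathbb{E}[\hat{X}^{\bar{u}_1,u_2}(T)]$ to be finite, which follows from the linear dynamics \eqref{state equation follower hat}, the boundedness of the coefficients in Assumption \ref{assumption state}, and the integrability built into $\mathcal{U}^F_{ad}$; and one should note that although the statement restricts attention to $\lambda_1>0$ in the displayed union index in \eqref{embedding result} it is really over all real $\lambda_1$, and indeed $\bar\lambda_1=1-\theta_1\mathbb{E}[\hat{X}^{\bar{u}_1,u_2}(T)]$ need not be positive, so the union must be taken over $\lambda_1\in\mathbb{R}$ as written. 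Beyond these bookkeeping points the argument is a direct completion-of-the-square computation and requires no further machinery; in particular, no use of the stochastic maximum principle or Riccati equations is needed here, those entering only when one later solves $A_F(\theta_1,\lambda_1)$ explicitly.
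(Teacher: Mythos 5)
Your proof is correct and is essentially the paper's own argument: your identity $\hat{\mathcal{J}}_1(u_1,u_2)=\hat{J}_1(u_1,u_2)+g\big(\mathbb{E}[\hat{X}^{u_1,u_2}(T)]\big)$ with $g$ minimized at $\mathbb{E}[\hat{X}^{\bar{u}_1,u_2}(T)]$ is exactly the completion-of-square behind the paper's concavity inequality for $\pi(x,y,v)=\frac{\theta_1}{2}x-\frac{\theta_1}{2}y^2+y+v$, the only difference being that you argue directly while the paper argues by contradiction. Your side remarks (the union should be over all real $\lambda_1$, and the trajectory entering the costs is $\hat{X}^{u_1,u_2}$ rather than $X^{u_1,u_2}$) are consistent with, and indeed slightly cleaner than, the paper's write-up.
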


\begin{proof}
	We only need to prove the second part as the first one is a direct consequence of the second. Let $\bar{u}_1\in\Pi_{P_F(\theta_1)}$. If $\bar{u}_1\notin\Pi_{A_F(\theta_1,\bar{\lambda}_1)}$, then there exist $u_1$ and the corresponding $X^{u_1,u_2}$ such that
\begin{equation*}
	\begin{aligned}
		&\mathbb{E}\left[\frac{1}{2}\int_0^T\left[\langle R_1(t)u_1(t),u_1(t)\rangle-\langle R_1(t)\bar{u}_1(t),\bar{u}_1(t)\rangle\right] dt\right]
        +\frac{\theta_1}{2}\Big[\mathbb{E}|X^{u_1,u_2}(T)|^2-\mathbb{E}|X^{\bar{u}_1,u_2}(T)|^2\Big]\\
		&-\big(\theta_1\mathbb{E}\big[X^{\bar{u}_1,u_2}(T)\big]-1\big)\Big[\mathbb{E}\big[X^{u_1,u_2}(T)\big]-\mathbb{E}\big[X^{\bar{u}_1,u_2}(T)\big]\Big]\leqslant0.
	\end{aligned}		
\end{equation*}
Set a function $\pi(x,y,v)\coloneqq\frac{\theta_1}{2}x-\frac{\theta_1}{2}y^2+y+v$ which is concave in $(x,y,v)$. The cost functional of problem $P_F(\theta_1)$ can be represented as
\begin{equation*}
	\hat{J}_1(u_1,u_2)=\pi\left(\mathbb{E}|X^{u_1,u_2}(T)|^2,\mathbb{E}\big[X^{u_1,u_2}(T)\big],\mathbb{E}\left[\int_0^T\frac{1}{2}\langle R_1(t)u_1(t),u_1(t)\rangle dt\right]\right).
\end{equation*}
Noting $\frac{\partial \pi}{\partial x}(x,y,v)=\frac{\theta_1}{2}$, $\frac{\partial \pi}{\partial y}(x,y,v)=(1-\theta_1y)$ and $\frac{\partial \pi}{\partial v}(x,y,v)=1$, the concavity of $\pi$ gives
\begin{equation*}
	\begin{aligned}
		&\pi\left(\mathbb{E}|X^{u_1,u_2}(T)|^2,\mathbb{E}\big[X^{u_1,u_2}(T)\big],\mathbb{E}\left[\int_0^T\frac{1}{2}\langle R_1(t)u_1(t),u_1(t)\rangle dt\right]\right)\\
		&\leqslant \pi\left(\mathbb{E}|X^{\bar{u}_1,u_2}(T)|^2,\mathbb{E}\big[X^{\bar{u}_1,u_2}(T)\big],\mathbb{E}\left[\int_0^T\frac{1}{2}\langle R_1(t)\bar{u}_1(t),u_1(t)\rangle dt\right]\right)\\
		&\quad +\mathbb{E}\left[\frac{1}{2}\int_0^T\left[\langle R_1(t)u_1(t),u_1(t)\rangle-\langle R_1(t)\bar{u}_1(t),\bar{u}_1(t)\rangle\right] dt\right]\\
        &\quad +\frac{\theta_1}{2}\Big[\mathbb{E}|X^{u_1,u_2}(T)|^2-\mathbb{E}|X^{\bar{u}_1,u_2}(T)|^2\Big]\\
        &\quad -\big(\theta_1\mathbb{E}\big[X^{\bar{u}_1,u_2}(T)\big]-1\big)\Big[\mathbb{E}\big[X^{u_1,u_2}(T)\big]-\mathbb{E}\big[X^{\bar{u}_1,u_2}(T)\big]\Big]\\
		&\leqslant \pi\left(\mathbb{E}|X^{\bar{u}_1,u_2}(T)|^2,\mathbb{E}\big[X^{\bar{u}_1,u_2}(T)\big],\mathbb{E}\left[\int_0^T\frac{1}{2}\langle R_1(t)\bar{u}_1(t),\bar{u}_1(t)\rangle dt\right]\right),
	\end{aligned}
\end{equation*}
which is contradict to $\bar{u}_1\in\Pi_{P_F(\theta_1)}$.
\end{proof}

To deal with $\hat{\mathcal{J}}_1(u_1,u_2)$, we introduce a Riccati equation of $P(\cdot)\in\mathbb{R}^{n\times n}$:
\begin{equation}\label{equation P follower}
	\left\{
	\begin{aligned}
		&\dot{P}(t)+P(t)A(t)+A(t)^\top P(t)-P(t)^\top B_1(t)R^{-1}_1(t)B_1(t)^\top P(t)=0,\\
		&P(T)=\theta_1,
	\end{aligned}
	\right.
\end{equation}
and a {\it backward SDEP} (BSDEP) of $(\varphi^{u_2},\phi^{u_2},\psi^{u_2})\equiv\{\varphi^{u_2}(t)\in\mathbb{R}^n,\psi^{u_2}(t)\in\mathbb{R}^n,\psi^{u_2}(t,e)\in\mathbb{R}^n;0\leqslant t\leqslant T,e\in E_1\}$:
\begin{equation}\label{equation varphi follower}
	\left\{
	\begin{aligned}
		-d\varphi^{u_2}(t)&=\left\{\left[A(t)^\top-P(t)B_1(t)R_1^{-1}(t)B_1(t)^\top\right]\varphi^{u_2}(t)+P(t)B_2(t)u_2(t)\right\}dt\\
		&\quad -\phi^{u_2}(t)d\check{V}^c(t)-\int_{E_1}\psi^{u_2}(t,e)\tilde{N}_1(de,dt),\\
		\varphi^{u_2}(T)&=\lambda_1.
	\end{aligned}
	\right.
\end{equation}

The following result is standard.
\begin{lemma}
	Let Assumption \ref{assumption state} hold. For a fixed $u_2\in\mathcal{U}^L_{ad}$, suppose \eqref{equation P follower} and \eqref{equation varphi follower} admit adapted solutions $P(\cdot)$ and $(\varphi^{u_2},\phi^{u_2},\psi^{u_2})$, respectively. Then the follower's problem is solvable with the optimal control $\bar{u}_1$ given by
	\begin{equation}\label{optimal control of follower u1}
		\bar{u}_1(t)=-R_1^{-1}(t)B_1(t)^\top\left[P(t)\hat{X}^{\bar{u}_1,u_2}(t)+\varphi^{u_2}(t)\right],\quad t\in[0,T],
	\end{equation}
where $\hat{X}^{\bar{u}_1,u_2}$ satisfies
	\begin{equation}\label{filter equation of X wrt Y1}
		\left\{
		\begin{aligned}
		d\hat{X}^{\bar{u}_1,u_2}(t)&=\left[A(t)\hat{X}^{\bar{u}_1,u_2}(t)-B_1(t)R_1^{-1}(t)B_1(t)^\top\left[P(t)\hat{X}^{\bar{u}_1,u_2}(t)+\varphi^{u_2}(t)\right]+B_2(t)u_2(t)\right]dt\\
		&\quad +\left[\Sigma(t)H_1(t)^\top K^{-1}_1(t)^\top+C_1(t)\right]d\check{V}_1^c(t)+\int_{E_1}D_1(t,e)\tilde{N}_1(de,dt),\\
		\hat{X}^{\bar{u}_1,u_2}(0)&=x_0,
		\end{aligned}
		\right.
	\end{equation}
\end{lemma}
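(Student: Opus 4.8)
The plan is to prove the lemma by the classical completion-of-squares (verification) argument for the auxiliary SLQ problem $A_F(\theta_1,\lambda_1)$. Indeed, by the decomposition \eqref{hat J1 and tilde J1}, in which $\tilde J_1$ does not involve $u_1$, and by the embedding Lemma \ref{embedding theorem}, solving the follower's problem comes down to exhibiting, for the given $u_2$ and the given terminal value $\lambda_1=\varphi^{u_2}(T)$, an optimal control of $A_F(\theta_1,\lambda_1)$ in feedback form. Throughout I will use the facts recorded after Lemma \ref{lemma4.1}: the error process $\tilde X^{u_1,u_2}$ and the conditional covariance $\Sigma$ --- hence the coefficient $\sigma(t):=\Sigma(t)H_1(t)^\top K_1^{-1}(t)^\top+C_1(t)$ in \eqref{state equation follower hat} --- do not depend on $u_1$, and the solution $(\varphi^{u_2},\phi^{u_2},\psi^{u_2})$ of \eqref{equation varphi follower} depends only on $u_2$; I will also use that $P$ is symmetric, since the terminal value in \eqref{equation P follower} is symmetric and the matrix Riccati equation propagates symmetry.

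The main step is to apply It\^o's formula for jump processes to $t\mapsto\tfrac12\langle P(t)\hat X^{u_1,u_2}(t),\hat X^{u_1,u_2}(t)\rangle+\langle\varphi^{u_2}(t),\hat X^{u_1,u_2}(t)\rangle$ along an arbitrary $u_1\in\mathcal U^F_{ad}$, where $\hat X^{u_1,u_2}$ solves \eqref{state equation follower hat}, then to integrate over $[0,T]$ and take expectations. The $\check V_1^c$- and $\tilde N_1$-stochastic integrals vanish in the mean, and the It\^o/compensator corrections they leave --- namely $\tfrac12\langle P\sigma,\sigma\rangle$, $\tfrac12\int_{E_1}\langle PD_1,D_1\rangle\nu_1(de)$, $\langle\phi^{u_2},\sigma\rangle$ and $\int_{E_1}\langle\psi^{u_2},D_1\rangle\nu_1(de)$ --- are all independent of $u_1$; the terminal value is exactly $\tfrac{\theta_1}{2}|\hat X^{u_1,u_2}(T)|^2+\lambda_1\hat X^{u_1,u_2}(T)$ because $P(T)=\theta_1$ and $\varphi^{u_2}(T)=\lambda_1$. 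Gathering the drift terms, one cancels the $\hat X$-quadratic part by the Riccati equation \eqref{equation P follower}, the $\hat X$-linear part in $\varphi^{u_2}$ by the BSDEP \eqref{equation varphi follower} together with the symmetry of $P$, and the $u_2$-terms linear in $\hat X$ by $\langle P\hat X,B_2u_2\rangle=\langle PB_2u_2,\hat X\rangle$; what remains is
\begin{equation*}
\hat{\mathcal{J}}_1(u_1,u_2)=\tfrac12\langle P(0)x_0,x_0\rangle+\langle\varphi^{u_2}(0),x_0\rangle+c(u_2)+\tfrac12\,\mathbb{E}\int_0^T\big\langle R_1(t)\big(u_1(t)-\bar u_1(t)\big),\,u_1(t)-\bar u_1(t)\big\rangle\,dt ,
\end{equation*}
with $\bar u_1(t)=-R_1^{-1}(t)B_1(t)^\top[P(t)\hat X^{u_1,u_2}(t)+\varphi^{u_2}(t)]$ and $c(u_2)$ collecting the remaining $u_1$-free terms. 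Since $R_1>0$, the last term is nonnegative and vanishes exactly when $u_1=\bar u_1$, $dt\otimes d\mathbb{P}$-a.e.; substituting this identity turns the feedback relation into the closed-loop system \eqref{filter equation of X wrt Y1}, and $\bar u_1$ is the unique optimal control of $A_F(\theta_1,\lambda_1)$.

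Two points remain. First, \eqref{filter equation of X wrt Y1} is a linear SDEP with bounded coefficients (boundedness of $P$ being the continuity on $[0,T]$ of the assumed solution of \eqref{equation P follower}) and a square-integrable inhomogeneity built from $\varphi^{u_2}$ and $u_2$, hence it has a unique square-integrable solution adapted to the filtration generated by $\check V_1^c$ and $N_1$; by the innovations identification furnished by the filtering results of Appendix \ref{Sec Filter}, this filtration is $\mathcal Y_1^{\bar u_1,u_2}$, so $\bar u_1\in\mathcal U^F_{ad}$. Second, to recover the follower's problem $P_F(\theta_1)$ from $A_F(\theta_1,\lambda_1)$ one applies Lemma \ref{embedding theorem} and fixes $\lambda_1$ via $\lambda_1=1-\theta_1\mathbb{E}[\hat X^{\bar u_1,u_2}(T)]$, which is solvable because $\varphi^{u_2}$, and hence $\hat X^{\bar u_1,u_2}$, is affine in $\lambda_1$. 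I expect the completion of squares and the handling of the Poisson-jump compensator terms to be routine; the one genuinely delicate point is this admissibility/innovations step --- the control-dependence of the observation filtration, i.e.\ the circular dependency highlighted in the introduction --- which is taken care of by the Appendix.
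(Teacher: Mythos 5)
Your proposal follows essentially the same route as the paper: the paper likewise reduces the follower's problem to the auxiliary problem via the decomposition \eqref{decom J_1} and the embedding Lemma \ref{embedding theorem}, applies It\^o's formula to $\tfrac12\langle P\hat X^{u_1,u_2},\hat X^{u_1,u_2}\rangle$ and $\langle\varphi^{u_2},\hat X^{u_1,u_2}\rangle$, and completes the square using \eqref{equation P follower}, \eqref{equation varphi follower} and $R_1>0$ to get $\hat{\mathcal J}_1(u_1,u_2)\geqslant\hat{\mathcal J}_1(\bar u_1,u_2)$. Your extra admissibility/innovations step goes beyond what the paper's proof addresses, but be aware that the Appendix only yields the inclusion $\sigma\{V(s);s\leqslant t\}\subset\mathcal Y_t$ rather than the identification of the observation filtration with the one generated by $\check V_1^c$ and $N_1$ that you invoke.
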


\begin{proof}
Since $\tilde{J}_1(u_1,u_2)$ defined in \eqref{decom J_1} is independent of $u_1$, the optimal control $\bar{u}_1$ of the follower's problem is also optimal to $P_F(\theta_1)$, as well as the auxiliary problem $A_F(\theta_1,\bar{\lambda}_1)$ with $\bar{\lambda}_1=1-\theta_1\mathbb{E}[\hat{X}^{\bar{u}_1,u_2}(T)]$.
	
By applying It\^o's formula to $\frac{1}{2}\langle P(\cdot)\hat{X}^{u_1,u_2}(\cdot),\hat{X}^{u_1,u_2}(\cdot)\rangle$ and $\langle\varphi(\cdot),\hat{X}^{u_1,u_2}(\cdot)\rangle$ and inserting them into $\hat{\mathcal{J}}_1(u_1,u_2)$, we obtain that
\begin{equation*}
	\begin{aligned}
		&\hat{\mathcal{J}}_1(u_1,u_2)=\frac{1}{2}\big\langle P(0)\hat{X}^{u_1,u_2}(0),\hat{X}^{u_1,u_2}(0)\big\rangle +\big\langle\varphi^{u_2}(0),\hat{X}^{u_1,u_2}(0)\big\rangle\\
		&\quad +\mathbb{E}\bigg[\int_0^T\bigg\{\frac{1}{2}\left\langle R_1\left[u_1+R_1^{-1}B_1^\top\big(P\hat{X}^{u_1,u_2}+\varphi^{u_2}\big)\right],
          \left[u_1+R_1^{-1}B_1^\top\big(P\hat{X}^{u_1,u_2}+\varphi^{u_2}\big)\right]\right\rangle\\
		&\quad -\frac{1}{2}\big\langle R_1^{-1}B_1^\top\varphi^{u_2},B_1^\top\varphi^{u_2}\big\rangle +\frac{1}{2}\left\langle P\big(\Sigma H_1^\top (K_1^{-1})^\top+C_1\big),\Sigma H_1^\top (K_1^{-1})^\top+C_1\right\rangle \\
		&\quad +\big\langle\varphi^{u_2},B_2u_2\big\rangle +\big\langle\phi^{u_2},\Sigma H_1^\top (K_1^{-1})^\top+C_1\big\rangle +\int_{E_1}\big\langle PD_1+\psi^{u_2},D_1\big\rangle\nu_1(de)\bigg\}dt\bigg]\\
        &\geqslant \frac{1}{2}\big\langle P(0)\hat{X}^{u_1,u_2}(0),\hat{X}^{u_1,u_2}(0)\big\rangle +\big\langle\varphi^{u_2}(0),\hat{X}^{u_1,u_2}(0)\big\rangle\\
		&\quad +\mathbb{E}\bigg[\int_0^T\bigg\{-\frac{1}{2}\big\langle R_1^{-1}B_1^\top\varphi^{u_2},B_1^\top\varphi^{u_2}\big\rangle
         +\frac{1}{2}\left\langle P\big(\Sigma H_1^\top (K_1^{-1})^\top+C_1\big),\Sigma H_1^\top (K_1^{-1})^\top+C_1\right\rangle\\
		&\quad +\big\langle\varphi^{u_2},B_2u_2\big\rangle +\big\langle\phi^{u_2},\Sigma H_1^\top (K_1^{-1})^\top+C_1\big\rangle +\int_{E_1}\big\langle PD_1+\psi^{u_2},D_1\big\rangle\nu_1(de)\bigg\}dt\bigg]\\
		&=\hat{\mathcal{J}}_1(\bar{u}_1,u_2),
	\end{aligned}
\end{equation*}
which implies that $\bar{u}_1$ is optimal.
\end{proof}

\section{The leader's problem}\label{Sec leader's}

In this section, we are going to study the stochastic optimal control problem of the leader. Knowing that the follower would take her/his optimal control $\bar{u}_1$ in \eqref{optimal control of follower u1}, the leader's state process $X^{u_2}\coloneqq X^{\bar{u}_1,u_2}$ will depends on $\hat{X}^{u_2}\coloneqq\hat{X}^{\bar{u}_1,u_2}$ and $\varphi^{u_2}\coloneqq\varphi^{\bar{u}_1,u_2}$. Thus, the state equation of the leader becomes an {\it FBSDE with Poisson random measure} (FBSDEP).

Especially, in this section, we need to set $H_1(\cdot)\equiv0$ to obtain some explicit results. The general case remains open. Now, the state equation of the leader's problem becomes
\begin{equation}\label{state equation leader}
	\left\{
	\begin{aligned}
		dX^{u_2}(t)=&\left[A(t)X^{u_2}(t)-B_1(t)R_1(t)^{-1}B_1(t)^\top\big(P(t)\hat{X}^{u_2}(t)+\varphi^{u_2}(t)\big)+B_2(t)u_2(t)\right]dt\\
		&+\sum_{i=1}^2C_i(t)dW_i(t)+\sum_{i=1}^2\int_{E_i}D_i(t,e)\tilde{N}_i(de,dt),\\
		-d\varphi^{u_2}(t)=&\left\{\left[A(t)^\top-P(t)B_1(t)R_1^{-1}(t)B_1(t)^\top\right]\varphi^{u_2}(t)+P(t)B_2(t)u_2(t)\right\}dt\\
		&-\phi^{u_2}(t)dW_1(t)-\int_{E_1}\psi^{u_2}(t,e)\tilde{N}_1(de,dt),\\
		X^{u_2}(0)=&\ x_0,\quad\varphi^{u_2}(T)=\bar{\lambda}_1,
	\end{aligned}
	\right.
\end{equation}
where $\hat{X}^{u_2}$ satisfied \eqref{filter equation of X wrt Y1}.

\subsection{Filtering equations}

In this subsection, we first derive some filtering equations by some lemmas in the appendix.

Let $\mathcal{Y}^{u_2}_2\coloneqq \mathcal{Y}^{\bar{u}_1,u_2}_2$ be the filtration generated by the leader's observation $Y^{u_2}_2\coloneqq Y^{\bar{u}_1,u_2}_2$. We also assume that $\mathcal{Y}^{u_2}_2(t)\subset\mathcal{Y}^{\bar{u}_1,u_2}_1(t)$ for any $t\in[0,T]$ in this paper. For any $u_2\in\mathcal{U}^L_{ad}$, set
\begin{equation*}
	\begin{aligned}
		&\check{X}^{u_2}(t)\coloneqq\mathbb{E}\big[X^{u_2}(t)\big\vert\mathcal{Y}^{u_2}_2(t)\big],\quad
        \tilde{\tilde{X}}^{u_2}(t)\coloneqq X^{u_2}(t)-\check{X}^{u_2}(t),\quad
        \check{\varphi}^{u_2}(t)\coloneqq\mathbb{E}\big[\varphi^{u_2}(t)\big\vert\mathcal{Y}^{u_2}_2(t)\big],\\	
        &\tilde{\tilde{\varphi}}^{u_2}(t)\coloneqq\varphi^{u_2}(t)-\check{\varphi}^{u_2}(t),\quad
        \check{\phi}^{u_2}(t)\coloneqq\mathbb{E}\big[\phi^{u_2}(t)\big\vert\mathcal{Y}^{u_2}_2(t)\big],\quad
        \check{\psi}^{u_2}(t,e)\coloneqq\mathbb{E}\big[\psi^{u_2}(t,e)\big\vert\mathcal{Y}^{u_2}_2(t)\big].
	\end{aligned}
\end{equation*}

Introduce the innovation process
\begin{equation}
	U(t)\coloneqq Y^{u_2}_2(t)-\int_0^t\big[H_2(s)\check{X}^{u_2}(s)+h_2(s)u_2(s)\big]ds
\end{equation}
and the standard ${\mathcal{Y}^{u_2}_2(t)}$-Brownian motion
\begin{equation}
	\check{U}^c(t)\coloneqq\int_0^tK^{-1}_2(s)H_2(s)\tilde{\tilde{X}}^{u_2}(s)ds+\int_0^tdW_2(s).
\end{equation}

The following result is an application of Lemma \ref{lemma nonlinear filtering equation}.

\begin{lemma}
Let Assumption \ref{assumption state} hold. For any $u_2\in\mathcal{U}^L_{ad}$, $(\check{X}^{u_2},\check{\varphi}^{u_2})$, the filtering of state process $(X^{u_2},\varphi^{u_2})$ corresponding to the leader's observation process $Y^{u_2}_2$, evolves according to	
\begin{equation}\label{state equation leader check X}
	\left\{
	\begin{aligned}
		d\check{X}^{u_2}(t)=&\Big\{\big[A(t)-B_1(t)R_1^{-1}(t)B_1(t)^\top P(t)\big]\check{X}^{u_2}(t)-B_1(t)R_1^{-1}(t)B_1(t)^\top\check{\varphi}^{u_2}(t)\\
        &\quad +B_2(t)u_2(t)\Big\}dt +\Big\{\Xi_1(t)H_2(t)^\top K^{-1}_2(t)^\top+C_2(t)\Big\}d\check{U}^c(t)\\
		&+\int_{E_2}D_2(t,e)\tilde{N}_2(de,dt),\\
		\check{X}^{u_2}(0)=&\ x_0,
	\end{aligned}
	\right.
\end{equation}
where $\Xi_1(t)\coloneqq\mathbb{E}\Big[\tilde{\tilde{X}}^{u_2}(t)\Big(\tilde{\tilde{X}}^{u_2}(t)\Big)^\top\Big\vert\mathcal{Y}^{u_2}_2(t)\Big]$, and
\begin{equation}\label{state equation leader check varphi}
	\left\{
	\begin{aligned}
		-d\check{\varphi}^{u_2}(t)=&\left\{\big[A(t)^\top-P(t)B_1(t)R_1^{-1}(t)B_1(t)^\top\big]\check{\varphi}^{u_2}(t)+P(t)B_2(t)u_2(t)\right\}dt\\
		&+\Xi_2(t)H_2(t)^\top K^{-1}_2(t)^\top d\check{U}^c(t),\\
		\check{\varphi}^{u_2}(T)=&\ \bar{\lambda}_1,
	\end{aligned}
	\right.
\end{equation}
where $\Xi_2(t)\coloneqq\mathbb{E}\Big[\tilde{\tilde{\varphi}}^{u_2}(t)\Big(\tilde{\tilde{X}}^{u_2}(t)\Big)^\top\Big\vert\mathcal{Y}^{u_2}_2(t)\Big]$.
\end{lemma}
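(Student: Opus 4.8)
The plan is to derive the filtering equations \eqref{state equation leader check X} and \eqref{state equation leader check varphi} as a direct application of the non-linear filtering result in the Appendix (Lemma \ref{lemma nonlinear filtering equation}) to the forward-backward system \eqref{state equation leader}. The crucial structural observation is that, since we impose $H_1(\cdot)\equiv0$, the filter $\hat{X}^{u_2}$ appearing in \eqref{state equation leader} is driven by $\check{V}_1^c$ which in this case reduces (by \eqref{state equation follower tilde}, where the $H_1$-terms vanish) so that $\hat{X}^{u_2}$ decouples and, moreover, the signal process $(X^{u_2},\varphi^{u_2})$ becomes linear with coefficients that are $\mathcal{Y}_2^{u_2}$-adapted whenever $u_2$ is; this is what makes the conditional expectation propagate through a closed system. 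First I would rewrite \eqref{state equation leader} so that the drift of $X^{u_2}$ is expressed in terms of $X^{u_2}$ and $\varphi^{u_2}$ themselves rather than $\hat{X}^{u_2}$: under $H_1\equiv0$ one has $\hat{X}^{u_2}=\mathbb{E}[X^{u_2}\vert\mathcal{Y}_1^{u_2}]$ and $\mathcal{Y}_2^{u_2}(t)\subset\mathcal{Y}_1^{u_2}(t)$, so conditioning on $\mathcal{Y}_2^{u_2}(t)$ and using the tower property turns $\mathbb{E}[\hat{X}^{u_2}(t)\vert\mathcal{Y}_2^{u_2}(t)]=\mathbb{E}[X^{u_2}(t)\vert\mathcal{Y}_2^{u_2}(t)]=\check{X}^{u_2}(t)$.

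Next I would apply Lemma \ref{lemma nonlinear filtering equation} to the forward equation for $X^{u_2}$. The observation \eqref{observed Y2 leader} has $K_2(\cdot)$ invertible with bounded inverse (Assumption \ref{assumption state}), so the innovations approach applies: the filter gains are the conditional covariance between the signal and the observation noise, giving the term $\big[\Xi_1(t)H_2(t)^\top K_2^{-1}(t)^\top+C_2(t)\big]d\check U^c(t)$, where $\Xi_1(t)=\mathbb{E}[\tilde{\tilde X}^{u_2}(t)(\tilde{\tilde X}^{u_2}(t))^\top\vert\mathcal{Y}_2^{u_2}(t)]$, exactly as in the derivation of \eqref{state equation follower hat} in Lemma \ref{lemma4.1}. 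The Poisson part $\int_{E_1}D_1\tilde N_1$ is unobserved through $Y_2$ (which carries only $W_2$ and $N_2$), so its filtered contribution vanishes in drift (it is already a compensated martingale) while $\int_{E_2}D_2\tilde N_2$ passes through unchanged because $N_2$ is $\mathcal{Y}_2^{u_2}$-adapted — this gives the surviving jump term in \eqref{state equation leader check X}. For the backward equation for $\varphi^{u_2}$, the same lemma applied to the (time-reversed) dynamics yields \eqref{state equation leader check varphi}: the generator is linear in $\varphi^{u_2}$ and affine in $u_2$, so conditioning commutes with it, the $\phi^{u_2}dW_1$ and $\int_{E_1}\psi^{u_2}\tilde N_1$ terms are martingale increments that are orthogonal to $\mathcal{Y}_2^{u_2}$ and hence contribute no drift, and the innovation correction term $\Xi_2(t)H_2(t)^\top K_2^{-1}(t)^\top d\check U^c(t)$ with $\Xi_2(t)=\mathbb{E}[\tilde{\tilde\varphi}^{u_2}(t)(\tilde{\tilde X}^{u_2}(t))^\top\vert\mathcal{Y}_2^{u_2}(t)]$ appears because the backward martingale integrand correlates with the forward observation noise.

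The main obstacle I anticipate is the careful bookkeeping of which driving noises are ``observable'' through $Y_2^{u_2}$ and which are not, together with verifying that the filtering lemma in the Appendix actually covers the backward (FBSDEP) setting and the nested-filtration situation $\mathcal{Y}_2^{u_2}\subset\mathcal{Y}_1^{u_2}$, rather than only the plain forward SDEP case. One must also confirm the admissibility/integrability hypotheses of Lemma \ref{lemma nonlinear filtering equation} hold for $(X^{u_2},\varphi^{u_2},\phi^{u_2},\psi^{u_2})$ — this follows from standard estimates for linear FBSDEPs under Assumption \ref{assumption state} and $u_2\in\mathcal{U}^L_{ad}$ — and that $\Xi_1,\Xi_2$ are well-defined finite-variance processes. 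Once the lemma is invoked with these checks in place, the stated equations follow by matching drift and martingale parts exactly as in the proof of Lemma \ref{lemma4.1}; I would present the forward computation in detail and then note that the backward case is entirely analogous up to the time reversal and the terminal condition $\check\varphi^{u_2}(T)=\bar\lambda_1$, which is deterministic and hence equals its own conditional expectation.
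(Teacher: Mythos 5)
Your proposal is correct and follows essentially the same route as the paper: the paper presents this lemma as a direct application of the appendix filtering result (Lemma \ref{lemma nonlinear filtering equation}) to the forward and backward components of \eqref{state equation leader}, exactly as you do, including the use of the nested-filtration assumption $\mathcal{Y}_2^{u_2}(t)\subset\mathcal{Y}_1^{\bar u_1,u_2}(t)$ and the tower property to replace $\mathbb{E}[\hat X^{u_2}(t)\vert\mathcal{Y}_2^{u_2}(t)]$ by $\check X^{u_2}(t)$. Your bookkeeping of which noises enter the leader's observation (only $W_2$ and $N_2$, so the gain is $\Xi_1H_2^\top K_2^{-\top}+C_2$ for the forward part, $\Xi_2H_2^\top K_2^{-\top}$ for the backward part, and only the $D_2\tilde N_2$ jump survives) matches the paper's intended argument.
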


\begin{remark}
As in Section \ref{Sec follower's}, we will transform the leader's problem into a fully observed SLQ optimal control problem, and \eqref{state equation leader check varphi} is one of the state equation. If we do not set $H_1(\cdot)=0$, then, noting $d\check{V}^c=K^{-1}_1H_1\tilde{X}^{u_2}dt+dW_1$, a special term $\mathbb{E}\big[\phi^{u_2}(t)K^{-1}_1(t)H_1(t)\tilde{X}^{u_2}(t)\big\vert \mathcal{Y}^{u_2}_2(t)\big]$ will appear in \eqref{state equation leader check varphi}, which will pose great difficulties for leader's problem. As a result, when it comes to leader's problem, we need the assumption that $H_1(\cdot)=0$. The general case is challenging.
\end{remark}

\begin{lemma}
Let Assumption \ref{assumption state} hold. For any $u_2\in\mathcal{U}^L_{ad}$, the difference $\big(\tilde{\tilde{X}}^{u_2}(\cdot),\tilde{\tilde{\varphi}}^{u_2}(\cdot)\big)$ evolves according to
\begin{equation}\label{equation tilde tilde X}
	\left\{
	\begin{aligned}
		d\tilde{\tilde{X}}^{u_2}(t)=&\bigg\{\Big[A(t)-\Big[\Xi_1(t)H_2(t)^\top K^{-1}_2(t)^\top+C_2(t)\Big]K^{-1}_2(t)H_2(t)\Big]\tilde{\tilde{X}}^{u_2}(t)\\
		&\ -B_1(t)R_1^{-1}(t)B_1(t)^\top P(t)\big(\hat{X}^{u_2}(t)-\check{X}^{u_2}_t\big)-B_1(t)R_1^{-1}(t)B_1(t)^\top\tilde{\tilde{\varphi}}^{u_2}(t)\bigg\}dt\\
		&+C_1(t)dW_1(t)-\Big[\Xi_1(t)H_2(t)^\top K^{-1}_2(t)^\top+C_2(t)\Big]dW_2(t)+\int_{E_1}D_1(t,e)\tilde{N}_1(de,dt),\\
		\tilde{\tilde{X}}_0=&\ 0,
	\end{aligned}
	\right.
\end{equation}
and
\begin{equation}\label{equation tilde tilde variphi}
	\left\{
	\begin{aligned}
		-d\tilde{\tilde{\varphi}}^{u_2}(t)=&\bigg\{\Big[A(t)^\top-P(t)B_1(t)R_1^{-1}(t)B_1(t)^\top\Big]\tilde{\tilde{\varphi}}^{u_2}(t)\\
		&\ +\Xi_2(t)H_2(t)^\top K^{-1}_2(t)^\top K^{-1}_2(t)H_2(t)\tilde{\tilde{X}}^{u_2}_t\bigg\}dt\\
		&-\phi^{u_2}(t)dW_1(t)+\Xi_2(t)H_2(t)^\top K^{-1}_2(t)^\top dW_2(t)-\int_{E_1}\psi^{u_2}(t,e)\tilde{N}_1(de,dt),\\
		\tilde{\tilde{\varphi}}^{u_2}(T)=&\ 0.
	\end{aligned}
	\right.
\end{equation}
\end{lemma}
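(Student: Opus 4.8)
The plan is to mimic the proof of Lemma \ref{lemma4.1}: by definition $\tilde{\tilde{X}}^{u_2}=X^{u_2}-\check{X}^{u_2}$ and $\tilde{\tilde{\varphi}}^{u_2}=\varphi^{u_2}-\check{\varphi}^{u_2}$, so their dynamics are obtained simply by subtracting the filtering equations \eqref{state equation leader check X}--\eqref{state equation leader check varphi} from the FBSDEP \eqref{state equation leader}. No fresh appeal to the nonlinear filtering lemma (and, unlike the $\Sigma$-equation in Lemma \ref{lemma4.1}, no It\^o product expansion) is required; the whole argument is linear bookkeeping, with one substitution that must be carried out consistently.

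First I would handle $\tilde{\tilde{X}}^{u_2}$. Subtracting the forward equation of \eqref{state equation leader} from \eqref{state equation leader check X}, the $B_2(t)u_2(t)$ terms cancel, the $A(t)$ and $-B_1R_1^{-1}B_1^\top P$ contributions recombine into $A(t)\tilde{\tilde{X}}^{u_2}(t)-B_1R_1^{-1}B_1^\top P(t)\big(\hat{X}^{u_2}(t)-\check{X}^{u_2}(t)\big)$ (this does \emph{not} collapse further, since $\hat{X}^{u_2}$ is the $\mathcal{Y}_1^{u_2}$-filter while $\check{X}^{u_2}$ is the $\mathcal{Y}_2^{u_2}$-filter), and the $-B_1R_1^{-1}B_1^\top$ term acting on $\varphi^{u_2}-\check\varphi^{u_2}$ becomes $-B_1R_1^{-1}B_1^\top\tilde{\tilde{\varphi}}^{u_2}(t)$. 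For the martingale part, the Poisson terms cancel except $\int_{E_1}D_1(t,e)\tilde{N}_1(de,dt)$, while the Brownian part $\sum_i C_i\,dW_i-[\Xi_1 H_2^\top (K_2^{-1})^\top+C_2]\,d\check{U}^c$ is rewritten using $d\check{U}^c(t)=K_2^{-1}(t)H_2(t)\tilde{\tilde{X}}^{u_2}(t)\,dt+dW_2(t)$, which produces the drift correction $-[\Xi_1 H_2^\top (K_2^{-1})^\top+C_2]K_2^{-1}H_2\,\tilde{\tilde{X}}^{u_2}$ and leaves $C_1\,dW_1-\Xi_1 H_2^\top (K_2^{-1})^\top\,dW_2$. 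With $\tilde{\tilde{X}}^{u_2}(0)=x_0-x_0=0$ this is exactly \eqref{equation tilde tilde X}.

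Next, for $\tilde{\tilde{\varphi}}^{u_2}$ I would subtract \eqref{state equation leader check varphi} from the backward equation of \eqref{state equation leader}. The $PB_2u_2$ terms cancel and the remaining drift recombines into $[A^\top-PB_1R_1^{-1}B_1^\top]\tilde{\tilde{\varphi}}^{u_2}$; the $dW_1$- and $\tilde{N}_1$-driven terms of $\varphi^{u_2}$ survive untouched because, under the standing assumption $H_1\equiv 0$, $\mathcal{Y}_2^{u_2}$ is driven only by $(W_2,N_2)$ and is thus independent of $(W_1,N_1)$, so these martingale parts contribute nothing to $\check{\varphi}^{u_2}$ (hence the integrands in \eqref{equation tilde tilde variphi} are exactly $-\phi^{u_2}$ and $-\psi^{u_2}$); finally the $\Xi_2 H_2^\top (K_2^{-1})^\top\,d\check{U}^c$ term is split via the same identity $d\check{U}^c=K_2^{-1}H_2\tilde{\tilde{X}}^{u_2}\,dt+dW_2$ into its drift and $dW_2$ components, and $\tilde{\tilde{\varphi}}^{u_2}(T)=\bar{\lambda}_1-\bar{\lambda}_1=0$, giving \eqref{equation tilde tilde variphi}.

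I do not expect a genuine obstacle, since both assertions come from differencing linear equations already in hand; the only points requiring care are (i) performing the innovation substitution $d\check{U}^c=K_2^{-1}H_2\tilde{\tilde{X}}^{u_2}\,dt+dW_2$ consistently in both equations, and (ii) justifying, from $H_1\equiv 0$ and the explicit form of $Y_2^{u_2}$, that the $W_1$- and $N_1$-martingale integrands pass through $\mathbb{E}[\cdot\mid\mathcal{Y}_2^{u_2}(t)]$ unchanged. One should also keep in mind the nested-filtration subtlety already flagged in the remarks: $\hat{X}^{u_2}-\check{X}^{u_2}$ cannot be replaced by $\tilde{\tilde{X}}^{u_2}$, and this residual term is genuinely present in \eqref{equation tilde tilde X}, which is why a further filtering step with respect to $\mathcal{Y}_2^{u_2}$ will be needed afterward.
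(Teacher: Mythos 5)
Your route---subtracting the filter equations \eqref{state equation leader check X}--\eqref{state equation leader check varphi} from \eqref{state equation leader} and rewriting $d\check{U}^c(t)=K_2^{-1}(t)H_2(t)\tilde{\tilde{X}}^{u_2}(t)dt+dW_2(t)$---is exactly the argument the paper intends; it is the leader-side analogue of the proof of Lemma \ref{lemma4.1}, and the linear bookkeeping you describe is essentially the whole proof. One justification you give is wrong as stated, though: $\mathcal{Y}_2^{u_2}$ is \emph{not} independent of $(W_1,N_1)$, even with $H_1\equiv0$, because the drift of $Y_2^{u_2}$ contains $H_2X^{u_2}$ and $X^{u_2}$ is driven by $W_1$ and $N_1$. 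The correct (and simpler) reason the $\phi^{u_2}\,dW_1$ and $\psi^{u_2}\,\tilde{N}_1$ terms survive intact is that the filter equation \eqref{state equation leader check varphi}, already obtained from Lemma \ref{lemma nonlinear filtering equation}, carries no $W_1$- or $N_1$-driven martingale part at all, so the subtraction removes nothing from them; the hypothesis $H_1\equiv0$ is only needed to rule out the extra term $\mathbb{E}\big[\phi^{u_2}K_1^{-1}H_1\tilde{X}^{u_2}\big\vert\mathcal{Y}_2^{u_2}\big]$ flagged in the paper's remark, not to create independence.

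You should also not assert that the computation lands ``exactly'' on the displayed equations, because it does not, and the discrepancies deserve comment. Your (correct) subtraction yields the $dW_2$-coefficient $-\Xi_1(t)H_2(t)^\top K_2^{-1}(t)^\top$ in the forward error equation, whereas \eqref{equation tilde tilde X} displays $-\big[\Xi_1(t)H_2(t)^\top K_2^{-1}(t)^\top+C_2(t)\big]$; comparison with the follower analogue \eqref{state equation follower tilde} shows the extra $C_2$ is a slip in the display, but a careful proof should say so rather than claim exact agreement. Similarly, subtracting \eqref{state equation leader check varphi} \emph{as printed} (with $+\Xi_2H_2^\top (K_2^{-1})^\top d\check{U}^c$ on the right-hand side of $-d\check{\varphi}^{u_2}$) gives $-\Xi_2H_2^\top (K_2^{-1})^\top K_2^{-1}H_2\tilde{\tilde{X}}^{u_2}$ in the drift and $-\Xi_2H_2^\top (K_2^{-1})^\top dW_2$, i.e.\ the opposite signs to \eqref{equation tilde tilde variphi}; the signs in \eqref{equation tilde tilde variphi} are the ones consistent with Lemma \ref{lemma nonlinear filtering equation}, which gives $d\check{\varphi}^{u_2}=\widehat{(\,\cdot\,)}\,dt+\Xi_2H_2^\top (K_2^{-1})^\top d\check{U}^c$, so you must fix the sign convention in \eqref{state equation leader check varphi} before subtracting, otherwise the step ``giving \eqref{equation tilde tilde variphi}'' does not follow as written.
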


Define
\begin{equation*}
	\begin{aligned}		
\Xi_3(t)&\coloneqq\mathbb{E}\Big[\tilde{\tilde{\varphi}}^{u_2}(t)\Big(\tilde{\tilde{\varphi}}^{u_2}(t)\Big)^\top\Big\vert\mathcal{Y}^{u_2}_2(t)\Big],\quad
\Xi_4(t)\coloneqq\mathbb{E}\Big[\big(\hat{X}^{u_2}(t)-\check{X}^{u_2}(t)\big)\Big(\tilde{\tilde{X}}^{u_2}(t)\Big)^\top\Big\vert\mathcal{Y}^{u_2}_2(t)\Big],\\		\Xi_5(t)&\coloneqq\mathbb{E}\Big[\big(\hat{X}^{u_2}(t)-\check{X}^{u_2}(t)\big)\Big(\tilde{\tilde{\varphi}}^{u_2}(t)\Big)^\top\Big\vert\mathcal{Y}^{u_2}_2(t)\Big],\\
\Xi_6(t)&\coloneqq\mathbb{E}\Big[\big(\hat{X}^{u_2}(t)-\check{X}^{u_2}(t)\big)\big(\hat{X}^{u_2}(t)-\check{X}^{u_2}(t)\big)^\top\Big\vert\mathcal{Y}^{u_2}_2(t)\Big],\\
	\end{aligned}
\end{equation*}
and use the following notations (For simplicity, we omit the time variable $t$.)
\begin{equation*}
	\begin{aligned}		
     &\widetilde{\Big[\tilde{\tilde{X}}^{u_2}\big(\tilde{\tilde{X}}^{u_2}\big)^\top\Big]}\coloneqq\tilde{\tilde{X}}^{u_2}\big(\tilde{\tilde{X}}^{u_2}\big)^\top
     -\mathbb{E}\Big[\tilde{\tilde{X}}^{u_2}\big(\tilde{\tilde{X}}^{u_2}\big)^\top\Big\vert\mathcal{Y}^{u_2}_2\Big],\\
     &\widetilde{\Big[\tilde{\tilde{\varphi}}^{u_2}\big(\tilde{\tilde{X}}^{u_2}\big)^\top\Big]}\coloneqq\tilde{\tilde{\varphi}}^{u_2}\big(\tilde{\tilde{X}}^{u_2}\big)^\top
     -\mathbb{E}\Big[\tilde{\tilde{\varphi}}^{u_2}\big(\tilde{\tilde{X}}^{u_2}\big)^\top\Big\vert\mathcal{Y}^{u_2}_2\Big],\\\
	 &\widetilde{\Big[\tilde{\tilde{\varphi}}^{u_2}\big(\tilde{\tilde{\varphi}}^{u_2}\big)^\top\Big]}\coloneqq\tilde{\tilde{\varphi}}^{u_2}\big(\tilde{\tilde{\varphi}}^{u_2}\big)^\top
     -\mathbb{E}\Big[\tilde{\tilde{\varphi}}^{u_2}\big(\tilde{\tilde{\varphi}}^{u_2}\big)^\top\Big\vert\mathcal{Y}^{u_2}_2\Big],\\
     &\widetilde{\Big[\big(\hat{X}^{u_2}-\check{X}^{u_2}\big)\big(\tilde{\tilde{X}}^{u_2}\big)^\top\Big]}\coloneqq\big(\hat{X}^{u_2}-\check{X}^{u_2}\big)\big(\tilde{\tilde{X}}^{u_2}\big)^\top
     -\mathbb{E}\Big[\big(\hat{X}^{u_2}-\check{X}^{u_2}\big)\big(\tilde{\tilde{X}}^{u_2}\big)^\top\Big\vert\mathcal{Y}^{u_2}_2\Big],\\
     &\widetilde{\Big[\big(\hat{X}^{u_2}-\check{X}^{u_2}\big)\big(\tilde{\tilde{\varphi}}^{u_2}\big)^\top\Big]}\coloneqq\big(\hat{X}^{u_2}-\check{X}^{u_2}\big)\big(\tilde{\tilde{\varphi}}^{u_2}\big)^\top
     -\mathbb{E}\Big[\big(\hat{X}^{u_2}-\check{X}^{u_2}\big)\big(\tilde{\tilde{\varphi}}^{u_2}\big)^\top\Big\vert\mathcal{Y}^{u_2}_2\Big],\\
     &\widetilde{\Big[\big(\hat{X}^{u_2}-\check{X}^{u_2}\big)\big(\hat{X}^{u_2}-\check{X}^{u_2}\big)^\top\Big]}\coloneqq\big(\hat{X}^{u_2}-\check{X}^{u_2}\big)\big(\hat{X}^{u_2}-\check{X}^{u_2}\big)^\top\\
     &\hspace{5.6cm} -\mathbb{E}\Big[\big(\hat{X}^{u_2}-\check{X}^{u_2}\big)\big(\hat{X}^{u_2}-\check{X}^{u_2}\big)^\top\Big\vert\mathcal{Y}^{u_2}_2\Big].
	\end{aligned}
\end{equation*}

The following coupled representations of $\Xi_1(\cdot)$ and $\Xi_2(\cdot)$ are direct results of It\^o's formula and Lemma \ref{lemma nonlinear filtering equation}.
\begin{lemma}
Let Assumption \ref{assumption state} hold. For any $u_2\in\mathcal{U}^L_{ad}$, $\Xi_1(\cdot)$ and $\Xi_2(\cdot)$ satisfy	
	\begin{equation}\label{Xi 1}
		\begin{aligned}
			\Xi_1(t)=&\int_0^t\Big\{\Xi_1A^\top+A\Xi_1-\Xi_4^\top PB_1R_1^{-1}B_1^\top -B_1R_1^{-1}B_1^\top (\Xi_4-\Xi_2^\top B_1R_1^{-1}B_1^\top\\
			&\qquad -B_1R_1^{-1}B_1^\top\Xi_2-\Xi_1H_2^\top(K^{-1}_2)^\top K^{-1}_2H_2\Xi_1 +C_2C_2^\top\Big\}ds\\			
            &+\int_0^t\mathbb{E}\Big[\widetilde{\Big[\tilde{\tilde{X}}^{u_2}\big(\tilde{\tilde{X}}^{u_2}\big)^\top\Big]}\big(\tilde{\tilde{X}}^{u_2}\big)^\top\Big\vert\mathcal{Y}_2^{u_2}\Big]H_2^\top(K^{-1}_2)^\top d\check{U}^c(s),
		\end{aligned}
	\end{equation}	
and
	\begin{equation}\label{Xi 2}
		\begin{aligned}
			\Xi_2(t)=&\int_0^t\Big\{\Xi_2A^\top-A^\top\Xi_2-\Xi_5^\top PB_1R_1^{-1}B_1^\top -\Xi_3B_1R_1^{-1}B_1^\top+PB_1R_1^{-1}B_1^\top\Xi_2\\
			&\qquad -\Xi_2H_2^\top (K^{-1}_2)^\top K^{-1}_2H_2\Xi_1+\check{\phi}^{u_2}C_1^\top+\int_{E_1}\check{\psi}^{u_2}(e)D_1(e)^\top\nu_1(de)\Big\}ds\\
			&+\int_0^t\mathbb{E}\Big[\widetilde{\Big[\tilde{\tilde{\varphi}}^{u_2}\big(\tilde{\tilde{X}}^{u_2}\big)^\top\Big]}\big(\tilde{\tilde{X}}^{u_2}\big)^\top\Big\vert\mathcal{Y}_2^{u_2}\Big]H_2^\top (K^{-1}_2)^\top d\check{U}^c(s),
		\end{aligned}
	\end{equation}
respectively, where
	\begin{equation*}
		\begin{aligned}
			\Xi_3(t)=&\int_t^T\Big\{\Xi_3\big[A^\top-PB_1R_1^{-1}B_1^\top\big]^\top+\big[A^\top-PB_1R_1^{-1}B_1^\top\big]\Xi_3+\Xi_2H_2^\top (K^{-1}_2)^\top K^{-1}_2H_2\Xi_2^\top\\
			&\qquad -\mathbb{E}\big[\phi^{u_2}\big(\phi^{u_2}\big)^\top\big\vert\mathcal{Y}^{u_2}_2\big]-\int_{E_1}\mathbb{E}\big[\psi^{u_2}(e)\big(\psi^{u_2}(e)\big)^\top\big\vert\mathcal{Y}^{u_2}_2\big]\nu_1(de)\Big\}ds\\
			&-\int_t^T\mathbb{E}\Big[\widetilde{\Big[\tilde{\tilde{\varphi}}^{u_2}\big(\tilde{\tilde{\varphi}}^{u_2}\big)^\top\Big]}\big(\tilde{\tilde{X}}^{u_2}\big)^\top
            \Big\vert\mathcal{Y}_2^{u_2}(s)\Big]H_2^\top (K^{-1}_2)^\top d\check{U}^c(s),\\
	    	\Xi_4(t)=&\int_0^t\Big\{\Xi_4\big[A-\big[\Xi_1H_2^\top (K^{-1}_2)^\top+C_2(s)\big]K^{-1}_2H_2\big]^\top+\big[A-B_1R_1^{-1}B_1^\top P(s)\big]\Xi_4\\
			&\qquad -\Xi_6PB_1R_1^{-1}B_1^\top-\Xi_5B_1R_1^{-1}B_1^\top-B_1R_1^{-1}B_1^\top\Xi_2+\Xi_1H_2^\top (K^{-1}_2)^\top C_2^\top\\
			&\qquad +C_1C_1^\top+C_2C_2^\top+\int_{E_1}D_1,e)D_1(e)^\top\nu_1(de)\Big\}ds\\		
            &+\int_0^t\mathbb{E}\Big[\widetilde{\big[\big(\hat{X}^{u_2}-\check{X}^{u_2}\big)\big(\tilde{\tilde{X}}^{u_2}\big)^\top\big]}\big(\tilde{\tilde{X}}^{u_2}\big)^\top\Big\vert\mathcal{Y}_2^{u_2}\Big]
              H_2^\top (K^{-1}_2)^\top d\check{U}^c(s),
		\end{aligned}
	\end{equation*}
	\begin{equation*}
		\begin{aligned}
			\Xi_5(t)=&\int_0^t\Big\{-\Xi_5\big[A^\top-PB_1R_1^{-1}B_1^\top\big]^\top+\big[A-B_1R_1^{-1}B_1^\top P\big]\Xi_5-\Xi_4H_2^\top (K^{-1}_2)^\top K^{-1}_2H_2\Xi_2\\
			&\qquad -B_1R_1^{-1}B_1^\top\Xi_3+C_1(\check{\phi}^{u_2})^\top+\int_{E_1}D_1(e)(\check{\psi}^{u_2}(e))^\top\nu_1(de)\Big\}ds\\		
         	&+\int_0^t\mathbb{E}\Big[\widetilde{\big[\big(\hat{X}^{u_2}-\check{X}^{u_2}\big)\big(\tilde{\tilde{\varphi}}^{u_2}\big)^\top\big]}\big(\tilde{\tilde{X}}^{u_2}\big)^\top
            \Big\vert\mathcal{Y}_2^{u_2}\Big]H_2^\top (K^{-1}_2)^\top d\check{U}^c(s),
		\end{aligned}
	\end{equation*}
	and
	\begin{equation*}
		\begin{aligned}
			\Xi_6(t)=&\int_0^t\Big\{\big[A-B_1R_1^{-1}B_1^\top P(s)\big]\Xi_6+\Xi_6\big[A-B_1R_1^{-1}B_1^\top P\big]^\top-\Xi_5B_1R_1^{-1}B_1^\top-B_1R_1^{-1}B_1^\top\Xi_5^\top\\
			&\qquad -\Xi_4H_2^\top (K^{-1}_2)^\top\big[\Xi_1H_2^\top (K^{-1}_2)^\top+C_2\big]^\top-\big[\Xi_1H_2^\top (K^{-1}_2)^\top+C_2\big]K^{-1}_2H_2\Xi_4^\top\\
			&\qquad +C_1C_1^\top+\big[\Xi_1H_2^\top (K^{-1}_2)^\top+C_2\big]\big[\Xi_1H_2^\top (K^{-1}_2)^\top+C_2\big]^\top\\
			&\qquad +\int_{E_1}D_1(e)D_1(e)^\top\nu_1(de)+\int_{E_2}D_2(e)D_2(e)^\top\nu_2(de)\Big\}ds\\	
     		&+\int_0^t\mathbb{E}\Big[\widetilde{\big[\big(\hat{X}^{u_2}-\check{X}^{u_2}\big)\big(\hat{X}^{u_2}-\check{X}^{u_2}\big)^\top\big]}\big(\tilde{\tilde{X}}^{u_2}\big)^\top\Big\vert\mathcal{Y}_2^{u_2}\Big]
              H_2^\top (K^{-1}_2)^\top d\check{U}^c(s)\\
			&+\int_0^t\int_{E_2}D_2(e)D_2(e)^\top\tilde{N}_2(de,ds).
		\end{aligned}
	\end{equation*}
\end{lemma}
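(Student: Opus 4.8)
The plan is to realize each of $\Xi_1,\dots,\Xi_6$ as the $\mathcal{Y}^{u_2}_2(t)$-conditional expectation of a bilinear functional of the three ``estimation error'' processes whose dynamics are already available: $\tilde{\tilde{X}}^{u_2}$ solving \eqref{equation tilde tilde X}, $\tilde{\tilde{\varphi}}^{u_2}$ solving \eqref{equation tilde tilde variphi}, and $\hat{X}^{u_2}-\check{X}^{u_2}$, whose forward SDEP I would first obtain by subtracting \eqref{state equation leader check X} from \eqref{filter equation of X wrt Y1} (recall that in this section $H_1\equiv 0$, so the $\Sigma H_1^\top(K_1^{-1})^\top$ gain in \eqref{filter equation of X wrt Y1} drops and $\hat{X}^{u_2}$ carries only the $C_1\,dW_1$ and $\int_{E_1}D_1\tilde{N}_1$ noises). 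For each relevant pair of these processes I would apply It\^o's product rule to produce an SDEP for the product, then invoke the nonlinear filtering theorem, Lemma \ref{lemma nonlinear filtering equation}, with observation process $Y^{u_2}_2$: this turns the product's SDEP into the SDEP for its conditional expectation, whose $ds$-drift is the projected drift plus the cross-variation of the product's martingale part with the observation noise $K_2\,dW_2+\int_{E_2}f_2\tilde{N}_2$ (this is where the Riccati-type information term of the form $-\,\Xi H_2^\top(K_2^{-1})^\top K_2^{-1}H_2\Xi$ enters), and whose diffusion against the innovation $d\check{U}^c$ has integrand $\mathbb{E}\big[\widetilde{(\,\cdot\,)}\,(\tilde{\tilde{X}}^{u_2})^\top\,\big|\,\mathcal{Y}^{u_2}_2\big]H_2^\top(K_2^{-1})^\top$, with $\widetilde{(\,\cdot\,)}$ the centered version of the corresponding product.

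Concretely, I would carry out the six computations in turn. For $\Xi_1$, It\^o on $\tilde{\tilde{X}}^{u_2}(\tilde{\tilde{X}}^{u_2})^\top$ via \eqref{equation tilde tilde X} gives the symmetric drift $A\,\tilde{\tilde{X}}^{u_2}(\tilde{\tilde{X}}^{u_2})^\top+\tilde{\tilde{X}}^{u_2}(\tilde{\tilde{X}}^{u_2})^\top A^\top$, the $B_1R_1^{-1}B_1^\top$-coupling terms that become $\Xi_4$ and $\Xi_2$ under the conditional expectation, and the quadratic variations of the bracketed diffusion coefficient and of the $\tilde{N}_1$-jump part (producing the $C_2C_2^\top$ term); Lemma \ref{lemma nonlinear filtering equation} then supplies the $-\,\Xi_1 H_2^\top(K_2^{-1})^\top K_2^{-1}H_2\Xi_1$ correction and the $d\check{U}^c$-term, yielding \eqref{Xi 1}. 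The representations of $\Xi_4$ and $\Xi_6$ follow the same recipe applied to $(\hat{X}^{u_2}-\check{X}^{u_2})(\tilde{\tilde{X}}^{u_2})^\top$ and $(\hat{X}^{u_2}-\check{X}^{u_2})(\hat{X}^{u_2}-\check{X}^{u_2})^\top$. For the forward--backward products $\tilde{\tilde{\varphi}}^{u_2}(\tilde{\tilde{X}}^{u_2})^\top$, $(\hat{X}^{u_2}-\check{X}^{u_2})(\tilde{\tilde{\varphi}}^{u_2})^\top$ and $\tilde{\tilde{\varphi}}^{u_2}(\tilde{\tilde{\varphi}}^{u_2})^\top$ I would use the It\^o product rule for a forward process against a BSDEP: the cross-variation between the $-\phi^{u_2}dW_1$, $-\int_{E_1}\psi^{u_2}\tilde{N}_1$ martingale part of \eqref{equation tilde tilde variphi} and the $C_1\,dW_1$, $\int_{E_1}D_1\tilde{N}_1$ parts of the forward equations produces the $\check{\phi}^{u_2}C_1^\top$ and $\int_{E_1}\check{\psi}^{u_2}(e)D_1(e)^\top\nu_1(de)$ terms in $\Xi_2$ and $\Xi_5$, while the quadratic variation of that same BSDEP martingale part leaves the unreduced terms $\mathbb{E}[\phi^{u_2}(\phi^{u_2})^\top|\mathcal{Y}^{u_2}_2]$ and $\int_{E_1}\mathbb{E}[\psi^{u_2}(e)(\psi^{u_2}(e))^\top|\mathcal{Y}^{u_2}_2]\nu_1(de)$ in $\Xi_3$ — which is acceptable, since the statement asserts only a representation, not a closed system.

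The hard part is not any single estimate but disciplined bookkeeping. One must keep strict track of which of $W_1,W_2,N_1,N_2$ is ``seen'' by $\mathcal{Y}^{u_2}_2$ (using the standing assumption $\mathcal{Y}^{u_2}_2(t)\subset\mathcal{Y}^{\bar{u}_1,u_2}_1(t)$ together with $H_1\equiv0$), so that the corresponding martingale increments project correctly and only the genuinely unobserved noises generate cross-variation and innovation-correction contributions; symmetrize each drift for the symmetric objects $\Xi_1,\Xi_3,\Xi_6$ while retaining the transpose-sensitive form for the cross-covariances $\Xi_2,\Xi_4,\Xi_5$; and read off from Lemma \ref{lemma nonlinear filtering equation} the correct centered integrand $\widetilde{(\,\cdot\,)}$ against $d\check{U}^c$ for every product. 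With these conventions fixed, collecting terms yields precisely the six stated formulas.
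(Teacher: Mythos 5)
Your proposal is correct and coincides with the paper's own (very terse) argument: the paper simply declares these representations to be ``direct results of It\^o's formula and Lemma \ref{lemma nonlinear filtering equation}'', which is precisely the It\^o-product-rule-plus-filtering computation you outline, and which the paper does carry out explicitly in the analogous follower case (Lemma \ref{lemma4.1}). One small correction to your narrative: in the innovation form of Lemma \ref{lemma nonlinear filtering equation} the $ds$-drift is only the projected drift (the cross-variation with the observation noise goes into the $d\check{U}^c$-integrand, and here it projects away since $\mathbb{E}[\tilde{\tilde{X}}^{u_2}\vert\mathcal{Y}^{u_2}_2]=0$), so the Riccati-type term $-\Xi_1H_2^\top(K_2^{-1})^\top K_2^{-1}H_2\Xi_1$ in fact arises from the innovation gain already built into the dynamics \eqref{equation tilde tilde X} and \eqref{state equation leader check varphi} combined with the quadratic-variation terms, not from an extra drift correction in the filtering lemma --- a misattribution that does not affect the outcome of your computation.
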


\subsection{The leader's optimal control problem}

In this subsection, we continue to deal with the leader's problem, which is a partially observed SLQ optimal control problem, subject to the state equation \eqref{state equation leader} and the cost functional $J_2(\bar{u}_1,u_2)$ in \eqref{cost J1 J2 Sec2}. Similar as in section \ref{Sec follower's}, by introducing
\begin{equation*}
	\left\{
	\begin{aligned}
		\check{J}_2(u_2)&\coloneqq\check{J}_2(\bar{u}_1,u_2)\coloneqq\mathbb{E}\left[\int_0^T\frac{1}{2}\langle R_2(t)u_2(t),u_2(t)\rangle dt\right] +\mathbb{E}\big[\check{X}^{u_2}(T)\big]+\frac{\theta_2}{2}\text{Var}\big[\check{X}^{u_2}(T)\big],\\
		\tilde{J}_2(u_2)&\coloneqq\tilde{J}_2(\bar{u}_1,u_2)\coloneqq\mathbb{E}\big[\tilde{\tilde{X}}^{u_2}(T)\big]+\frac{\theta_2}{2}\text{Var}\big[\tilde{\tilde{X}}^{u_2}(T)\big],
	\end{aligned}
	\right.
\end{equation*}
we decompose the cost functional $J_2(\bar{u}_1,u_2)$ in \eqref{cost J1 J2 Sec2} as
\begin{equation}\label{decom J2}
	J_2(\bar{u}_1,u_2)=\check{J}_2(\bar{u}_1,u_2)+\tilde{J}_2(\bar{u}_1,u_2)\equiv\check{J}_2(u_2)+\tilde{J}_2(u_2),
\end{equation}
corresponding to the state equations \eqref{state equation leader check X}-\eqref{state equation leader check varphi} and \eqref{equation tilde tilde X}-\eqref{equation tilde tilde variphi}, respectively.

As in the follower's problem, $\check{J}_2(u_2)$ is a mean-variance criteria and $\tilde{J}_2(u_2)$ is independent of $u_2$. Denote the mean-variance problem with respect to $\check{J}_2(u_2)$ by
\begin{equation*}
	\mathbf{P_F(\theta_2):}\quad\mbox{Minimize } \check{J}_2(u_2)\mbox{ over }u_2\in\mathcal{U}^L_{ad},\mbox{ subject to }\eqref{state equation leader check X}\mbox{ and }\eqref{state equation leader check varphi}.
\end{equation*}
Define the optimal control set of problem $P_F(\theta_2)$ as
\begin{equation*}
	\Pi_{P_F(\theta_2)}\coloneqq\bigl\{u_2(\cdot)\big\vert u_2(\cdot) \mbox{ is an optimal control of }P_F(\theta_2)\bigr\}.
\end{equation*}

Introducing the cost functional $\check{\mathcal{J}}_2(u_2)$ by
\begin{equation*}
	\check{\mathcal{J}}_2(u_2)=\mathbb{E}\left[\int_0^T\frac{1}{2}\langle R_2(t)u_2(t),u_2(t)\rangle dt+\lambda_2\check{X}^{u_2}(T)+\frac{\theta_2}{2}|\check{X}^{u_2}(T)|^2\right],
\end{equation*}
we have the following auxiliary problem:
\begin{equation*}
	\begin{aligned}
		\mathbf{A_F(\theta_2,\lambda_2):}\quad\mbox{ Minimize }\check{\mathcal{J}}_2(u_2)\mbox{ over }u_2\in\mathcal{U}^L_{ad},\mbox{ subject to }\eqref{state equation leader check X}\mbox{ and }\eqref{state equation leader check varphi}.
		\end{aligned}
\end{equation*}
Define the optimal control set of problem $A_F(\theta_2,\lambda_2)$ as
\begin{equation*}
	\Pi_{A_F(\theta_2,\lambda_2)}\coloneqq\bigl\{u_2(\cdot)\big\vert u_2(\cdot)\mbox{ is an optimal control of }A_F(\theta_2,\lambda_2)\bigr\}.
\end{equation*}

Similar as Lemma \ref{embedding theorem}, the following result shows the relationship between problems $P_F(\theta_2)$ and $A_F(\theta_2,\lambda_2)$.
\begin{lemma}({\bf Embedding theorem of the leader})\label{lemma emb2}
	For any $\lambda_2>0$, one has
	\begin{equation}
		\Pi_{P_F(\theta_2)}\subset\bigcup_{-\infty<\lambda_2<+\infty}\Pi_{A_F(\theta_2,\lambda_2)}.
	\end{equation}
	Moreover, if $\bar{u}_2\in\Pi_{P_F(\theta_2)}$, then $\bar{u}_2\in\Pi_{A_F(\theta_2,\bar{\lambda}_2)}$ with $\bar{\lambda}_2=1-\theta_2\mathbb{E}[\check{X}^{\bar{u}_2}_T]$, where $\check{X}^{\bar{u}_2}$ is the corresponding trajectory.
\end{lemma}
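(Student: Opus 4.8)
The proof runs in exact parallel to that of Lemma \ref{embedding theorem}, now carried out at the level of the leader's filtered state $\check{X}^{u_2}$. As there, it suffices to prove the second assertion, since the inclusion $\Pi_{P_F(\theta_2)}\subset\bigcup_{-\infty<\lambda_2<+\infty}\Pi_{A_F(\theta_2,\lambda_2)}$ is an immediate consequence of it. First I would rewrite $\check{J}_2$ as a concave function of three scalar functionals of the control: using $\text{Var}[\check{X}^{u_2}(T)]=\mathbb{E}|\check{X}^{u_2}(T)|^2-\big(\mathbb{E}[\check{X}^{u_2}(T)]\big)^2$, set
\begin{equation*}
	\pi(x,y,v)\coloneqq\frac{\theta_2}{2}x-\frac{\theta_2}{2}y^2+y+v,
\end{equation*}
which is concave in $(x,y,v)$ because $\theta_2>0$, so that
\begin{equation*}
	\check{J}_2(u_2)=\pi\left(\mathbb{E}|\check{X}^{u_2}(T)|^2,\ \mathbb{E}[\check{X}^{u_2}(T)],\ \mathbb{E}\int_0^T\frac{1}{2}\langle R_2(t)u_2(t),u_2(t)\rangle dt\right),
\end{equation*}
while $\check{\mathcal{J}}_2(u_2)$ with parameter $\lambda_2$ is the value of the affine map $(x,y,v)\mapsto\frac{\theta_2}{2}x+\lambda_2 y+v$ at the same triple.

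Next, fix $\bar{u}_2\in\Pi_{P_F(\theta_2)}$ with filtered trajectory $\check{X}^{\bar{u}_2}$ (the forward component of the solution of \eqref{state equation leader check X}--\eqref{state equation leader check varphi} driven by $\bar{u}_2$) and put $\bar{\lambda}_2=1-\theta_2\mathbb{E}[\check{X}^{\bar{u}_2}(T)]$. Since $\frac{\partial\pi}{\partial x}=\frac{\theta_2}{2}$, $\frac{\partial\pi}{\partial y}=1-\theta_2 y$ and $\frac{\partial\pi}{\partial v}=1$, evaluating the gradient of $\pi$ at the triple associated with $\bar{u}_2$ produces exactly the coefficients $\big(\frac{\theta_2}{2},\bar{\lambda}_2,1\big)$ of the affine map defining $\check{\mathcal{J}}_2$ with $\lambda_2=\bar{\lambda}_2$; hence that affine map is the tangent (first-order) approximation of $\pi$ at the triple of $\bar{u}_2$. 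The supporting-hyperplane inequality for the concave function $\pi$ then yields, for every $u_2\in\mathcal{U}^L_{ad}$,
\begin{equation*}
	\check{J}_2(u_2)\leqslant\check{J}_2(\bar{u}_2)+\big[\check{\mathcal{J}}_2(u_2)-\check{\mathcal{J}}_2(\bar{u}_2)\big],
\end{equation*}
where both auxiliary costs are taken with $\lambda_2=\bar{\lambda}_2$. If $\bar{u}_2\notin\Pi_{A_F(\theta_2,\bar{\lambda}_2)}$, there would exist $u_2\in\mathcal{U}^L_{ad}$ with $\check{\mathcal{J}}_2(u_2)<\check{\mathcal{J}}_2(\bar{u}_2)$, whence $\check{J}_2(u_2)<\check{J}_2(\bar{u}_2)$, contradicting the optimality of $\bar{u}_2$ for $P_F(\theta_2)$. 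Therefore $\bar{u}_2\in\Pi_{A_F(\theta_2,\bar{\lambda}_2)}$, as claimed.

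The algebra is a verbatim transcription of the proof of Lemma \ref{embedding theorem}, so I do not expect a serious obstacle in the inequality manipulations themselves. The one point worth making explicit is the well-posedness of problems $P_F(\theta_2)$ and $A_F(\theta_2,\lambda_2)$, i.e. that for each $u_2\in\mathcal{U}^L_{ad}$ the filtered state $\check{X}^{u_2}$ is uniquely defined as the forward component of the coupled FBSDEP \eqref{state equation leader check X}--\eqref{state equation leader check varphi}, so that the three functionals entering $\pi$ are finite. Granted this, the embedding argument depends only on the terminal variable $\check{X}^{u_2}(T)$ and is insensitive to the forward--backward coupling; the control-dependence of the admissible filtration $\mathcal{Y}^{u_2}_2$ is likewise harmless, since $u_2$ and its induced observation filtration vary together and the displayed inequality holds pointwise in $u_2$.
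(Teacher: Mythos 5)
Your argument is correct and is essentially the paper's own approach: the paper proves the follower's embedding theorem (Lemma \ref{embedding theorem}) via the concave function $\pi(x,y,v)=\frac{\theta}{2}x-\frac{\theta}{2}y^2+y+v$ and a supporting-hyperplane/contradiction argument, and simply invokes the same reasoning for the leader, which is exactly what you have written out with $\check{X}^{u_2}(T)$ in place of $\hat{X}^{u_1,u_2}(T)$. Your explicit use of the strict inequality $\check{\mathcal{J}}_2(u_2)<\check{\mathcal{J}}_2(\bar{u}_2)$ even states the contradiction a bit more cleanly than the paper's follower proof, so no gap remains.
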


Now we are going to study $A_F(\theta_2,\lambda_2)$, which is rather difficult than $A_F(\theta_1,\lambda_1)$ in the follower's problem and can not be solved by the technique in \cite{ZL00} directly. For this target, in order to deal with $\check{\mathcal{J}}_2(u_2)$, we follow the method developed in Hu et al. \cite{HJX23}.

Let $\bar{u}_2$ be an optimal control, and let $(\bar{\check{X}},\bar{\check{\varphi}},\bar{\check{\phi}})\coloneqq(\check{X}^{\bar{u}_2},\check{\varphi}^{\bar{u}_2},\check{\phi}^{\bar{u}_2})$ be the corresponding optimal state. Then by the SMP of FBSDEPs (see \O skendal and Sulem \cite{OS09}, or Shi and Wu \cite{SW10}), the optimal control $\bar{u}_2$ satisfies
\begin{equation}\label{maximum condition leader-SMP}
	B_2(t)^{\top}m(t)+B_2(t)^{\top}P(t)h(t)+R_2(t)\bar{u}_2(t)=0,\quad t\in[0,T],
\end{equation}
where process quadruple $(h,m,n,r)\equiv\{h(t)\in\mathbb{R},m(t)\in\mathbb{R}^n,n(t)\in\mathbb{R}^n,r(t,e)\in\mathbb{R}^n;0\leqslant t\leqslant T,e\in E_2\}$ satisfies
\begin{equation}\label{adjoint equation hmn leader}
	\left\{
	\begin{aligned}
		dh(t)&=\Big\{-B_1(t)R_1^{-1}(t)B_1(t)^\top m(t)+\big[A(t)-B_1(t)R_1^{-1}(t)B_1(t)^\top P(t)\big]h(t)\Big\}dt,\\
		-dm(t)&=\big[A(t)^\top-P(t)B_1(t)R_1^{-1}(t)B_1(t)^\top\big]m(t)dt-n(t)d\bar{\check{U}}^c(t)-\int_{E_2}r(t,e)\tilde{N}_2(de,dt),\\
		h(0)&=0,\quad m(T)=\theta_2\bar{\check{X}}(T)+\lambda_2.
	\end{aligned}
	\right.
\end{equation}

Condition \eqref{maximum condition leader-SMP} becomes sufficient since $R_2(\cdot)>0$. Actually, from \cite{OS09} and \cite{SW10}, we have the following result.
\begin{lemma}\label{lemma suffi}
	Let Assumption \ref{assumption state} hold. If there exists an admissible control $\bar{u}_2$ satisfying \eqref{maximum condition leader-SMP}, where $(h,m)$ is defined in \eqref{adjoint equation hmn leader} which is coupled with the optimal state $(\bar{\check{X}},\bar{\check{\phi}},\bar{\check{\phi}})$ of
\begin{equation}\label{optimal state equation leader check X}
	\left\{
	\begin{aligned}
		d\bar{\check{X}}(t)&=\Big\{\big[A(t)-B_1(t)R_1^{-1}(t)B_1(t)^\top P(t)\big]\bar{\check{X}}(t)-B_1(t)R_1^{-1}(t)B_1(t)^\top\bar{\check{\varphi}}(t)\\
		&\qquad -B_2(t)R_2^{-1}(t)B_2(t)^\top\big[ m(t)+P(t)h(t)\big]\Big\}dt\\
		&\quad+\Big\{\Xi_1(t)H_2(t)^\top K_2^{-1}(t)^\top+C_2(t)\Big\}d\bar{\check{U}}^c(t)+\int_{E_2}D_2(t,e)\tilde{N}_2(de,dt),\\
		-d\bar{\check{\varphi}}(t)&=\Big\{\big[A(t)^\top-P(t)B_1(t)R_1^{-1}(t)B_1(t)^\top\big]\bar{\check{\varphi}}(t)\\
		&\qquad -P(t)B_2(t)R_2^{-1}(t)B_2(t)^\top\big[m(t)+P(t)h(t)\big]\Big\}dt-\bar{\check{\phi}}(t)d\bar{\check{U}}^c(t),\\
		\bar{\check{X}}(0)&=x_0,\quad\bar{\check{\varphi}}(T)=\bar{\lambda}_1,
	\end{aligned}
	\right.
\end{equation}
where $\bar{\check{U}}^c(t)\coloneqq\int_0^tK^{-1}_2(s)H_2(s)\tilde{\tilde{X}}^{\bar{u}_2}(s)ds+\int_0^tdW_2(s)$, by its terminal condition, then $\bar{u}_2$ is the unique optimal control for the problem $A_F(\theta_2,\lambda_2)$.
\end{lemma}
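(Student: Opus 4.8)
The plan is to verify that the candidate control $\bar u_2$ furnished by the stationarity condition \eqref{maximum condition leader-SMP} is in fact a global minimiser of $\check{\mathcal J}_2$ over $\mathcal U^L_{ad}$, and then conclude uniqueness from strict convexity. The starting point is that $\check{\mathcal J}_2$ is a convex functional of $u_2$: the running cost $\tfrac12\langle R_2 u_2,u_2\rangle$ is strictly convex because $R_2(\cdot)>0$ by Assumption~\ref{assumption state}(3), the term $\lambda_2\check X^{u_2}(T)$ is affine in $u_2$ (since $u_2\mapsto\check X^{u_2}$ is affine by linearity of \eqref{state equation leader check X}--\eqref{state equation leader check varphi}), and $\tfrac{\theta_2}{2}|\check X^{u_2}(T)|^2$ is convex because $\theta_2>0$ and a squared norm composed with an affine map is convex. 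Hence $\check{\mathcal J}_2$ is strictly convex in $u_2$, so a critical point is automatically the unique global minimiser; the whole task reduces to showing $\bar u_2$ is a critical point, i.e.\ that the Gateaux derivative of $\check{\mathcal J}_2$ at $\bar u_2$ in every admissible direction $v$ vanishes, and that this variational identity is precisely \eqref{maximum condition leader-SMP}.

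First I would fix an admissible perturbation $v\in\mathcal U^L_{ad}$, set $u_2^\varepsilon=\bar u_2+\varepsilon v$, and let $(\check X^\varepsilon,\check\varphi^\varepsilon,\check\phi^\varepsilon)$ denote the corresponding solution of the forward–backward system \eqref{state equation leader check X}--\eqref{state equation leader check varphi}. By linearity, the variation $(\delta\check X,\delta\check\varphi,\delta\check\phi)$ obtained as the $\varepsilon$-derivative at $\varepsilon=0$ solves the same linear FBSDEP but driven only by the $B_2 v$ and $P B_2 v$ terms and with zero terminal/initial data on the homogeneous parts. Differentiating $\check{\mathcal J}_2(u_2^\varepsilon)$ at $\varepsilon=0$ gives
\begin{equation*}
	\frac{d}{d\varepsilon}\check{\mathcal J}_2(u_2^\varepsilon)\Big|_{\varepsilon=0}
	=\mathbb{E}\left[\int_0^T\langle R_2(t)\bar u_2(t),v(t)\rangle\,dt
	+\big(\lambda_2+\theta_2\bar{\check X}(T)\big)\,\delta\check X(T)\right].
\end{equation*}
The next step is the standard duality computation: apply It\^o's formula to $\langle m(t),\delta\check X(t)\rangle + \langle h(t),\delta\check\varphi(t)\rangle$ on $[0,T]$, using the dynamics in \eqref{adjoint equation hmn leader} for $(h,m,n,r)$ and the variational dynamics of $(\delta\check X,\delta\check\varphi)$. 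The terms involving $A$, $P$, $B_1R_1^{-1}B_1^\top$ cancel pairwise by the transpose structure, the martingale parts (integrals against $\bar{\check U}^c$ and $\tilde N_2$) have zero expectation, and one is left with
\begin{equation*}
	\mathbb{E}\big[\langle m(T),\delta\check X(T)\rangle\big]
	=\mathbb{E}\left[\int_0^T\langle B_2(t)^\top m(t)+B_2(t)^\top P(t)h(t),v(t)\rangle\,dt\right],
\end{equation*}
using $h(0)=0$, $\delta\check X(0)=0$ and $\delta\check\varphi(T)=0$. Substituting $m(T)=\theta_2\bar{\check X}(T)+\lambda_2$ and combining with the expression above yields
\begin{equation*}
	\frac{d}{d\varepsilon}\check{\mathcal J}_2(u_2^\varepsilon)\Big|_{\varepsilon=0}
	=\mathbb{E}\left[\int_0^T\big\langle R_2(t)\bar u_2(t)+B_2(t)^\top m(t)+B_2(t)^\top P(t)h(t),\,v(t)\big\rangle\,dt\right],
\end{equation*}
which is zero for all admissible $v$ exactly because $\bar u_2$ satisfies \eqref{maximum condition leader-SMP}. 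Convexity then upgrades ``critical point'' to ``global minimiser,'' and strict convexity (from $R_2>0$) forces uniqueness.

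The main obstacle is not conceptual but technical: one must make sure the FBSDEP variational system \eqref{state equation leader check X}--\eqref{state equation leader check varphi} together with the adjoint system \eqref{adjoint equation hmn leader} is well posed and that all the stochastic integrals appearing in the It\^o expansion of $\langle m,\delta\check X\rangle+\langle h,\delta\check\varphi\rangle$ are genuine martingales (so their expectations vanish), which requires the integrability afforded by $\mathcal U^L_{ad}$ and boundedness of the coefficients. A subtler point is that the filtering-derived coefficients $\Xi_1(\cdot),\Xi_2(\cdot)$ entering \eqref{optimal state equation leader check X} are themselves random processes, so one should note that $\delta\check X$ does not depend on them in its drift (they sit only in the diffusion coefficient, which carries no $u_2$), keeping the variational equation linear with the required structure; for this reason I would carry out the duality pairing with $(h,m)$ rather than attempting a direct HJB/Riccati argument. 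Since Assumption~\ref{assumption state} supplies all the needed boundedness and positivity, and the excerpt already cites \cite{OS09} and \cite{SW10} for the FBSDEP maximum principle, I would simply invoke those for well-posedness and the variational estimates, and present the duality identity and the convexity conclusion as the substance of the proof.
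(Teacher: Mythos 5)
Your argument is correct in substance, and it reaches the conclusion by essentially the same mechanism the paper relies on, except that the paper does not spell it out: the paper merely remarks that \eqref{maximum condition leader-SMP} becomes sufficient because $R_2(\cdot)>0$ and invokes the sufficient maximum principle for controlled FBSDEPs of \cite{OS09} and \cite{SW10}. What you do is unwind that citation into a self-contained linear--quadratic verification: strict convexity of $\check{\mathcal{J}}_2$ (from $R_2>0$, $\theta_2>0$ and affinity of $u_2\mapsto(\check X^{u_2},\check\varphi^{u_2})$) plus a duality computation identifying the Gateaux derivative at $\bar u_2$ with $\mathbb{E}\int_0^T\langle R_2\bar u_2+B_2^\top m+B_2^\top P h,v\rangle dt$. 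This buys transparency -- in particular it makes explicit that the diffusion and jump coefficients of \eqref{state equation leader check X}--\eqref{state equation leader check varphi}, as well as $\bar{\check U}^c$, $\Xi_1$, $\Xi_2$, carry no $u_2$, so the variational system stays linear with no martingale cross-variation against $\delta\check X$ -- at the cost of redoing what the cited theorems already give. One correction to the write-up: the duality functional should be $\langle m(t),\delta\check X(t)\rangle-\langle h(t),\delta\check\varphi(t)\rangle$ rather than the sum; with the sum, the two $B_1R_1^{-1}B_1^\top$ cross terms (namely $-\langle m,B_1R_1^{-1}B_1^\top\delta\check\varphi\rangle$ from $\langle m,d\,\delta\check X\rangle$ and $-\langle B_1R_1^{-1}B_1^\top m,\delta\check\varphi\rangle$ from $\langle dh,\delta\check\varphi\rangle$) add instead of cancelling, and one also needs the symmetry of $P$ (which holds since it solves the symmetric Riccati equation \eqref{equation P follower}) to perform the transpose cancellations. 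Because $h(0)=0$ and $\delta\check\varphi(T)=0$, the boundary contributions of the $h$--$\delta\check\varphi$ pairing vanish at both endpoints, so after this sign fix your identity $\mathbb{E}[\langle m(T),\delta\check X(T)\rangle]=\mathbb{E}\int_0^T\langle B_2^\top m+B_2^\top Ph,v\rangle dt$ and the convexity/uniqueness conclusion stand; the remaining integrability caveats (square-integrability of admissible controls and of the adjoint pair from \eqref{adjoint equation hmn leader}, needed so the stochastic integrals are true martingales) are at the same level of rigor the paper itself assumes when citing \cite{OS09} and \cite{SW10}.
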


Set
\begin{equation}\label{X Y Z}
		\mathbb{X}(\cdot)\coloneqq\begin{bmatrix}\bar{\check{X}}(\cdot)\\h(\cdot)\end{bmatrix},\quad
\mathbb{Y}(\cdot)\coloneqq\begin{bmatrix}m(\cdot)\\\bar{\check{\varphi}}(\cdot)\end{bmatrix},\quad
		\mathbb{Z}(\cdot)\coloneqq\begin{bmatrix}n(\cdot)\\\bar{\check{\phi}}(\cdot)\end{bmatrix},\quad
\mathbb{U}(\cdot,e)\coloneqq\begin{bmatrix}r(\cdot,e)\\0\end{bmatrix},
\end{equation}
then \eqref{adjoint equation hmn leader}-\eqref{optimal state equation leader check X} is written as
\begin{equation}\label{hamiltonian high dim leader}
	\left\{
	\begin{aligned}
		d\mathbb{X}(t)&=\bigl\{\mathbb{A}_1(t)\mathbb{X}(t)+\mathbb{A}_2(t)\mathbb{Y}(t)\bigr\}dt+\mathbb{A}_4(t)d\bar{\check{U}}^c(t)+\int_{E_2}\mathbb{A}_5(t,e)\tilde{N}_2(de,dt),\\
		-d\mathbb{Y}(t)&=\bigl\{\mathbb{A}_6(t)\mathbb{X}(t)+\mathbb{A}_1(t)^\top\mathbb{Y}(t)\bigr\}dt
        -\mathbb{Z}(t)d\bar{\check{U}}^c(t)-\int_{E_2}\mathbb{U}(t,e)\tilde{N}_2(de,dt),\\
		\mathbb{X}(0)&=\begin{bmatrix}x_0\\0\end{bmatrix},\quad\mathbb{Y}(T)=\begin{bmatrix}\theta_2\bar{\check{X}}(T)+\bar{\lambda}_2\\\bar{\lambda}_1\end{bmatrix},
	\end{aligned}
	\right.
\end{equation}
where we have denoted
\begin{equation*}
	\begin{aligned}
		&\mathbb{A}_1(\cdot)\coloneqq\begin{bmatrix}
			A-B_1R_1^{-1}B_1^\top P& -B_2R_2^{-1}B_2^\top P \\
			0& A-B_1R_1^{-1}B_1^\top P
		\end{bmatrix},\quad\mathbb{A}_2(\cdot)\coloneqq\begin{bmatrix}
			-B_2R_2^{-1}B_2^\top& -B_1R_1^{-1}B_1^\top \\
			-B_1R_1^{-1}B_1^\top&0
		\end{bmatrix},\\
		&\mathbb{A}_4(\cdot)\coloneqq\begin{bmatrix}
			\Xi_1H_2^\top (K_2^{-1})^\top+C_2\\
			0
		\end{bmatrix},\quad
        \mathbb{A}_5(\cdot)\coloneqq\begin{bmatrix}
			D_2\\
			0
		\end{bmatrix},\quad
		\mathbb{A}_6(\cdot)\coloneqq\begin{bmatrix}
			0&0  \\
			0&-PB_2R_2^{-1}B_2^\top P
		\end{bmatrix}.
	\end{aligned}
\end{equation*}

In order to obtain the state feedback form of the optimal control, the following result is needed to present the decoupled relation among \eqref{X Y Z}.
\begin{lemma}
	Let Assumption \ref{assumption state} hold. Then $(\mathbb{X}(\cdot),\mathbb{Y}(\cdot),\mathbb{Z}(\cdot),\mathbb{U}(\cdot,\cdot))$ admits the following decoupled relations:
	\begin{equation}\label{relation X and Y}
		\left\{
		\begin{aligned}
			\mathbb{Y}(t)&=\alpha_1(t)\mathbb{X}(t)+\alpha_2(t),\\
			\mathbb{Z}(t)&=\alpha_1(t)\mathbb{A}_4(t),\\
			\mathbb{U}(t,e)&=\alpha_1(t)\mathbb{A}_5(t,e),\quad t\in[0,T],\quad e\in E_2,
		\end{aligned}
		\right.
	\end{equation}
where $(\alpha_1(\cdot),\alpha_2(\cdot))$ is the unique solution to
\begin{equation}\label{equation alpha_1,alpha_2}
	\left\{
	\begin{aligned}
		-d\alpha_1(t)&=\bigl\{\mathbb{A}_1(t)^\top\alpha_1(t)+\alpha_1(t)\mathbb{A}_1(t)+\alpha_1(t)\mathbb{A}_2(t)\alpha_1(t)+\mathbb{A}_6(t)\bigr\}dt,\\
		-d\alpha_2(t)&=\big[\alpha_1(t)\mathbb{A}_2(t)+\mathbb{A}_1(t)^{\top}\big]\alpha_2(t)dt,\\
		\alpha_1(T)&=\begin{bmatrix}
			\theta_2&0\\
			0&0
		\end{bmatrix},\quad\alpha_2(T)=\begin{bmatrix}\bar{\lambda}_2\\\bar{\lambda}_1\end{bmatrix}.
	\end{aligned}
	\right.
\end{equation}
\end{lemma}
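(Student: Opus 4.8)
The plan is to take the ODE system \eqref{equation alpha_1,alpha_2} as given---that is what the statement asserts to be uniquely solvable---and to verify the decoupling relations \eqref{relation X and Y} directly, by applying It\^o's formula to $\Gamma(t):=\mathbb{Y}(t)-\alpha_1(t)\mathbb{X}(t)-\alpha_2(t)$ and then invoking uniqueness for linear BSDEPs. Observe first that, since $\mathbb{A}_1,\mathbb{A}_2,\mathbb{A}_6$ and the terminal data in \eqref{equation alpha_1,alpha_2} are deterministic, $\alpha_1$ and $\alpha_2$ are deterministic; in particular, once $\alpha_1$ is known, $\alpha_2$ solves a linear terminal-value ODE, so the substantive part of the existence claim is the solvability on $[0,T]$ of the symmetric matrix Riccati equation for $\alpha_1$.

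Next I compute $d\Gamma$. Because $\alpha_1,\alpha_2$ are deterministic and enter affinely, there is no quadratic-variation correction; substituting the forward dynamics of $\mathbb{X}$ from \eqref{hamiltonian high dim leader} and then writing $\mathbb{Y}=\Gamma+\alpha_1\mathbb{X}+\alpha_2$, the drift of $d\Gamma$ splits into a term linear in $\mathbb{X}$ with coefficient $\dot{\alpha}_1+\mathbb{A}_1^{\top}\alpha_1+\alpha_1\mathbb{A}_1+\alpha_1\mathbb{A}_2\alpha_1+\mathbb{A}_6$, a term $-(\mathbb{A}_1^{\top}+\alpha_1\mathbb{A}_2)\Gamma$, and a constant term $\dot{\alpha}_2+(\mathbb{A}_1^{\top}+\alpha_1\mathbb{A}_2)\alpha_2$; the first and the third vanish by the two equations in \eqref{equation alpha_1,alpha_2} (indeed, \eqref{equation alpha_1,alpha_2} is exactly what one must impose to annihilate them). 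Hence $\Gamma$ solves the linear BSDEP $-d\Gamma(t)=(\mathbb{A}_1(t)^{\top}+\alpha_1(t)\mathbb{A}_2(t))\Gamma(t)\,dt-(\mathbb{Z}(t)-\alpha_1(t)\mathbb{A}_4(t))\,d\bar{\check{U}}^c(t)-\int_{E_2}(\mathbb{U}(t,e)-\alpha_1(t)\mathbb{A}_5(t,e))\,\tilde{N}_2(de,dt)$, and inserting the block definitions of $\mathbb{X}(T)$ and $\mathbb{Y}(T)$ from \eqref{X Y Z}--\eqref{hamiltonian high dim leader} together with $\alpha_1(T)=\begin{bmatrix}\theta_2&0\\0&0\end{bmatrix}$, $\alpha_2(T)=\begin{bmatrix}\bar{\lambda}_2\\\bar{\lambda}_1\end{bmatrix}$ gives $\Gamma(T)=0$ (the vanishing $(2,2)$-block of $\alpha_1(T)$ is precisely what kills the $h(T)$-contribution).

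Therefore $(\Gamma,\,\mathbb{Z}-\alpha_1\mathbb{A}_4,\,\mathbb{U}-\alpha_1\mathbb{A}_5)$ is an adapted, square-integrable solution of a linear BSDEP with bounded generator, zero source and zero terminal condition, so uniqueness for linear BSDEPs forces $\Gamma\equiv0$, $\mathbb{Z}=\alpha_1\mathbb{A}_4$ and $\mathbb{U}=\alpha_1\mathbb{A}_5$ ($dt\otimes d\mathbb{P}$- and $dt\otimes\nu_2(de)\otimes d\mathbb{P}$-a.e.), which is exactly \eqref{relation X and Y}; this is also what is needed afterwards to express the leader's optimal control as a feedback of the observable state.

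The step I expect to be the main obstacle is establishing that the Riccati equation for $\alpha_1$ in \eqref{equation alpha_1,alpha_2} admits a global solution on $[0,T]$: although $\alpha_1$ remains symmetric (since $\mathbb{A}_2$ and $\mathbb{A}_6$ are symmetric and $\alpha_1(T)$ is), its quadratic coefficient $\mathbb{A}_2$ is \emph{indefinite}---a vanishing $(2,2)$-block and off-diagonal blocks $-B_1R_1^{-1}B_1^{\top}$---so the monotonicity arguments underlying the usual a priori bounds for LQ Riccati equations are unavailable, and one must either postulate its solvability or derive it under extra structural conditions on the coefficients. A minor but necessary check is the block bookkeeping when passing from $\mathbb{Z}=\alpha_1\mathbb{A}_4$ and $\mathbb{U}=\alpha_1\mathbb{A}_5$ to the components $(n,\bar{\check{\phi}})$ and $(r,0)$ in \eqref{X Y Z}: since $\bar{\check{\varphi}}$ carries no jump in \eqref{optimal state equation leader check X}, the second block of $\alpha_1\mathbb{A}_5$ must indeed vanish, and the analogous consistency should be recorded for the diffusion blocks.
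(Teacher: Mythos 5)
Your verification is correct and takes essentially the same route as the paper: the paper likewise obtains $(\alpha_1,\alpha_2)$ from classical Riccati/ODE theory (citing Wonham) and then asserts that the relations \eqref{relation X and Y} ``follow easily from It\^o's formula,'' which is precisely your computation for $\Gamma(t)=\mathbb{Y}(t)-\alpha_1(t)\mathbb{X}(t)-\alpha_2(t)$ together with uniqueness for the resulting linear BSDEP with zero terminal value, spelled out in detail. The one point you leave open --- global solvability of the $\alpha_1$ equation, which is part of the lemma's statement --- is handled in the paper only by the one-line appeal to Wonham, so your caution about the indefinite quadratic coefficient $\mathbb{A}_2$ is a fair observation about the paper's own brevity rather than a flaw in your argument.
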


\begin{proof}
	According to classical Riccati equation theory in Wonham \cite{Wonham68}, \eqref{equation alpha_1,alpha_2} admits a unique solution $(\alpha_1(\cdot),\alpha_2(\cdot))$. Then the decoupled relation \eqref{relation X and Y} follows easily from It\^o's formula.
\end{proof}

Set
\begin{equation*}
	\begin{aligned}
		&\alpha_1\coloneqq\begin{bmatrix}
			\alpha_{1,1}&\alpha_{1,2}\\
			\alpha_{1,3}&\alpha_{1,4}
		\end{bmatrix},\quad\alpha_2\coloneqq\begin{bmatrix}
		\alpha_{2,1}\\
		\alpha_{2,2}
	\end{bmatrix},
	\end{aligned}
\end{equation*}
where $\alpha_{1,1}$, $\alpha_{1,2}$, $\alpha_{1,3}$ and $\alpha_{1,4}$ are $\mathbb{R}^{n\times n}$-valued, $\alpha_{2,1}$ and $\alpha_{2,2}$ are $\mathbb{R}^n$-valued. Then we have the following result.
\begin{theorem}
	Let Assumption \ref{assumption state} hold. Let $(\alpha_1(\cdot),\alpha_2(\cdot))$ be the solution of \eqref{equation alpha_1,alpha_2}. Then problem $A_L(\theta_2,\lambda_2)$ admits a unique optimal control
\begin{equation}\label{equation bar u_2 Theorem}
	\bar{u}_2(t)=-R_2^{-1}(t)B_2(t)^\top\Big\{\alpha_{1,1}(t)\check{X}^*(t)+\big[\alpha_{1,2}(t)+P(t)\big]h^*(t)+\alpha_{2,1}(t)\Big\},
\end{equation}
where $\mathbb{X}^*(\cdot)\coloneqq\begin{bmatrix}\check{X}^*(\cdot)\\h^*(\cdot)\end{bmatrix}$ is the solution of
\begin{equation}\label{equation X*}
	\left\{
	\begin{aligned}
		d\mathbb{X}^*(t)&=\bigl\{\big[\mathbb{A}_1(t)+\mathbb{A}_2(t)\alpha_1(t)\big]\mathbb{X}^*(t)+\mathbb{A}_2(t)\alpha_2(t)\bigr\}dt\\
		&\qquad +\mathbb{A}_4(t)d\bar{\check{U}}^c(t)+\int_{E_2}\mathbb{A}_5(t,e)\tilde{N}_2(de,dt),\\
		\mathbb{X}^*(0)&=\begin{bmatrix}x_0\\0\end{bmatrix},
	\end{aligned}
	\right.
\end{equation}
and
\begin{equation}\label{relations leader}
\left\{
	\begin{aligned}
		\bar{\check{X}}(t)&=\check{X}^*(t),\quad h(t)=h^*(t),\quad m(t)=\alpha_{1,1}(t)\check{X}^*(t)+\alpha_{1,2}h^*(t)+\alpha_{2,1}(t),\\
		\bar{\check{\varphi}}(t)&=\alpha_{1,3}(t)\check{X}^*(t)+\alpha_{1,3}h^*(t)+\alpha_{2,2}(t),\\
        n(t)&=\alpha_{1,1}(t)\big[\Xi_1(t)H_2(t)^\top K_2^{-1}(t)^\top+C_2(t)\big],\\
		\bar{\check{\phi}}(t)&=\alpha_{1,3}(t)\big[\Xi_1(t)H_2(t)^\top K_2^{-1}(t)^\top+C_2(t)\big],\quad r(t,e)=\alpha_{1,1}(t)D_2(t,e).
	\end{aligned}
\right.
\end{equation}
\end{theorem}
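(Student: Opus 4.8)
The plan is to assemble the optimal control from the pieces already in place: the sufficiency result of Lemma~\ref{lemma suffi} reduces everything to exhibiting an admissible $\bar u_2$ satisfying the maximum condition \eqref{maximum condition leader-SMP} together with a solution of the coupled FBSDEP \eqref{optimal state equation leader check X}, and the decoupling Lemma produces exactly such a solution in feedback form. So first I would substitute the block decomposition $\alpha_1=\begin{bmatrix}\alpha_{1,1}&\alpha_{1,2}\\\alpha_{1,3}&\alpha_{1,4}\end{bmatrix}$, $\alpha_2=\begin{bmatrix}\alpha_{2,1}\\\alpha_{2,2}\end{bmatrix}$ into the relation $\mathbb Y(t)=\alpha_1(t)\mathbb X(t)+\alpha_2(t)$ from \eqref{relation X and Y}, and read off the top block, which gives $m(t)=\alpha_{1,1}(t)\bar{\check X}(t)+\alpha_{1,2}(t)h(t)+\alpha_{2,1}(t)$, and the bottom block, which gives $\bar{\check\varphi}(t)=\alpha_{1,3}(t)\bar{\check X}(t)+\alpha_{1,4}(t)h(t)+\alpha_{2,2}(t)$. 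Similarly the second and third lines of \eqref{relation X and Y}, using the explicit forms of $\mathbb A_4$ and $\mathbb A_5$, yield the formulas for $n(t)$, $\bar{\check\phi}(t)$ and $r(t,e)$ claimed in \eqref{relations leader}.

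Next I would plug $m(t)=\alpha_{1,1}(t)\bar{\check X}(t)+\alpha_{1,2}(t)h(t)+\alpha_{2,1}(t)$ into the maximum condition \eqref{maximum condition leader-SMP}, i.e. $B_2^\top m+B_2^\top Ph+R_2\bar u_2=0$, and solve for $\bar u_2$ (legitimate since $R_2(\cdot)>0$ by Assumption~\ref{assumption state}(3)), obtaining
\begin{equation*}
	\bar u_2(t)=-R_2^{-1}(t)B_2(t)^\top\big\{\alpha_{1,1}(t)\bar{\check X}(t)+[\alpha_{1,2}(t)+P(t)]h(t)+\alpha_{2,1}(t)\big\}.
\end{equation*}
Substituting this feedback, together with $\mathbb Y=\alpha_1\mathbb X+\alpha_2$, into the forward equation of \eqref{hamiltonian high dim leader} closes the loop: the drift becomes $[\mathbb A_1+\mathbb A_2\alpha_1]\mathbb X+\mathbb A_2\alpha_2$, which is precisely the linear SDEP \eqref{equation X*} for $\mathbb X^*(\cdot)=\begin{bmatrix}\check X^*(\cdot)\\h^*(\cdot)\end{bmatrix}$. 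This linear equation with bounded coefficients (the boundedness of $\Xi_1$ entering $\mathbb A_4$, and of $\alpha_1,\alpha_2$ from Wonham's theory) has a unique square-integrable solution, and the identification $\bar{\check X}=\check X^*$, $h=h^*$, etc., in \eqref{relations leader} then follows by uniqueness of the decoupled system.

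Finally I would verify admissibility of $\bar u_2$ — it is $\mathcal Y_2^{u_2}$-adapted because $\check X^*$, $h^*$ and the deterministic $\alpha$'s and $P$ are, and it is integrable by the estimate on $\mathbb X^*$ — and then invoke Lemma~\ref{lemma suffi} to conclude that $\bar u_2$ given by \eqref{equation bar u_2 Theorem} is the unique optimal control of $A_L(\theta_2,\lambda_2)$. The one point requiring a little care, which I would flag as the main obstacle, is the consistency of the \emph{terminal} conditions: \eqref{equation alpha_1,alpha_2} sets $\alpha_1(T)=\begin{bmatrix}\theta_2&0\\0&0\end{bmatrix}$ and $\alpha_2(T)=\begin{bmatrix}\bar\lambda_2\\\bar\lambda_1\end{bmatrix}$, so the top block of $\mathbb Y(T)=\alpha_1(T)\mathbb X(T)+\alpha_2(T)$ reads $m(T)=\theta_2\bar{\check X}(T)+\bar\lambda_2$ and the bottom block reads $\bar{\check\varphi}(T)=\bar\lambda_1$, matching \eqref{hamiltonian high dim leader} (hence \eqref{adjoint equation hmn leader} and \eqref{optimal state equation leader check X}) exactly; since $\bar\lambda_2=1-\theta_2\mathbb E[\check X^{\bar u_2}_T]$ by Lemma~\ref{lemma emb2} this is the self-consistent fixed point that pins down $\bar\lambda_2$, and one should note the embedding relation makes the construction coherent a posteriori rather than being an extra hypothesis.
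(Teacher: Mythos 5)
Your proposal is correct and follows essentially the same route as the paper: use the decoupled relation \eqref{relation X and Y} with the block structure of $(\alpha_1,\alpha_2)$ to express $(m,\bar{\check{\varphi}},n,\bar{\check{\phi}},r)$ in feedback form, solve the maximum condition \eqref{maximum condition leader-SMP} for $\bar{u}_2$ (using $R_2>0$), close the forward equation to get \eqref{equation X*}, and conclude optimality and uniqueness via Lemma \ref{lemma suffi} together with the embedding Lemma \ref{lemma emb2}. The only difference is one of direction — the paper substitutes the candidate $\bar{u}_2$ and verifies that the processes in \eqref{relations leader} solve \eqref{state equation leader check X}, \eqref{state equation leader check varphi} and \eqref{adjoint equation hmn leader}, whereas you derive the same relations constructively — and your added remarks on terminal-condition consistency and the a posteriori determination of $\bar{\lambda}_2$ are sound refinements, not deviations.
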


\begin{proof}
	Substituting $\bar{u}_2$ defined in \eqref{equation bar u_2 Theorem} into \eqref{state equation leader check X} and \eqref{state equation leader check varphi}, it is easy to check that $(\bar{\check{X}}(\cdot),h(\cdot),\bar{\check{\varphi}}(\cdot),\\
	\bar{\check{\phi}}(\cdot),m(\cdot),n(\cdot),r(\cdot,\cdot))$ defined in \eqref{relations leader} solves  \eqref{state equation leader check X}, \eqref{state equation leader check varphi} and \eqref{adjoint equation hmn leader}. Thus $\bar{u}_2$ satisfies \eqref{maximum condition leader-SMP}. By Lemma \ref{lemma suffi} and Lemma \ref{lemma emb2}, $\bar{u}_2$ is the unique optimal control of $A_L(\theta_2,\lambda_2)$, as well as $P_F(\theta_2)$. From \eqref{decom J2}, noting that $\tilde{J}_2$, $\tilde{\tilde{X}}$ and $\tilde{\tilde{\varphi}}$ are independent of $u_2$, we obtain that $\bar{u}_2$ is the optimal control of the leader's problem.
\end{proof}

\section{Concluding remarks}\label{Sec Conclu}

In this paper, we have studied a Stackelberg differential game for partially observed linear stochastic system with mean-variance criteria, where randomness comes from Brownian motions and Poisson random measures. Following the orthogonal decomposition technique developed in \cite{SX23} and the embedding method in \cite{ZL00}, the original problems were transformed into several fully observed SLQ problems. State feedback representation of open-loop Stgackelberg equilibria was obtained via some Riccati equations. The orthogonal decomposition technique heavily depends on the linear and non-linear filtering with Brownian motions and Poisson random measures, which we have investigated in Appendix \ref{Sec Filter}.

\appendix

\setcounter{equation}{0}
\renewcommand{\theequation}{A.\arabic{equation}}

\section{Appendix: Non-linear filtering}\label{Sec Filter}

In this appendix section, we investigate the linear and non-linear stochastic filtering with Poisson random measure, which will play a crucial role in the main parts of this paper. On a given filtered probability space $(\Omega,\mathcal{F},\mathbb{F},\mathbb{P})$, $W_1$ and $W_2$ are two standard independent Brownian motions, with values in $\mathbb{R}^d$ and $\mathbb{R}^k$, respectively. $\tilde{N}_1(de,dt)$ and $\tilde{N}_2(de,dt)$ are standard 1-dimensional Poisson martingale measures. Some similar definitions and notations are omitted as in Section \ref{Sec Problem formulation}.

Let us consider the following partially observed stochastic system:
\begin{equation}\label{SDEP appendix}
	\left\{
	\begin{aligned}
		dx(t)&=a(t)dt+\sum_{i=1}^2\sigma_i(t)dW_i(t)+\sum_{i=1}^2\int_{E_i}\varpi_i(t,e)\tilde{N}_i(de,dt),\\
		dY(t)&=H(t)dt+K(t)dW_1(t)+\int_{E_1}c(t,e)\tilde{N}_1(de,dt),\\
		x(0)&=x_0,\quad Y(0)=0,
	\end{aligned}
	\right.
\end{equation}
where $\{x(t)\in\mathbb{R}^n;0\leqslant t\leqslant T\}$ is the $\mathbb{R}^n$-valued unobservable signal process, and $\{Y(t)\in\mathbb{R};0\leqslant t\leqslant T\}$ is the observable process with values in $\mathbb{R}$.

Let $\mathcal{Y}_t\coloneqq\sigma(Y(s);0\leqslant s\leqslant t),0\leqslant t\leqslant T$. The problem of optimal filtering for a partially observable system is to derive the equation satisfied by the non-linear filtering prosess $\mathbb{E}[x(t)\vert\mathcal{Y}_t]$. We need the following assumption.
\begin{assumption}\label{assumption nonlinear filtering}
	\begin{enumerate}[\bfseries (1)]
		\item $a:[0,T]\times\Omega\to\mathbb{R}^n$, $\sigma_i:[0,T]\times\Omega\to\mathbb{R}^n$, $i=1,2$, $\varpi_i:E_i\times[0,T]\times\Omega\to\mathbb{R}^n$, $i=1,2$,  $H:[0,T]\times\Omega\to\mathbb{R}$, $K:[0,T]\to\mathbb{R}$ and $c:E_1\times[0,T]\to\mathbb{R}$ are measurable, $\mathbb{F}$-adapted, and
		\begin{equation*}
			\begin{aligned}
				&\mathbb{E}\int_0^T\left\{|a(t)|+\sum_{i=1}^2|\sigma_i(t)|^2+\sum_{i=1}^2|\varpi_1(t,e)|^2\nu_i(de)\right.\\
				&\qquad\quad +|H(t)|+|K(t)|^2+|c(t,e)|^2\nu_1(de)\Bigg\}dt<\infty;
			\end{aligned}
		\end{equation*}
	\item $K(t)$ is invertible and $K^{-1}(t)$, is bounded on $[0,T]$; $c(t,e)$ is invertible and $c^{-1}(t,e)$, is bounded on $[0,T]\times E_1$.
	\end{enumerate}
\end{assumption}

\begin{proposition}
	Let $x\equiv\{x(t);0\leqslant t\leqslant T\}$ be a measurable process. There exists a unique (up to indistinguishability) optional process $\hat{x}\equiv\{\hat{x}(t);0\leqslant t\leqslant T\}$ such that, for all stopping time $\tau$,
	\begin{equation*}
		\hat{x}(\tau)\mathbbm{1}_{(\tau<\infty)}=\mathbb{E}[x(\tau)\mathbbm{1}_{(\tau<\infty)}\vert\mathcal{Y}_{\tau}],\quad\mathbb{P}\mbox{-}a.s.
	\end{equation*}
\end{proposition}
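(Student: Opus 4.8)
The plan is to recognize this as the classical \emph{optional projection theorem}, specialized to the observation filtration $\mathcal{Y}=\{\mathcal{Y}_t\}_{0\leqslant t\leqslant T}$, which I take to be augmented (completed and made right-continuous), so that the usual conditions hold; this is harmless, since augmenting $\mathcal{Y}$ does not affect any of the conditional expectations in the statement. The argument splits into a uniqueness part, handled by a section argument, and an existence part, handled by a functional monotone class argument built on the c\`adl\`ag modification of a martingale.

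For uniqueness, suppose $\hat{x}^1$ and $\hat{x}^2$ are two optional processes satisfying the stated identity. Evaluating the identity at an arbitrary stopping time $\tau$ gives $\hat{x}^1(\tau)\mathbbm{1}_{(\tau<\infty)}=\hat{x}^2(\tau)\mathbbm{1}_{(\tau<\infty)}$, $\mathbb{P}$-a.s. The set $\Gamma:=\{(t,\omega):\hat{x}^1(t,\omega)\neq\hat{x}^2(t,\omega)\}$ is optional, and if it were not evanescent the optional section theorem would furnish a stopping time $\tau$ with graph contained in $\Gamma$ and $\mathbb{P}(\tau<\infty)>0$, contradicting the previous sentence; hence $\Gamma$ is evanescent and $\hat{x}^1,\hat{x}^2$ are indistinguishable.

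For existence I would first treat bounded $x$ and then remove the boundedness by truncation. Let $\mathcal{H}$ be the collection of bounded $\mathcal{B}([0,T])\otimes\mathcal{F}$-measurable processes admitting an optional process with the required section property. One checks that $\mathcal{H}$ is a vector space containing the constant processes and, using monotone convergence for conditional expectations together with the section theorem to glue limits, that $\mathcal{H}$ is stable under bounded monotone limits. As generating multiplicative class I take the processes $x(t,\omega)=f(t)\xi(\omega)$ with $f$ bounded Borel and $\xi$ bounded $\mathcal{F}$-measurable: let $M$ be the c\`adl\`ag modification of the martingale $t\mapsto\mathbb{E}[\xi\,\vert\,\mathcal{Y}_t]$ (available because $\mathcal{Y}$ satisfies the usual conditions) and set $\hat{x}(t):=f(t)M(t)$, which is optional since a deterministic Borel function of $t$ is optional and $M$ is c\`adl\`ag adapted. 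For any $\mathcal{Y}$-stopping time $\tau$, the optional stopping theorem gives $M(\tau)\mathbbm{1}_{(\tau<\infty)}=\mathbb{E}[\xi\,\vert\,\mathcal{Y}_\tau]\mathbbm{1}_{(\tau<\infty)}$, and since $f(\tau)\mathbbm{1}_{(\tau<\infty)}$ is $\mathcal{Y}_\tau$-measurable,
\[
\hat{x}(\tau)\mathbbm{1}_{(\tau<\infty)}=f(\tau)\mathbb{E}[\xi\,\vert\,\mathcal{Y}_\tau]\mathbbm{1}_{(\tau<\infty)}=\mathbb{E}\big[f(\tau)\xi\,\mathbbm{1}_{(\tau<\infty)}\,\big\vert\,\mathcal{Y}_\tau\big]=\mathbb{E}\big[x(\tau)\mathbbm{1}_{(\tau<\infty)}\,\big\vert\,\mathcal{Y}_\tau\big].
\]
Thus this class lies in $\mathcal{H}$, and the functional monotone class theorem yields that $\mathcal{H}$ contains every bounded $\mathcal{B}([0,T])\otimes\mathcal{F}$-measurable process. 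Finally, for a general measurable $x$ (with enough integrability that the right-hand side is meaningful, e.g.\ $x\geqslant0$, or $\mathbb{E}[|x(\tau)|\mathbbm{1}_{(\tau<\infty)}]<\infty$ for every $\tau$), apply the bounded case to the truncations $(x\wedge k)\vee(-k)$ and pass to the limit, invoking the section theorem once more to assemble the pointwise limits into a single optional process.

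The main obstacle is not any one computation but the reliance on the \emph{optional section theorem}, the genuinely deep ingredient here: it powers both the uniqueness argument and the identification of the monotone and truncation limits. Granting it, the delicate bookkeeping is checking that $\mathcal{H}$ is genuinely closed under bounded monotone limits — one must extract, from the sequence $(\hat{x}^n)$, a single optional process satisfying the section identity simultaneously at \emph{every} stopping time — whereas everything else (the c\`adl\`ag martingale modification, optional stopping, and the monotone class step) is standard.
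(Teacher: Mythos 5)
The paper offers no proof of this proposition at all: it is stated as a known fact --- the classical optional projection theorem specialized to the observation filtration $\{\mathcal{Y}_t\}$ --- and the text immediately moves on to the remark about why the optional projection (rather than the raw family $\mathbb{E}[x(t)\vert\mathcal{Y}_t]$) is the right object. Your proposal therefore does strictly more than the paper: you reconstruct the standard textbook proof, with uniqueness via the optional section theorem and existence via a functional monotone class argument whose generating class $f(t)\xi(\omega)$ is projected using the c\`adl\`ag modification of the martingale $\mathbb{E}[\xi\vert\mathcal{Y}_t]$ together with optional sampling. This argument is correct, and your two caveats are well placed: one does need the usual conditions on $\{\mathcal{Y}_t\}$ (the appendix defines $\mathcal{Y}_t$ as the raw natural filtration, so the harmless augmentation you mention is genuinely needed to invoke the c\`adl\`ag modification and section theorem), and one needs some integrability or positivity of $x$ for the conditional expectations to make sense, which the paper's blanket phrase ``measurable process'' glosses over but which is satisfied by the square-integrable signal processes actually used later. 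The only step you rightly flag as delicate --- gluing the monotone limits $\hat{x}^n$ into a single optional process satisfying the identity at every stopping time --- is handled exactly as you indicate, by applying the section theorem to the optional set where monotonicity fails and then taking $\limsup_n\hat{x}^n$; so there is no gap, merely a difference in level of detail between your self-contained proof and the paper's citation-level treatment.
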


\begin{remark}
	We remark that $\hat{x}(t)=\mathbb{E}[x(t)\vert\mathcal{Y}_t]$, $\mathbb{P}$-a.s., for each fixed time $t$, but the null set depends in $t$. Therefore were we simply to write as a process $\{\mathbb{E}[x(t)\vert\mathcal{Y}_t]; t\geqslant0\}$ instead of $\hat{x}$, it would not be uniquely determined almost surely. This is why we use the optional projection. Hereafter, we shall use the notation $\hat{x}\equiv\{\hat{x}(t);0\leqslant t\leqslant T\}$ for the optional projection of a process $x\equiv\{x(t);0\leqslant t\leqslant T\}$, and use $\tilde{x}\equiv\{\tilde{x}(t); 0\leqslant t\leqslant T\}$ for the difference $x-\hat{x}$.
\end{remark}

Now we are going to  establish the filtering equation for $\hat{x}(t)=\mathbb{E}[x(t)\vert\mathcal{Y}_t]$, where $x$ satisfies the SDEP \eqref{SDEP appendix}. Let $V\equiv\{V(t);0\leqslant t\leqslant T\}$ be the innovation process defined by
\begin{equation}\label{innovation}
		V(t)\coloneqq Y(t)-\int_0^t\hat{H}(s)ds=V^c(t)+V^d(t),
\end{equation}
where $V^c(t)\coloneqq\int_0^t\tilde{H}(s)ds+\int_0^tK(s)dW_1(s)$ is the continuous part of the innovation process $V$, and $V^d(t)\coloneqq\int_0^t\int_{E_1}c(s,e)\tilde{N}_1(de,ds)$ is its pure jump part.

First we have the following result.
\begin{lemma}\label{lemma check V 3.1}
	Let Assumption \ref{assumption nonlinear filtering} hold. The process $\check{V}^c\equiv\{\check{V}^c(t);0\leqslant t\leqslant T\}$, defined by
\begin{equation}\label{innovation continuous part}
	\check{V}^c(t)\coloneqq\int_0^tK^{-1}(s)dV^c(s),
\end{equation}
is a standard $\{\mathcal{Y}_t\}$-Brownian motion in $\mathbb{R}^d$.
\end{lemma}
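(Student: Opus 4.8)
The plan is to verify that $\check V^c$ satisfies the hypotheses of L\'evy's characterization theorem: it is a continuous $\{\mathcal{Y}_t\}$-martingale started at $0$ with quadratic variation $\langle \check V^c\rangle_t = tI_d$. Continuity and the initial condition are immediate from the definition \eqref{innovation continuous part}, since $V^c$ is continuous and $K^{-1}(\cdot)$ is bounded and deterministic. So the two substantive points are (i) that $\check V^c$ is a $\{\mathcal{Y}_t\}$-martingale, and (ii) the computation of its quadratic variation.

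For (ii), once the martingale property is in hand, the quadratic variation follows by a direct computation: from $\check V^c(t)=\int_0^t K^{-1}(s)\,dV^c(s)$ and $V^c(t)=\int_0^t \tilde H(s)\,ds+\int_0^t K(s)\,dW_1(s)$, the finite-variation drift does not contribute to the bracket, so $d\langle \check V^c\rangle_t = K^{-1}(t)K(t)K(t)^\top (K^{-1}(t))^\top\,dt = I_d\,dt$, using that the continuous martingale part of $V^c$ is $\int_0^t K(s)\,dW_1(s)$ and $W_1$ is a standard $\mathbb{R}^d$-valued Brownian motion. This is routine.

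The main obstacle is (i), the martingale property of the innovation process relative to the \emph{observation} filtration $\{\mathcal{Y}_t\}$ rather than the larger filtration $\mathbb{F}$. The standard argument runs as follows: first show $V$ (equivalently $V^c$, after separating the jump part $V^d$, which is itself a $\{\mathcal{Y}_t\}$-martingale because $c(t,e)$ is $\mathcal{Y}_t$-measurable and $\tilde N_1$ compensates) is a $\{\mathcal{Y}_t\}$-martingale. For $s<t$ one writes $\mathbb{E}[V^c(t)-V^c(s)\mid \mathcal{Y}_s]$, inserts the definition $V^c(t)=Y^c(t)-\int_0^t \hat H(r)\,dr$ where $Y^c(t)=\int_0^t H(r)\,dr+\int_0^t K(r)\,dW_1(r)$, uses the $\mathbb{F}$-martingale property of $\int K\,dW_1$ together with the tower property $\mathbb{E}[\,\cdot\mid\mathcal{Y}_s]=\mathbb{E}[\mathbb{E}[\,\cdot\mid\mathcal{F}_r]\mid\mathcal{Y}_s]$, and crucially invokes the identity $\mathbb{E}\big[\int_s^t H(r)\,dr\,\big|\,\mathcal{Y}_s\big]=\mathbb{E}\big[\int_s^t \hat H(r)\,dr\,\big|\,\mathcal{Y}_s\big]$, which holds because $\hat H(r)=\mathbb{E}[H(r)\mid\mathcal{Y}_r]$ and a Fubini/tower argument. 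This is the classical innovations-process lemma (cf. Liptser--Shiryaev, or Situ \cite{Situ05}); one must be slightly careful that the integrability in Assumption \ref{assumption nonlinear filtering}(1) justifies the Fubini exchange and that the optional projection $\hat H$ is well-defined, which the preceding proposition guarantees. Then $\check V^c(t)=\int_0^t K^{-1}(s)\,dV^c(s)$ inherits the $\{\mathcal{Y}_t\}$-martingale property as a stochastic integral of a bounded deterministic integrand against the $\{\mathcal{Y}_t\}$-martingale $V^c$, and L\'evy's theorem closes the argument.

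Finally, I would note that the $\mathbb{R}^d$-valued structure requires only that one run the bracket computation componentwise; no new difficulty arises from the multidimensionality. The whole proof is short modulo citing the innovations lemma, so in the writeup I would state the three conditions of L\'evy's theorem, dispatch continuity and the bracket in one line each, and spend the bulk of the argument on the $\{\mathcal{Y}_t\}$-martingale property of $V^c$.
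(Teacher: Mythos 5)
Your proposal is correct and follows essentially the same route as the paper: establish that the innovation process (hence $\check V^c$) is a $\{\mathcal{Y}_t\}$-martingale via the tower-property/innovations argument, identify its quadratic variation as $t\,\mathbbm{1}_d$, and conclude by the L\'evy/martingale characterization of Brownian motion. The only differences are cosmetic — the paper argues directly on $\check V^c$ and verifies the bracket through $\mathbb{E}\big[\check V^c(t)\check V^c(t)^\top-\check V^c(s)\check V^c(s)^\top\big\vert\mathcal{Y}_s\big]=(t-s)\mathbbm{1}_d$, while you first treat $V^c$ and compute the bracket pathwise.
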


\begin{proof}
	Evidently, $\check{V}^c$ is a continuous, $\{\mathcal{Y}_t\}$-adapted, integrable process. Additionally, we have
\begin{equation*}
		d\check{V}^c(t)=K^{-1}(t)dV^c(t)=K^{-1}(t)\tilde{H}(t)dt+dW_1(t).
\end{equation*}
Thus, for $0\leqslant s<t\leqslant T$, we obtain
\begin{equation*}
	\check{V}^c(t)-\check{V}^c(s)=W_1(t)-W_1(s)+\int_s^tK^{-1}(r)\tilde{H}(r)dr.
\end{equation*}

Given that for $r\geqslant s$, $\tilde{H}(r)$ is independent of $\mathcal{Y}_s$,
\begin{equation*}
	\begin{aligned} \mathbb{E}\big[\check{V}^c(t)-\check{V}^c(s)\big\vert\mathcal{Y}_s\big]
    &=\mathbb{E}\Big[\mathbb{E}\big[W_1(t)-W_1(s)\big\vert\mathcal{F}_s\big]\Big\vert\mathcal{Y}_s\Big]+\int_s^tK^{-1}(r)\mathbb{E}\big[\tilde{H}(r)\big\vert\mathcal{Y}_s\big]dr\\
	&=\int_{s}^{t}K^{-1}(r)\mathbb{E}\big[\tilde{H}(r)\big]dr=0.
	\end{aligned}
\end{equation*}
This indicates that $\check{V}^c$ is a $\{\mathcal{Y}_t\}$-martingale. Further, it follows from It\^o's formula that
\begin{equation*}
	\begin{aligned}
		d\big[\check{V}^c(t)\check{V}^c(t)^\top\big]&=\check{V}^c(t)\big[K^{-1}(t)\tilde{H}(t)\big]^\top dt+\check{V}^c(t)dW_1(t)^\top\\
		&\quad +K^{-1}(t)\tilde{H}(t)\check{V}^c(t)^\top dt+[dW_1(t)]\check{V}^c(t)^\top+\mathbbm{1}_ddt,
	\end{aligned}
\end{equation*}
and hence
\begin{equation}\label{equation A4}
	\begin{aligned}
		&\check{V}^c(t)\check{V}^c(t)^\top-\check{V}^c(s)\check{V}^c(s)^\top=\left[\int_s^tK^{-1}(r)\tilde{H}(r)\check{V}^c(r)^\top dr\right]^\top+\int_s^t\check{V}^c(r)dW_1(r)^\top\\
		&\quad +\int_s^tK^{-1}(r)\tilde{H}(r)\check{V}^c(r)^\top dr+\left[\int_s^t\check{V}^c(r)dW_1(r)^\top\right]^\top+(t-s)\mathbbm{1}_d.
	\end{aligned}
\end{equation}
Since $\tilde{H}(r)$ is independent of $\mathcal{Y}_s$, for $s\leqslant r$, we have
\begin{equation*}
	\begin{aligned}
		&\mathbb{E}\left[\int_s^tK^{-1}(r)\tilde{H}(r)\check{V}^c(r)^\top dr\Big\vert\mathcal{Y}_s\right]=\int_s^tK^{-1}(r)\mathbb{E}\Big[\tilde{H}(r)\check{V}^c(r)^\top\Big\vert\mathcal{Y}_s\Big]dr\\
		&=\int_s^tK^{-1}(r)\mathbb{E}\Big[\mathbb{E}\Big[\tilde{H}(r)\check{V}^c(r)^\top\big\vert\mathcal{Y}_r\Big]\Big\vert\mathcal{Y}_s\Big]dr
		=\int_s^tK^{-1}(r)\mathbb{E}\Big[\mathbb{E}\big[\tilde{H}(r)\big]\check{V}^c(r)^\top\big\vert\mathcal{Y}_s\Big]dr=0.
	\end{aligned}
\end{equation*}
For the stochastic integral part, we have
\begin{equation*}
	\begin{aligned}
		\mathbb{E}\left[\int_s^t\check{V}^c(t)dW_1(r)^\top\Big\vert\mathcal{Y}_s\right]=\mathbb{E}\left[\mathbb{E}\left[\int_s^t\check{V}^c(r)dW_1(r)^\top\Big\vert\mathcal{F}_s\right]\Big\vert\mathcal{Y}_s\right]=0.
	\end{aligned}
\end{equation*}
By taking conditional expectations with respect to $\mathcal{Y}_s$ on both sides of \eqref{equation A4}, we get
\begin{equation*}
	\mathbb{E}\Big[\check{V}^c(t)\check{V}^c(t)^\top-\check{V}^c(s)\check{V}^c(s)^\top\big\vert\mathcal{Y}_s\Big]=(t-s)\mathbbm{1}_d.
\end{equation*}
This implies that the cross-variations are given by
\begin{equation*}
	\langle\check{V}^c_i,\check{V}^c_j\rangle_t=\delta_{ij}t;\qquad1\leqslant i,j\leqslant d.
\end{equation*}
Based on the martingale characterization of Brownian motion, it can be concluded that $\check{V}^c$ is a standard $d$-dimensional Brownian motion.
\end{proof}

Consider the following process
\begin{equation}\label{equation Lambda}
	\Lambda(t)\coloneqq\hat{x}(t)-\hat{x}_0-\int_{0}^{t}\hat{a}(s)ds.
\end{equation}
Ii is clearly that $\Lambda\equiv\{\Lambda(t);0\leqslant t\leqslant T\}$ is $\{\mathcal{Y}_t\}$-adapted and c\`adl\`ag, with $\Lambda(0)=0$ almost surely. We are going to prove that $\Lambda$ is a $\mathcal{Y}_t$-martingale.
\begin{lemma}
	The process $\Lambda=\{\Lambda(t);0\leqslant t\leqslant T\}$ defined by \eqref{equation Lambda} is a c\`adl\`ag, square-integrable $\mathcal{Y}_t$-martingale with $\Lambda(0)=0$, a.s.
\end{lemma}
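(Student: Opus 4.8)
The plan is to verify, in order, that $\Lambda$ is $\{\mathcal{Y}_t\}$-adapted and c\`adl\`ag, that $\Lambda(t)\in L^2(\Omega)$ for each $t$, and finally the martingale identity $\mathbb{E}[\Lambda(t)\mid\mathcal{Y}_s]=\Lambda(s)$ for all $0\leqslant s\leqslant t\leqslant T$; the equality $\Lambda(0)=0$ is immediate from \eqref{equation Lambda} since $x_0$ is deterministic and hence $\hat{x}_0=x_0$. Adaptedness is clear, because $\hat{x}$ is the optional projection relative to $\{\mathcal{Y}_t\}$ supplied by the preceding Proposition (which also yields a c\`adl\`ag version of $\hat{x}$), while $s\mapsto\int_0^s\hat{a}(r)\,dr$ is $\{\mathcal{Y}_t\}$-adapted and continuous, so $\Lambda$ is c\`adl\`ag. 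For square-integrability I would use Jensen's inequality for conditional expectations, giving $\mathbb{E}|\hat{x}(t)|^2\leqslant\mathbb{E}|x(t)|^2$, together with the standard second-moment estimate for the solution of \eqref{SDEP appendix} and an application of the conditional Fubini theorem and the integrability hypotheses of Assumption \ref{assumption nonlinear filtering} to control $\int_0^t\hat{a}(r)\,dr$.

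The heart of the proof is the martingale property. First I would write the signal as $x(t)=x_0+\int_0^t a(r)\,dr+M(t)$, where $M(t):=\sum_{i=1}^2\int_0^t\sigma_i(r)\,dW_i(r)+\sum_{i=1}^2\int_0^t\int_{E_i}\varpi_i(r,e)\tilde{N}_i(de,dr)$ is, by Assumption \ref{assumption nonlinear filtering}, a square-integrable $\mathbb{F}$-martingale null at $0$. Fix $0\leqslant s\leqslant t$. Since $\mathcal{Y}_s\subset\mathcal{Y}_t\subset\mathcal{F}_t$, the tower property gives $\mathbb{E}[\hat{x}(t)\mid\mathcal{Y}_s]=\mathbb{E}[x(t)\mid\mathcal{Y}_s]$ and $\mathbb{E}[\hat{x}(s)\mid\mathcal{Y}_s]=\hat{x}(s)=\mathbb{E}[x(s)\mid\mathcal{Y}_s]$, so that
\[
\mathbb{E}\big[\hat{x}(t)-\hat{x}(s)\mid\mathcal{Y}_s\big]=\mathbb{E}\Big[\int_s^t a(r)\,dr\,\Big\vert\,\mathcal{Y}_s\Big]+\mathbb{E}\big[M(t)-M(s)\mid\mathcal{Y}_s\big].
\]
The last term vanishes: conditioning first on $\mathcal{F}_s\supset\mathcal{Y}_s$ and using the $\mathbb{F}$-martingale property of $M$ gives $\mathbb{E}[M(t)-M(s)\mid\mathcal{Y}_s]=\mathbb{E}[\,\mathbb{E}[M(t)-M(s)\mid\mathcal{F}_s]\mid\mathcal{Y}_s]=0$; and by conditional Fubini $\mathbb{E}[\int_s^t a(r)\,dr\mid\mathcal{Y}_s]=\int_s^t\mathbb{E}[a(r)\mid\mathcal{Y}_s]\,dr$.

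To conclude I would compute $\mathbb{E}[\int_s^t\hat{a}(r)\,dr\mid\mathcal{Y}_s]$: conditional Fubini, followed by the tower property using $\mathcal{Y}_s\subset\mathcal{Y}_r$ for $r\geqslant s$ and $\hat a(r)=\mathbb{E}[a(r)\mid\mathcal{Y}_r]$ (a.s., for each $r$), gives
\[
\mathbb{E}\Big[\int_s^t\hat{a}(r)\,dr\,\Big\vert\,\mathcal{Y}_s\Big]=\int_s^t\mathbb{E}\big[\,\mathbb{E}[a(r)\mid\mathcal{Y}_r]\mid\mathcal{Y}_s\big]\,dr=\int_s^t\mathbb{E}\big[a(r)\mid\mathcal{Y}_s\big]\,dr.
\]
Subtracting this from the previous identity yields $\mathbb{E}[\Lambda(t)-\Lambda(s)\mid\mathcal{Y}_s]=0$, which is exactly the claimed $\{\mathcal{Y}_t\}$-martingale property. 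The step I expect to require the most care is the repeated interchange of the time integral $\int_s^t$ with the conditional expectation $\mathbb{E}[\,\cdot\mid\mathcal{Y}_s]$ (both the ordinary and the conditional Fubini theorem, which also needs joint measurability in $(r,\omega)$), for which the integrability bounds in Assumption \ref{assumption nonlinear filtering} are precisely what is needed, together with the point that $\hat{x}(t)=\mathbb{E}[x(t)\mid\mathcal{Y}_t]$ holds $\mathbb{P}$-a.s.\ at each fixed time, so that the tower-property manipulations involving the optional projection are legitimate.
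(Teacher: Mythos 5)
Your proposal is correct and follows essentially the same route as the paper: both reduce the martingale identity for $\Lambda$ to the tower property $\mathbb{E}[\hat{x}(t)\mid\mathcal{Y}_s]=\mathbb{E}[x(t)\mid\mathcal{Y}_s]$, a Fubini-type interchange identifying $\int_s^t\mathbb{E}[\hat a(r)\mid\mathcal{Y}_s]\,dr$ with $\mathbb{E}[\int_s^t a(r)\,dr\mid\mathcal{Y}_s]$, and conditioning the stochastic-integral increment first on $\mathcal{F}_s$ to kill it. Your treatment of adaptedness, c\`adl\`ag paths, and square-integrability is just a more explicit version of what the paper states briefly, so no substantive difference remains.
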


\begin{proof}
	The square-integrability of $\Lambda$ follows from the square-integrability of $x$. For fixed $0\leqslant s<t\leqslant T$, we have
	\begin{equation*}
		\begin{aligned}
			&\mathbb{E}\big[\Lambda(t)-\Lambda(s)\big\vert\mathcal{Y}_s\big]
			=\mathbb{E}\big[x(t)-x(s)\big\vert\mathcal{Y}_s\big]-\int_s^t\mathbb{E}\big[\hat{a}(r)\big\vert\mathcal{Y}_s\big]dr\\
			&=\mathbb{E}\big[x(t)-x(s)\big\vert\mathcal{Y}_s\big]-\mathbb{E}\Big[\int_s^ta(r)dr\big\vert\mathcal{Y}_s\Big]\\
			&=\mathbb{E}\left[\sum_{i=1}^2\int_s^t\sigma_i(r)dW_i(r)+\sum_{i=1}^{2}\int_{s}^{t}\int_{E_i}\varpi_i(r,e)\tilde{N}_i(de,dr)\bigg\vert\mathcal{Y}_s\right]\\
			&=\mathbb{E}\left[\mathbb{E}\left[\sum_{i=1}^2\int_s^t\sigma_i(r)dW_i(r)+\sum_{i=1}^2\int_s^t\int_{E_i}\varpi_i(r,e)\tilde{N}_i(de,dr)\bigg\vert\mathcal{F}_s\right]\bigg\vert\mathcal{Y}_s\right]=0.
		\end{aligned}
	\end{equation*}
This shows that $\Lambda$ is a $\mathcal{Y}_t$-martingale.
\end{proof}		
								
It should be noted that for the general non-linear filtering problem, we can only obtain $\sigma\{V(s);0\leqslant s\leqslant t\}\subset\mathcal{Y}_t$. There remains uncertainty regarding whether the process $\Lambda$ is adapted to the smaller filtration generated by the innovation process $V$, as well as $\check{V}^c$ and $\check{V}^d$. Consequently, we are unable to directly apply the martingale representation theorem to deduce that $\Lambda$ can be expressed as a stochastic integral with respect to $\check{V}^c$ and $\check{V}^d$. Fortunately, thanks to Theorem 204 and Lemma 209 of Situ \cite{Situ05}, there exist a square-integrable, $\{\mathcal{Y}_t\}$-predictable process $k_1\equiv\{k_1(t)\in\mathbb{R}^n;0\leqslant t\leqslant T\}$ and a square-integrable, $\{\mathcal{Y}_t\}$-predictable process $k_2\equiv\{k_2(t,e)\in\mathbb{R}^n;0\leqslant t\leqslant T,e\in E\}$ such that
\begin{equation}\label{Situ Rong}
	\Lambda(t)=\int_0^tk_1(s)d\check{V}^c(s)+\int_0^t\int_{E_1}k_2(s,e)\tilde{N}_1(de,ds).
\end{equation}
We are going to determine $k_1$ and $k_2$.

\begin{proposition}
	There exist $\mathbb{R}^n$- and $\mathbb{R}^n$-valued, square-integrable, $\{\mathcal{Y}_t\}$-predictable processes $k_1\equiv\{k_1(t);0\leqslant t\leqslant T\}$ and $k_2\equiv\{k_2(t,e);0\leqslant t\leqslant T,e\in E\}$, respectively, such that \eqref{Situ Rong} holds. Further, $k_1$ and $k_2$ are given by
\begin{equation}\label{k1 and k2}
	\left\{
	\begin{aligned}
		&k_1(t)=\mathbb{E}\big[\tilde{x}(t)\tilde{H}(t)^\top\big\vert\mathcal{Y}_t\big]K^{-1}(t)^\top+\mathbb{E}[\sigma_1(t)\vert\mathcal{Y}_t],\\
		&k_2(t,e)=\mathbb{E}[\varpi_1(t,e)\vert\mathcal{Y}_t].
	\end{aligned}
	\right.
\end{equation}
\end{proposition}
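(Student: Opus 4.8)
The existence of processes $k_1$ and $k_2$ satisfying \eqref{Situ Rong} has already been recalled above via Theorem 204 and Lemma 209 of \cite{Situ05}; what remains is to identify them with \eqref{k1 and k2}. The plan is to test the representation \eqref{Situ Rong} against an arbitrary square-integrable $\{\mathcal{Y}_t\}$-martingale of the form $N(t)\coloneqq\int_0^t\beta_1(s)d\check{V}^c(s)+\int_0^t\int_{E_1}\beta_2(s,e)\tilde{N}_1(de,ds)$, where $\beta_1\equiv\{\beta_1(t);0\leqslant t\leqslant T\}$ and $\beta_2\equiv\{\beta_2(t,e);0\leqslant t\leqslant T,e\in E_1\}$ are arbitrary bounded $\{\mathcal{Y}_t\}$-predictable processes, and to compute $\mathbb{E}[\Lambda(t)N(t)]$ in two different ways. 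Here $N$ is indeed a square-integrable $\{\mathcal{Y}_t\}$-martingale with $N(0)=0$: $\check{V}^c$ is a standard $\{\mathcal{Y}_t\}$-Brownian motion by Lemma \ref{lemma check V 3.1}, and $\tilde{N}_1$ is a $\{\mathcal{Y}_t\}$-martingale measure with compensator $\nu_1(de)dt$, the jumps of $N_1$ being recoverable from those of $Y$ since $c$ is invertible. On the one hand, using the representation \eqref{Situ Rong} for $\Lambda$, the fact that $\Lambda$ is a square-integrable $\{\mathcal{Y}_t\}$-martingale, and the orthogonality of the continuous part $\int_0^\cdot k_1\,d\check{V}^c$ and the purely discontinuous part $\int_0^\cdot\int_{E_1}k_2\,\tilde{N}_1$ of $\Lambda$ (and likewise for the two parts of $N$), I obtain
$$\mathbb{E}[\Lambda(t)N(t)]=\mathbb{E}[\langle\Lambda,N\rangle_t]=\mathbb{E}\bigg[\int_0^t k_1(s)\beta_1(s)\,ds+\int_0^t\int_{E_1}k_2(s,e)\beta_2(s,e)\,\nu_1(de)\,ds\bigg].$$

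On the other hand, starting from $\Lambda(t)=\hat{x}(t)-\hat{x}_0-\int_0^t\hat{a}(s)ds$ and using that $N$ is a $\{\mathcal{Y}_t\}$-martingale with $N(0)=0$ together with the tower property, so that $\mathbb{E}[\hat{x}(t)N(t)]=\mathbb{E}[x(t)N(t)]$, $\mathbb{E}[\hat{x}_0N(t)]=0$, and $\mathbb{E}[\int_0^t\hat{a}(s)ds\,N(t)]=\int_0^t\mathbb{E}[a(s)N(s)]ds$, I find $\mathbb{E}[\Lambda(t)N(t)]=\mathbb{E}[x(t)N(t)]-\int_0^t\mathbb{E}[a(s)N(s)]ds$. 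Applying the It\^o product rule to $x(t)N(t)$, using the dynamics of $x$ in \eqref{SDEP appendix} and $d\check{V}^c(t)=K^{-1}(t)\tilde{H}(t)dt+dW_1(t)$, and then taking expectations — the $dW_1$, $dW_2$, $\tilde{N}_1$, $\tilde{N}_2$ integrals vanish, the $N_1(de,dt)$ term is replaced by its compensator $\nu_1(de)dt$, and the $N(s)a(s)$ term cancels against $\int_0^t\mathbb{E}[a(s)N(s)]ds$ — leaves
$$\mathbb{E}[\Lambda(t)N(t)]=\int_0^t\mathbb{E}\bigg[\Big(x(s)\tilde{H}(s)^\top K^{-1}(s)^\top+\sigma_1(s)\Big)\beta_1(s)+\int_{E_1}\varpi_1(s,e)\beta_2(s,e)\,\nu_1(de)\bigg]ds.$$
Conditioning on $\mathcal{Y}_s$ inside this display and using that $\beta_1(s)$ and $\beta_2(s,\cdot)$ are $\mathcal{Y}_s$-measurable, $K^{-1}$ is deterministic, and $\mathbb{E}[\tilde{H}(s)\vert\mathcal{Y}_s]=0$ (hence $\mathbb{E}[x(s)\tilde{H}(s)^\top\vert\mathcal{Y}_s]=\mathbb{E}[\tilde{x}(s)\tilde{H}(s)^\top\vert\mathcal{Y}_s]$, since $\hat{x}(s)$ is $\mathcal{Y}_s$-measurable), the right-hand side becomes $\int_0^t\mathbb{E}\big[\big(\mathbb{E}[\tilde{x}(s)\tilde{H}(s)^\top\vert\mathcal{Y}_s]K^{-1}(s)^\top+\mathbb{E}[\sigma_1(s)\vert\mathcal{Y}_s]\big)\beta_1(s)+\int_{E_1}\mathbb{E}[\varpi_1(s,e)\vert\mathcal{Y}_s]\beta_2(s,e)\,\nu_1(de)\big]ds$.

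Equating the two expressions for every $t\in[0,T]$ and every bounded $\{\mathcal{Y}_t\}$-predictable $\beta_1,\beta_2$ — first choosing $\beta_2\equiv0$ and letting $\beta_1$ vary, then $\beta_1\equiv0$ and letting $\beta_2$ vary — forces $k_1(t)=\mathbb{E}[\tilde{x}(t)\tilde{H}(t)^\top\vert\mathcal{Y}_t]K^{-1}(t)^\top+\mathbb{E}[\sigma_1(t)\vert\mathcal{Y}_t]$ for $dt\otimes d\mathbb{P}$-a.e.\ $(t,\omega)$ and $k_2(t,e)=\mathbb{E}[\varpi_1(t,e)\vert\mathcal{Y}_t]$ for $\nu_1(de)\otimes dt\otimes d\mathbb{P}$-a.e.\ $(e,t,\omega)$, which is \eqref{k1 and k2}. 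I expect the main obstacle to be the martingale bookkeeping in the product-rule step: verifying the orthogonality of the continuous and purely discontinuous components (so that $\langle\Lambda,N\rangle$ splits as stated and no mixed term appears), keeping track of the compensator of $N_1$ and of the finite-variation drift $K^{-1}\tilde{H}\,dt$ hidden in $d\check{V}^c$, together with the measurability care needed — optional versus predictable projections, and interchanging conditional expectation with the $\nu_1$-integration over $e$ — to pass from the displayed identities to the pointwise formulas.
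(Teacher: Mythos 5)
Your proposal is correct and follows essentially the same route as the paper: the paper also takes the existence from Situ's representation results and then identifies $k_1,k_2$ by testing $\Lambda$ against an arbitrary $\{\mathcal{Y}_t\}$-martingale $\Upsilon(t)=\int_0^t\varrho_1\,d\check{V}^c+\int_0^t\int_{E_1}\varrho_2\,\tilde{N}_1(de,ds)$, computing $\mathbb{E}[\Lambda(t)\Upsilon(t)^\top]$ once via the representation and once via the dynamics of $x$ and $\Upsilon$, using the martingale property to replace $\hat a,\hat x$ by $a,x$, and exploiting $\mathbb{E}[\tilde H(s)\mid\mathcal{Y}_s]=0$ before invoking the arbitrariness of the test integrands. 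Your martingale/duality bookkeeping (orthogonality of the continuous and purely discontinuous parts, compensation of $N_1$, conditioning on $\mathcal{Y}_s$) matches the paper's computation step for step.
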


\begin{proof}
	The existence belongs to Theorem 204 and Lemma 209 of \cite{Situ05}. Next, we are going to determine $k_1$ and $k_2$.
	
	Let $\varrho_1\equiv\{\varrho_1(t);0\leqslant t\leqslant T\}$ and $\varrho_2\equiv\{\varrho_2(t,e);0\leqslant t\leqslant T,e\in E\}$ be fixed but arbitrary $\mathbb{R}^n$- and $\mathbb{R}^n$-valued, square-integrable, $\{\mathcal{Y}_t\}$-predictable processes, respectively. Define the $\{\mathcal{Y}_t\}$-martingale
	\begin{equation*}
		\Upsilon(t)\coloneqq\int_0^t\varrho_1(s)d\check{V}^c(s)+\int_0^t\int_{E_1}\varrho_2(s,e)\tilde{N}_1(de,ds).
	\end{equation*}
By using It\^o's formula, we have, for all $t\in[0,T]$,
\begin{equation}\label{equation Lambda Upsilon 1}
	\mathbb{E}\big[\Lambda(t)\Upsilon(t)^\top\big]=\mathbb{E}\left[\int_0^tk_1(s)\varrho_1(s)^\top ds+\int_0^t\int_{E_1}k_2(s,e)\varrho_2(s,e)^\top\nu_1(de)ds\right].
\end{equation}
On the other hand, noting \eqref{equation Lambda}, we have
\begin{equation*}
	\mathbb{E}\big[\Lambda(t)\Upsilon(t)^\top\big]=\mathbb{E}\big[\hat{x}(t)\Upsilon(t)^\top\big]-\int_{0}^{t}\mathbb{E}\big[\hat{a}(s)\Upsilon(t)^\top\big]ds.
\end{equation*}
It follows from the martingale property of $\Upsilon\equiv\{\Upsilon(t), 0\leqslant t\leqslant T\}$ that, for $0\leqslant s\leqslant t\leqslant T$,
\begin{equation*}
	\begin{aligned}
		&\mathbb{E}\big[\hat{a}(s)\Upsilon(t)\top\big]=\mathbb{E}\big[\mathbb{E}\big[\hat{a}(s)\Upsilon(t)^\top\big]\big]=\mathbb{E}\big[\hat{a}(s)\mathbb{E}\big[\Upsilon(t)^\top\big\vert\mathcal{Y}_s\big]\big]\\
		&=\mathbb{E}\big[\hat{a}(s)\Upsilon(s)^\top\big]=\mathbb{E}\big[\mathbb{E}\big[a(s)\big\vert\mathcal{Y}_s\big]\Upsilon(s)^\top\big]=\mathbb{E}\big[a(s)\Upsilon(s)^\top\big].
	\end{aligned}
\end{equation*}
It follows that
\begin{equation}\label{equation Lambda Upsilon}
	\mathbb{E}\big[\Lambda(t)\Upsilon(t)^\top\big]=\mathbb{E}\big[x(t)\Upsilon(t)^\top\big]-\mathbb{E}\left[\int_0^ta(s)\Upsilon(s)^\top ds\right].
\end{equation}
Note that
\begin{equation*}
	\begin{aligned}
		d\Upsilon(t)&=\varrho_1(t)d\check{V}^c(t)+\int_{E_1}\varrho_2(t,e)\tilde{N}_1(de,dt)\\
		&=\varrho_1(t)K^{-1}(t)\tilde{H}(t)dt+\varrho_1(t)dW_1(t)+\int_{E_1}\varrho_2(t,e)\tilde{N}_1(de,dt).
	\end{aligned}
\end{equation*}
Thus, integration by parts yields
\begin{equation}\label{equation x Upsilon}
	\begin{aligned}
		\mathbb{E}\big[x(t)\Upsilon(t)^\top\big]&=\mathbb{E}\left[\int_0^ta(s)\Upsilon(s)^\top ds\right]+\mathbb{E}\left[\int_0^tx(s)\tilde{H}(s)^\top K^{-1}(s)^\top\varrho_1(s)^\top ds\right]\\
		&\quad +\mathbb{E}\left[\int_0^t\sigma_1(s)\varrho_1(s)^\top ds\right]+\mathbb{E}\left[\int_0^t\int_{E_1}\varpi_1(s,e)\varrho_2(s,e)^\top\nu_2(de)ds\right].
	\end{aligned}
\end{equation}
Noting that $\varrho_1,\varrho_2$ are $\mathcal{Y}_s$-adapted, $\tilde{H}(s)$ is independent of $\mathcal{Y}_s$ and $\mathbb{E}[\tilde{H}(s)]=0$, we can deduce
\begin{equation*}
	\begin{aligned}
		&\mathbb{E}\left[\int_0^tx(s)\tilde{H}(s)^\top K^{-1}(s)^\top\varrho_1(s)^\top ds\right]\\
		&=\mathbb{E}\left[\int_0^t\hat{x}(s)\tilde{H}(s)^\top K^{-1}(s)^\top\varrho_1(s)^\top ds\right]+\mathbb{E}\left[\int_0^t\tilde{x}(s)\tilde{H}(s)^\top K^{-1}(s)^\top\varrho_1(s)^\top ds\right]\\
        &=\mathbb{E}\left[\int_0^t\hat{x}(s)\mathbb{E}\big[\tilde{H}(s)^\top\big\vert\mathcal{Y}_s\big]K^{-1}(s)^\top\varrho_1(s)^\top ds\right] +\mathbb{E}\left[\int_0^t\mathbb{E}\big[\tilde{x}(s)\tilde{H}(s)^\top\big\vert\mathcal{Y}_s\big]K^{-1}(s)^\top\varrho_1(s)^\top ds\right]\\
		&=\mathbb{E}\left[\int_0^t\mathbb{E}\big[\tilde{x}(s)\tilde{H}(s)^\top\big\vert\mathcal{Y}_s\big]K^{-1}(s)^\top\varrho_1(s)^\top ds\right].
	\end{aligned}
\end{equation*}
Hence, we can draw from \eqref{equation x Upsilon} that
\begin{equation*}
	\begin{aligned}
        \mathbb{E}\big[x(t)\Upsilon(t)^\top\big]&=\mathbb{E}\left[\int_0^ta(s)\Upsilon(s)^\top ds\right] +\mathbb{E}\left[\int_0^t\mathbb{E}\big[\tilde{x}(s)\tilde{H}(s)^\top\big\vert\mathcal{Y}_s\big]K^{-1}(s)^\top\varrho_1(s)^\top ds\right]\\
		&\quad +\mathbb{E}\left[\int_0^t\sigma_1(s)\varrho_1(s)^\top ds\right]+\mathbb{E}\left[\int_0^t\int_{E_1}\varpi_1(s,e)\varrho_2(s,e)^\top\nu_2(de)ds\right],
	\end{aligned}
\end{equation*}
which, combined with \eqref{equation Lambda Upsilon}, gives
\begin{equation}\label{equation Lambda Upsilon 3}
	\begin{aligned}
		\mathbb{E}\big[\Lambda(t)\Upsilon(t)^\top\big]&=\mathbb{E}\left[\int_0^t\mathbb{E}\big[\tilde{x}(s)\tilde{H}(s)^\top\big\vert\mathcal{Y}_s\big]K^{-1}(s)^\top\varrho_1(s)^\top ds\right]\\
		&\quad +\mathbb{E}\left[\int_0^t\sigma_1(s)\varrho_1(s)^\top ds\right]+\mathbb{E}\left[\int_0^t\int_{E_1}\varpi_1(s,e)\varrho_2(s,e)^\top\nu_2(de)ds\right].
	\end{aligned}
\end{equation}
Comparing \eqref{equation Lambda Upsilon 1} and \eqref{equation Lambda Upsilon 3} and noting that $\varrho_1$ and $\varrho_2$ are arbitrary, we deduce that
\begin{equation*}
	\left\{
	\begin{aligned}
		&k_1(t)=\mathbb{E}\big[\tilde{x}(t)\tilde{H}(t)^\top\big\vert\mathcal{Y}_t\big]K^{-1}(t)^\top+\mathbb{E}\big[\sigma_1(t)\big\vert\mathcal{Y}_t\big],\\
		&k_2(t,e)=\mathbb{E}\big[\varpi_1(t,e)\big\vert\mathcal{Y}_t\big].
	\end{aligned}
	\right.
\end{equation*}
The conclusion now follows.
\end{proof}

The filtering equation of $\hat{x}$ follows immediately.

\begin{lemma}\label{lemma nonlinear filtering equation}
	Let Assumption \ref{assumption nonlinear filtering} hold. The optimal filter $\hat{x}(t)=\mathbb{E}\big[x(t)\big\vert\mathcal{Y}_t\big]$ satisfies the following non-linear filtering equation:
	\begin{equation}\label{equation non-linear filtering equation}
		\left\{
		\begin{aligned}
			d\hat{x}(t)&=\hat{a}(t)dt+\left\{\mathbb{E}\big[\tilde{x}(t)\tilde{H}(t)^\top\big\vert\mathcal{Y}_t\big]K^{-1}(t)^\top+\hat{\sigma}_1(t)\right\}d\check{V}^c(t)\\
                      &\quad +\int_{E_1}\hat{\varpi}_1(t,e)\tilde{N}_1(de,dt),\\
			\hat{x}(0)&=x_0.
		\end{aligned}
		\right.
	\end{equation}
\end{lemma}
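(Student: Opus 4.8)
The plan is to read off the filtering equation directly from the two ingredients already established: the martingale representation \eqref{Situ Rong} of the process $\Lambda$ defined in \eqref{equation Lambda}, together with the explicit formulas \eqref{k1 and k2} for its integrands $k_1$ and $k_2$. First I would record that, because the initial datum $x_0$ is deterministic and hence $\mathcal{Y}_0$-measurable, one has $\hat{x}_0=\mathbb{E}\big[x_0\big\vert\mathcal{Y}_0\big]=x_0$, which supplies the initial condition in \eqref{equation non-linear filtering equation}.

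Next, rearranging the definition \eqref{equation Lambda} of $\Lambda$ gives, for every $t\in[0,T]$,
\[
\hat{x}(t)=x_0+\int_0^t\hat{a}(s)\,ds+\Lambda(t),\qquad\mathbb{P}\mbox{-a.s.}
\]
Substituting the representation \eqref{Situ Rong} and then the expressions in \eqref{k1 and k2}, and observing that $k_1(s)=\mathbb{E}\big[\tilde{x}(s)\tilde{H}(s)^\top\big\vert\mathcal{Y}_s\big]K^{-1}(s)^\top+\hat{\sigma}_1(s)$ while $k_2(s,e)=\hat{\varpi}_1(s,e)$, one arrives at
\[
\hat{x}(t)=x_0+\int_0^t\hat{a}(s)\,ds+\int_0^t\Big\{\mathbb{E}\big[\tilde{x}(s)\tilde{H}(s)^\top\big\vert\mathcal{Y}_s\big]K^{-1}(s)^\top+\hat{\sigma}_1(s)\Big\}d\check{V}^c(s)+\int_0^t\!\int_{E_1}\hat{\varpi}_1(s,e)\,\tilde{N}_1(de,ds),
\]
which is precisely the integrated form of \eqref{equation non-linear filtering equation}. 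Since $\check{V}^c$ is a standard $\{\mathcal{Y}_t\}$-Brownian motion by Lemma \ref{lemma check V 3.1} and $\tilde{N}_1$ is the compensated jump measure, this is a bona fide $\{\mathcal{Y}_t\}$-semimartingale decomposition, and the square-integrability of $k_1$ and $k_2$ already guarantees that the stochastic integrals are well defined.

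I do not expect a genuine obstacle at this stage, since the analytic heart of the argument has already been carried out: the martingale property of $\Lambda$, the appeal to Theorem 204 and Lemma 209 of \cite{Situ05} to obtain the representation \eqref{Situ Rong} on the a priori larger filtration $\{\mathcal{Y}_t\}$, and the identification of $k_1$ and $k_2$ by testing against arbitrary $\{\mathcal{Y}_t\}$-predictable integrands. The only point that still demands a little care is the bookkeeping of the jump term: one must keep the driver as the \emph{compensated} measure $\tilde{N}_1(de,dt)$ throughout --- it already appears so in \eqref{Situ Rong} --- so that the finite-variation part of $\hat{x}$ remains $\hat{a}(t)\,dt$ and no spurious drift is introduced when passing between $N_1$ and $\tilde{N}_1$.
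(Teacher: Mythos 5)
Your proposal is correct and follows exactly the route the paper intends: the paper states that the filtering equation ``follows immediately'' from the definition \eqref{equation Lambda} of $\Lambda$, the representation \eqref{Situ Rong}, and the identification \eqref{k1 and k2} of $k_1$ and $k_2$, which is precisely the assembly you carry out (including the observation that $\hat{x}(0)=x_0$ since $x_0$ is deterministic). No gaps; you have simply written out the bookkeeping the paper leaves implicit.
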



\begin{thebibliography}{99}

\bibitem{CS19} L. Chen, Y. Shen, Stochastic Stackelberg differential reinsurance games under time-inconsistent mean-variance framework. \emph{Insur. Math. Econom.}, 88, 120-137, 2019.

\bibitem{GLS24} G.H. Guan, Z.X. Liang, and Y.L. Song, A Stackelberg reinsurance-investment game under $\alpha$-maxmin mean-variance criterion and stochastic volatility. \emph{Scand. Actuar. J.}, 2024(1), 28-63, 2024.

\bibitem{HJX23} M.S. Hu, S.L. Ji, and X.L. Xue, Optimization under rational expectations: a framework of fully coupled forward-backward stochastic linear quadratic systems. \emph{Math. Oper. Res.}, 48(3), 1767-1790, 2023.

\bibitem{HS12} J.H. Huang, J.T. Shi, Maximum principle for optimal control of fully coupled forward-backward stochastic differential delayed equations. \emph{ESAIM Control Optim. Calc. Var.}, 18(4), 1073-1096, 2012.

\bibitem{HZ25} Y.-J. Huang, S.H. Zhu, Mean-variance Stackelberg games with asymmetric information. \emph{arXiv:2509.03669v1}

\bibitem{LSZ24} Y.C. Li, Y.Z. Hu, J.T. Shi, and Y.Y. Zheng, A linear-quadratic partially observed Stackelberg stochastic differential game with multiple followers and its application to multi-agent formation control. \emph{arXiv:2412.07159v2}

\bibitem{LY22} D.P. Li, V.R. Young, Stackelberg differential game for reinsurance: Mean-variance framework and random horizon. \emph{Insur. Math. Econom.}, 102, 42-55, 2022.

\bibitem{Lin21} Y.N. Lin, Linear quadratic open-loop Stackelberg game for stochastic systems with Poisson jumps. \emph{J. Frankl. Inst.}, 358(10), 5262-5280, 2021.

\bibitem{LS25} J.T. Lin, J.T. Shi, A risk-sensitive global maximum principle for controlled fully coupled FBSDEs with applications. \emph{Math. Control Relat. Fields}, 15(1), 179-205, 2025.

\bibitem{LS25+} J.T. Lin, J.T. Shi, Global maximum principle for partially observed risk-sensitive progressive optimal control of FBSDE with Poisson jumps. \emph{Stoch. Proc. Appl.}, 195, 104870, 2026.

\bibitem{LL25} Z.X. Liang, X.D. Luo, Stackelberg reinsurance and premium decisions with MV criterion and irreversibility. \emph{SIAM J. Finan. Math.}, 16(1), 167-199, 2025.

\bibitem{LY25} X.Q. Liang, V.R. Young, Two Stackelberg games in life insurance: Mean-variance criterion. \emph{Astin Bull.}, 55(1), 178-203, 2025.

\bibitem{Meng09} Q.X. Meng, A maximum principle for optimal control problem of fully coupled forward-backward stochastic systems with partial information. \emph{Sci. China Math.}, 52(7), 1579-1588, 2009.

\bibitem{Moon21} J. Moon, Linear-quadratic stochastic Stackelberg differential games for jump-diffusion systems. \emph{SIAM J. Control Optim.}, 59(2), 954-976, 2021.

\bibitem{Moon25} J. Moon, Linear-quadratic stochastic Stackelberg differential games with asymmetric information for systems driven by multi-dimensional jump-diffusion processes. \emph{J. Math. Anal. Appl.}, 544(2), 129068, 2025.

\bibitem{MB24} J. Moon, T. Ba\c{s}ar, Separation principle for partially-observed linear-quadratic optimal control for mean-field type stochastic systems. \emph{IEEE Trans. Automat. Control.}, 69(12), 8370-8385, 2024.

\bibitem{MDB19} J. Moon, T.E. Ducan, and T. Ba\c{s}ar, Risk-sensitive zero-sum differential games. \emph{IEEE Trans. Autom. Control}, 64(4), 1503-1518, 2019.

\bibitem{OS09} B. \O ksendal, A. Sulem, Maximum principles for optimal control of forward-backward stochastic differential equations with jumps. \emph{SIAM J. Control Optim.}, 48(5), 2945-2976, 2009.

\bibitem{SWX16} J.T. Shi, G.C. Wang, and J. Xiong, Leader-follower stochastic differential game with asymmetric information and applications. \emph{Automatica}, 63, 60-73, 2016.

\bibitem{SWX20} J.T. Shi, G.C. Wang, and J. Xiong, Stochastic linear quadratic Stackelberg differential game with overlapping information. \emph{ESAIM Control Optim. Calc. Var.}, 26, 83, 2020.

\bibitem{SW10} J.T. Shi, Z. Wu, Maximum principle for forward-backward stochastic control system with random jumps and applications to finance. \emph{J. Syst. Sci. Complex.}, 23(2), 219-231, 2010.

\bibitem{Situ05} R. Situ, \emph{Theory of Stochastic Differential Equations with Jumps and Applications}, Springer-Verlag, New York, 2005.

\bibitem{Stackelberg34} H. von Stackelberg, {\it Marktform Und Gleichgewicht}, Springer, Vinenna, 1934. (Translated by W. Engels, {\it Market Structure and Equilibrium}, Springer, Berlin, 2010.)

\bibitem{SX23} J.R. Sun, J. Xiong, Stochastic linear-quadratic optimal control with partial observation. \emph{SIAM J. Control Optim.}, 61(3), 1231-1247, 2023.

\bibitem{WLZ25} F.D. Wang, Z.B. Liang, and C.B. Zhang, Time-consistent mean-variance risk sharing and reinsurance for an insurance group in a Stackelberg differential game. \emph{J. Ind. Manag. Optim.}, 21(1), 135-166, 2025.

\bibitem{Wonham68} W.M. Wonham, On a matrix Riccati equation of stochastic control. \emph{SIAM J. Control}, 6(4), 681-697, 1968.

\bibitem{WXZ24} F. Wu, J. Xiong, and X. Zhang, Zero-sum stochastic linear-quadratic Stackelberg differential games with jumps. \emph{Appl. Math. Optim.}, 89(1), 29, 2024.

\bibitem{Yong02} J.M. Yong, A leader-follower stochastic linear quadratic differential game. \emph{SIAM J. Control Optim.}, 41(4), 1015-1041, 2002.

\bibitem{ZS22} Y.Y. Zheng, J. T. Shi, A linear-quadratic partially observed Stackelberg stochastic differential game with application. \emph{Appl. Math. Comput.}, 420, 126819, 2022.

\bibitem{ZL00} X.Y. Zhou, D. Li, Continuous-time mean-variance portfolio selection: a stochastic LQ framework. \emph{Appl. Math. Optim.}, 42(1), 19-33, 2000.

\end{thebibliography}
\end{document}